\numberwithin{equation}{section}%
\numberwithin{figure}{section}
\title{Spectrum of Markov generators on sparse random graphs}
\date{Preprint, to appear in Communications on Pure and Applied Mathematics (CPAM, 2014)}%
\subjclass[2000]{05C80 (05C81 15B52 46L54 47A10 60B20)}%
\keywords{Random graphs; Random matrices; Free probability; Combinatorics; Spectral Analysis.}%
\author{Charles Bordenave}%
\address[Ch.~Bordenave]{CNRS \& Institut de Math\'ematiques de Toulouse, Universit\'e de Toulouse, France}
\email{charles.bordenave(at)math.univ-toulouse.fr}%
\urladdr{http://www.math.univ-toulouse.fr/~bordenave/}
\author{Pietro Caputo}%
\address[P.~Caputo]{Dipartimento di Matematica, Universit\`a Roma Tre, Italy}
\email{caputo(at)mat.uniroma3.it}%
\urladdr{http://www.mat.uniroma3.it/users/caputo/}
\author{Djalil Chafa\"\i}%
\address[D.~Chafa{\"{\i}}]{LAMA Universit\'e Paris-Est Marne-la-Vall\'ee \&
  CEREMADE Universi\'e Paris-Dauphine \& Institut Universitaire de France, France}
\email{djalil(at)chafai.net}%
\urladdr{http://djalil.chafai.net/}
\newtheorem{theorem}{Theorem}[section]
\newtheorem{lemma}[theorem]{Lemma}
\newtheorem{corollary}[theorem]{Corollary}
\newtheorem{proposition}[theorem]{Proposition}
\theoremstyle{definition}
\theoremstyle{remark}
\newtheorem{remark}[theorem]{Remark}
\newcommand{\dC}{\mathbb{C}}
\newcommand{\dE}{\mathbb{E}}
\newcommand{\dH}{\mathbb{H}}
\newcommand{\dN}{\mathbb{N}}
\newcommand{\dP}{\mathbb{P}}
\newcommand{\dR}{\mathbb{R}}
\newcommand{\dS}{\mathbb{S}}
\newcommand{\dZ}{\mathbb{Z}}
\newcommand{\cA}{\mathcal{A}}
\newcommand{\cC}{\mathcal{C}}\newcommand{\cD}{\mathcal{D}}
\newcommand{\cF}{\mathcal{F}}
\newcommand{\cL}{\mathcal{L}}
\newcommand{\cM}{\mathcal{M}}\newcommand{\cN}{\mathcal{N}}
\newcommand{\cO}{\mathcal{O}}
\newcommand{\al}{\alpha}
\newcommand{\de}{\delta}
\newcommand{\ga}{\gamma}
\newcommand{\la}{\lambda}
\newcommand{\La}{\Lambda}
\newcommand{\Om}{\Omega}
\newcommand{\Si}{\Sigma}
\newcommand{\si}{\sigma}
\newcommand{\te}{\theta}
\newcommand{\veps}{\varepsilon}
\newcommand{\bx}{{\mathbf{x}}}
\newcommand{\by}{{\mathbf{y}}}
\newcommand{\FLOOR}[1]{{{\lfloor#1\rfloor}}} %
\newcommand{\CEIL}[1]{{{\lceil#1\rceil}}} %
\newcommand{\ABS}[1]{{{\left| #1 \right|}}} 
\newcommand{\BRA}[1]{{{\left\{#1\right\}}}} 
\newcommand{\DOT}[1]{{{\left<#1\right>}}} 
\newcommand{\ANG}[1]{{{\langle#1\rangle}}} 
\newcommand{\NRM}[1]{{{\left\| #1\right\|}}} 
\newcommand{\PAR}[1]{{{\left(#1\right)}}} 
\newcommand{\pd}{{\partial}} 
\newcommand{\SBRA}[1]{{{\left[#1\right]}}} 
\newcommand{\TV}[1]{\NRM{#1}_\textsc{TV}} 
\newcommand{\BV}[1]{\NRM{#1}_\textsc{BV}} 
\newcommand{\RANK}{\mathrm{rank}}
\newcommand{\BIP}{\mathrm{bip}}
\newcommand{\DIM}{\mathrm{dim}}
\newcommand{\SUPP}{\mathrm{supp}}
\newcommand{\DIST}{\mathrm{dist}}
\newcommand{\SPAN}{\mathrm{span}}
\newcommand{\COMP}{\mathrm{Comp}}
\newcommand{\INCOMP}{\mathrm{Incomp}}
\newcommand{\SPARSE}{\mathrm{Sparse}}
\newcommand{\TR}{\mathrm{Tr}}
\newcommand{\DIAG}{\mathrm{diag}}
\newcommand{\VAR}{\mathrm{Var}}
\newcommand{\COV}{\mathrm{Cov}}
\newcommand{\IND}{\mathbf{1}}
\newcommand{\Ul}{\underline}
\newcommand{\Wt}{\widetilde}
\newcommand{\weak}{\rightsquigarrow}
\renewcommand{\Im}{\mathfrak{Im}}
\renewcommand{\Re}{\mathfrak{Re}}
\begin{document}                        

\begin{abstract}
  We investigate the spectrum of the infinitesimal generator of the continuous
  time random walk on a randomly weighted oriented graph. This is the
  non-Hermitian random $n\times n$ matrix $L$ defined by $L_{jk}=X_{jk}$ if
  $k\neq j$ and $L_{jj}=-\sum_{k\neq j}L_{jk}$, where $(X_{jk})_{j\neq k}$ are
  i.i.d.\ random weights. Under mild assumptions on the law of the weights, we
  establish convergence as $n\to\infty$ of the empirical spectral distribution
  of $L$ after centering and rescaling. In particular, our assumptions include
  sparse random graphs such as the oriented Erd\H{o}s-R\'enyi graph where each
  edge is present independently with probability $p(n)\to0$ as long as
  $np(n)\gg (\log(n))^6$. The limiting distribution is characterized as an
  additive Gaussian deformation of the standard circular law. In free
  probability terms, this coincides with the Brown measure of the free sum of
  the circular element and a normal operator with Gaussian spectral measure.
  The density of the limiting distribution is analyzed using a subordination
  formula. Furthermore, we study the convergence of the invariant measure of
  $L$ to the uniform distribution and establish estimates on the extremal
  eigenvalues of $L$.
\end{abstract}

\maketitle   

{\small\tableofcontents}

\section{Introduction}
\label{se:intro}
For each integer $n\geq 1$ let $X=(X_{ij})_{1\leq i,j\leq n}$ be the random
matrix whose entries are i.i.d.\ copies of a complex valued random variable
$\bx$ with variance $\si^2$. The circular law theorem (see e.g.\ the survey
papers \cite{MR2507275,bordenave-chafai-changchun}) asserts that the empirical
spectral distribution of $X$ - after centering and rescaling by $\si\sqrt{n}$
- converges weakly to the uniform distribution on the unit disc of $\dC$. The
sparse regime is obtained by allowing the law of $\bx$ to depend on $n$ with a
variance satisfying $\si^2(n)\to 0$ as $n\to\infty$. As an example, if
$\bx=\epsilon(n)$ is a Bernoulli random variable with parameter $p(n)$, then
$X$ is the adjacency matrix of the oriented Erd\H{o}s-R\'enyi random graph
where each edge is present independently with probability $p(n)$. In this
example $\si^2(n)=p(n)(1-p(n))\sim p(n)$ when $p(n)\to0$. It is expected that
the circular law continues to hold in the sparse regime, as long as
$n\si^2(n)\to\infty$. Results in this direction have been recently established
in \cite{MR2409368,gotze-tikhomirov-new,MR2977992}, where the convergence is
proved under some extra assumptions including that $n^{1-\veps}\si^2\to\infty$
for some $\veps>0$.

In this paper, we consider instead random matrices of the form 
\begin{equation}\label{eq:defL}
L=X-D
\end{equation}
where $X$ is a matrix with i.i.d.\ entries as above, and $D$ is the diagonal
matrix obtained from the row sums of $X$, i.e.\ for $i=1,\dots, n$,
\[
D_{ii}=\sum_{k=1}^nX_{ik}.
\]
If $X$ is interpreted as the adjacency matrix of a weighted oriented graph,
then $L$ is the associated Laplacian matrix, with zero row sums. In
particular, if the weights $X_{ij}$ take values in $[0,\infty)$, then $L$ is
the infinitesimal generator of the continuous time random walk on that graph,
and properties of the spectrum of $L$ can be used to study its long-time
behavior. Clearly, $L$ has non independent entries but independent rows. A
related model is obtained by considering the stochastic matrix $P=D^{-1}X$.
The circular law for the latter model in the non sparse regime has been
studied in \cite{MR2892961}. Here, we investigate the behavior of the spectrum
of $L$ both in the non sparse and the sparse regime. We are mostly concerned
with three issues:
\begin{enumerate}[(1)]
\item convergence of the empirical spectral distribution; 
\item properties of the limiting distribution; 
\item invariant measure and extremal eigenvalues of $L$. 
\end{enumerate}
As in the case of the circular law, the main challenge in establishing point
(1) is the estimate on the smallest singular value of deterministic shifts of
$L$. This is carried out by combining the method of Rudelson and Vershynin
\cite{MR2407948}, and G\"otze and Tikhomirov \cite{gotze-tikhomirov-new},
together with some new arguments needed to handle the non independence of the
entries of $L$ and the possibility of zero singular values - for instance, $L$
itself is not invertible since all its rows sum to zero. As for point (2) the
analysis of the resolvent is combined with free probabilistic arguments to
characterize the limiting distribution as the Brown measure of the free sum of
the circular element and an unbounded normal operator with Gaussian spectral
measure. Further properties of this distribution are obtained by using a
subordination formula. This result can be interpreted as the asymptotic
independence of $X$ and $D$. The Hermitian counterpart of these facts has been
discussed by Bryc, Dembo and Jiang in\ \cite{MR2206341}, who showed that if
$X$ is an i.i.d.\ Hermitian matrix, then the limiting spectral distribution of
$L$ - after centering and rescaling - is the free convolution of the
semi-circle law with a Gaussian law. Finally, in point (3) we estimate
the 
total variation distance between the invariant measure of $L$ and the uniform
distribution. This analysis is based on perturbative arguments similar to
those recently used to analyze bounded rank perturbations of matrices with
i.i.d.\ entries \cite{MR1062321,MR1284550,MR2782201,MR3010398}. Further
perturbative reasoning is used to give bounds on the spectral radius and on
the spectral gap of $L$.

Before stating our main results, we introduce the notation to be used. 
If $A$ is an $n\times n$ 
matrix, we denote by
$\la_1(A),\ldots,\la_n(A)$ its eigenvalues, i.e.\ the roots in $\dC$
of its characteristic polynomial. We label them in such a way that
$|\la_1(A)|\geq\cdots\geq|\la_n(A)|$. 
We denote by $s_1(A),\ldots,s_n(A)$ the singular values of $A$, i.e.\ the
eigenvalues of the Hermitian positive semidefinite matrix $| A |=
\sqrt{A^*A}$, labeled so that $s_1(A)\geq\cdots \geq s_n(A) \geq 0$. The
\emph{operator norm} of $A$ is $\| A \|= s_1(A)$ while the \emph{spectral
  radius} is $|\la_1(A)|$. We define the discrete probability measures
\[
\mu_A=\frac{1}{n}\sum_{k=1}^n\delta_{\la_k(A)}
\quad\text{and}\quad
\nu_A=\mu_{|A|}%
=\frac{1}{n}\sum_{k=1}^n\delta_{s_k(A)}.
\]
We denote ``$\weak$'' the weak convergence of measures against bounded
continuous functions on $\dR$ or on $\dC$. If $\mu$ and ${(\mu_n)}$ are random
finite measures on $\dR$ or on $\dC$, we say that $\mu_n\weak\mu$ \emph{in
  probability} when for every bounded continuous function $f$ and for every
$\veps>0$,
$\lim_{n\to\infty}\dP(\ABS{\int\!f\,d\mu_n-\int\!f\,d\mu}>\veps)=0$. In this
paper, all the random variables are defined on a common probability space
$(\Om,\cA,\dP)$. For each integer $n\geq1$, let $\bx=\bx(n)$ denote a complex
valued random variable with law $\cL=\cL(n)$ possibly dependent on $n$, with
variance
\[
\si^2(n) = \VAR(\bx)
=\dE(|\bx-\dE \bx|^2)
=\VAR(\Re \,\bx)+\VAR(\Im\,\bx),
\]
and mean $m(n)=\dE\bx$.
Throughout the paper, it is always assumed that $\bx$ satisfies
\begin{equation}\label{eq:hyp_sig}
  \lim_{n \to \infty} n\si^2(n)= \infty,\quad \sup_{n}\si^2(n)<\infty,
\end{equation}
and the Lindeberg type condition:
\begin{equation} \label{eq:hyp_lind}
  \text{For all $\veps >0$},\quad %
  \lim_{n \to \infty} \si(n)^{-2}\dE \SBRA{%
    |\bx-m(n)|^2%
    \IND_{\BRA{|\bx-m(n) |^2 \geq \veps n\si^2(n) }} %
  } = 0.
\end{equation}
It is also assumed that the normalized covariance matrix converges:
\begin{equation}\label{covK}
  \lim_{n \to \infty}
  \si(n)^{-2}      
  \COV\PAR{\Re(\bx),\Im(\bx)}
  = K
\end{equation}
for some $2\times 2$ matrix $K$. This allows the real and imaginary parts of
$\bx$ to be correlated. Note that the matrix $K$ has unit trace by
construction.

The two main examples we have in mind are:
\begin{enumerate}[\bfseries A)]
\item non sparse case: $\bx$ has law $\cL$ independent of $n$ with finite
  positive variance;
\item sparse case: 
  \begin{equation}\label{epsy}
    \bx=\epsilon(n)\by, 
  \end{equation}
  with $\by$ a bounded random variable with law independent of $n$ and
  $\epsilon(n)$ an independent Bernoulli variable with parameter $p(n)$
  satisfying $p(n)\to 0$ and $np(n)\to\infty$.
\end{enumerate}
These cases are referred to as model A and model B in the sequel. It is
immediate to check that either case satisfies the assumptions
\eqref{eq:hyp_sig}, \eqref{eq:hyp_lind} and \eqref{covK}. We will sometimes
omit the script $n$ in $\bx(n)$, $\cL(n)$, $m(n)$, $\si(n)$, etc. The
matrix $L$ is defined by \eqref{eq:defL}, and we consider the rescaled matrix
\begin{equation}\label{eq:defM} 
  M %
  =\frac{L+nmI}{\si\sqrt{n}} %
  =\frac{X}{\si\sqrt{n}}-\frac{D-nmI}{\si\sqrt{n}}.
\end{equation}
By the central limit theorem, the distribution of
$(D_{ii}-nm)/(\si\sqrt{n})$ converges to the Gaussian law with mean $0$ and
covariance $K$. Combined with the circular law for $X / (\si \sqrt{n})$, this
suggests the interpretation of the spectral distribution of $M$, in the limit
$n\to\infty$, as an additive Gaussian deformation of the circular law.

Define $\dC_+=\{z\in\dC:\Im(z)>0\}$. If $\nu$ is a probability measure on
$\dR$ then its Cauchy-Stieltjes transform is the analytic function
$S_\nu:\dC_+ \to \dC_+$ given for any $z\in\dC_+$ by
\begin{equation}\label{eq:agnuz}
  S_{\nu}(z)=\int\!\frac{1}{t-z}\,d\nu(t).
\end{equation}
We denote by $\check\nu$ the symmetrization of $\nu$, defined for any Borel
set $A$ of $\dR$ by
\begin{equation}\label{eq:bgnuz}
  \check \nu(A) = \frac{\nu(A)+\nu(-A)}{2}.
\end{equation}
If $\nu$ is supported in $\dR_+$ then $\nu$ is characterized by its
symmetrization $\check\nu$. In the sequel, $G$ is a Gaussian random variable
on $\dR^2\cong\dC$ with law $\cN(0,K)$ i.e.\ mean $0$ and covariance matrix
$K$. This law has a Lebesgue density on $\dR^2$ if and only if $K$ is
invertible, given by
$(2\pi\sqrt{\det(K)})^{-1}\exp(-\frac{1}{2}\DOT{K^{-1}\cdot,\cdot})$.

\subsection{Convergence results}

We begin with the singular values of shifts of the matrix $M$, a useful proxy
to the eigenvalues.

\begin{theorem}[Singular values]\label{th:singvals}
  For every $z\in\dC$, there exists a probability measure $\nu_z$ on $\dR_+$
  which depends only on $z$ and $K$ such that with probability one,
  \[
  \nu_{M-zI} %
  \underset{n\to\infty}{\weak}%
  \nu_z.
  \]
  Moreover, the limiting law $\nu_z$ is characterized as follows: $\check
  \nu_z$ is the unique symmetric probability measure on $\dR$ with
  Cauchy-Stieltjes transform satisfying, for every $\eta\in\dC_+$,
 \begin{equation} \label{gnut}
   S_{\check \nu_z}(\eta) %
   =\dE\PAR{\frac{S_{\check \nu_z}(\eta)+\eta } %
   {|G-z|^2-(\eta+ S_{\check \nu_z}(\eta))^2}}.
 \end{equation} 
\end{theorem}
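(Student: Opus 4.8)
The plan is to Hermitize the shifted matrix and then to identify the limiting spectral measure of the resulting $2n\times 2n$ symmetric matrix by the resolvent (Schur complement) method. Write $B=M-zI$ and
\[
\cH_z=\begin{pmatrix}0&B\\ B^{*}&0\end{pmatrix}.
\]
The eigenvalues of $\cH_z$ are $\pm s_1(B),\dots,\pm s_n(B)$, so $\mu_{\cH_z}=\check\nu_{M-zI}$ and, for $\eta\in\dC_+$, $S_{\check\nu_{M-zI}}(\eta)=\tfrac1{2n}\TR(\cH_z-\eta I)^{-1}$. The rows of $M$ are independent, and altering one of them perturbs $\cH_z$ by a symmetric matrix of rank at most $2$; hence $\tfrac1{2n}\TR(\cH_z-\eta I)^{-1}$ has bounded differences of order $(n\Im(\eta))^{-1}$ in the independent rows, and by the bounded-differences (McDiarmid) inequality together with Borel--Cantelli it converges almost surely to its expectation. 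It is thus enough to prove that $\dE\,\tfrac1{2n}\TR(\cH_z-\eta I)^{-1}$ converges, for $\eta$ in a subset of $\dC_+$ with an accumulation point, to a solution $s(\eta)\in\dC_+$ of \eqref{gnut}; the almost sure statement $\nu_{M-zI}\weak\nu_z$ on $\dR_+$ then follows by Vitali's theorem (the transforms are locally bounded on $\dC_+$), Stieltjes inversion and the symmetry of $\check\nu_z$.

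Before running the resolvent analysis I would reduce $B$ to a decoupled Gaussian model. With $\widehat X=X-m\,\mathbf{1}\mathbf{1}^{*}$, which has i.i.d.\ centred entries, one has
\[
M=\frac{\widehat X}{\si\sqrt n}+\frac{m}{\si\sqrt n}\,\mathbf{1}\mathbf{1}^{*}-\frac{1}{\si\sqrt n}\DIAG(\widehat X\mathbf{1}).
\]
The rank-one term adds a rank-$\le 2$ symmetric perturbation to $\cH_z$ and so does not affect the limit of $\mu_{\cH_z}$ (rank inequality for the Lévy distance). To replace $\tfrac1{\si\sqrt n}\DIAG(\widehat X\mathbf{1})$ by $T=\DIAG(\ga_1,\dots,\ga_n)$ with $(\ga_i)$ i.i.d.\ $\cN(0,K)$ and independent of the off-diagonal randomness, I would condition on the vector of row sums $r=\widehat X\mathbf{1}$: conditionally the rows of $\widehat X$ are independent, the conditional mean shift they acquire forms a further rank-one matrix (again negligible), and the residual within-row correlations are only of order $1/n$. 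Combined with the Lindeberg condition \eqref{eq:hyp_lind}, the convergence \eqref{covK} and a truncation, the central limit theorem then lets one treat $\big(\tfrac{r_i}{\si\sqrt n}\big)_i$ as an i.i.d.\ $\cN(0,K)$ sample, decoupled from the now essentially i.i.d.\ off-diagonal part $Y=\widehat X/(\si\sqrt n)$ of variance $1/n$; after this, $B=Y-\widetilde T$ with $\widetilde T=\DIAG(\ga_i+z)$.

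The core step is the resolvent analysis of $\cH_z$ for the model $B=Y-\widetilde T$. Fix $\eta\in\dC_+$, set $R=(\cH_z-\eta I)^{-1}$ and $s_n=\tfrac1{2n}\TR R$, and let $R^{(i)}$ be the resolvent obtained after deleting the pair of indices $\{i,n+i\}$. Applying the Schur complement formula to this $2\times 2$ block, using that the $i$-th row and column of $Y$ are independent of $R^{(i)}$, and invoking quadratic-form concentration (Hanson--Wright, or a Burkholder martingale bound -- this is where $n\si^2\to\infty$ enters, the entries of $Y$ being of size $1/\sqrt{n\si^2}$) together with the chiral identity $\TR(BB^{*}-\eta^2 I)^{-1}=\TR(B^{*}B-\eta^2 I)^{-1}$, one finds that $R_{\{i,n+i\}}$ equals, up to $o(1)$,
\[
\begin{pmatrix}-\eta-s_n & -(\ga_i+z)\\ -\overline{\ga_i+z} & -\eta-s_n\end{pmatrix}^{-1},
\]
whose diagonal entries are $\dfrac{\eta+s_n}{|\ga_i+z|^{2}-(\eta+s_n)^{2}}$. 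Averaging over $i$, using the law of large numbers for the empirical measure of $(\ga_i)$ (which tends to the law of $G$) and that $G$ is symmetric, yields in the limit
\[
s=\dE\!\left(\frac{\eta+s}{|G-z|^{2}-(\eta+s)^{2}}\right),
\]
which is exactly \eqref{gnut}. Uniqueness of $s\in\dC_+$ for each $\eta\in\dC_+$ follows from the Schwarz--Pick lemma: in the variable $s$ the right-hand side is an analytic self-map of $\dC_+$, and it is not the identity (it tends to $0$ as $s\to\infty$ along $i\dR_+$), so it has at most one interior fixed point. One recognizes \eqref{gnut} as the subordination equation expressing $\check\nu_z$ as the free convolution of the symmetrized law of $|G-z|$ with the standard semicircle law -- the Hermitization counterpart of the free-sum description of the limiting eigenvalue distribution discussed later in the paper, the Hermitian-input version of which is due to Bryc, Dembo and Jiang.

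The delicate point is the decoupling of the second paragraph. Since the diagonal $D$ is built from $X$, the entries of $B=M-zI$ are not independent and the row sums share randomness with $Y$, so replacing $\DIAG(\widehat X\mathbf{1})$ by an independent Gaussian diagonal must be controlled at the level of the resolvent -- one must show that the within-row conditioning corrections and the finite-$n$ non-Gaussianity of the row sums leave no trace on $\tfrac1{2n}\TR(\cH_z-\eta I)^{-1}$ -- and this has to be done in the sparse regime, where the entries of $Y$ are only of order $1/\sqrt{n\si^2}$; that is precisely why the quadratic-form concentration in the Schur step requires $n\si^2\to\infty$ rather than merely $n\to\infty$. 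The small singular values of $B$, and the non-invertibility of $L$ itself, are by contrast harmless here: weak convergence of $\nu_{M-zI}$ is insensitive to a vanishing fraction of small singular values, and they become a genuine obstacle only when this theorem is later fed into the logarithmic-potential argument for the eigenvalues.
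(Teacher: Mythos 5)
Your overall plan (Hermitize, Schur complement on a $2\times2$ block, fixed-point equation for the Stieltjes transform, uniqueness via an analytic self-map of $\dC_+$) is the same skeleton as the paper's proof, and several of your sub-steps are fine or even slightly cleaner: the McDiarmid/bounded-differences argument for the a.s.\ concentration of $\frac1{2n}\TR(\cH_z-\eta I)^{-1}$ is a valid substitute for the concentration lemma the paper cites, and your uniqueness argument via Schwarz--Pick/Denjoy--Wolff is a tidy alternative to the paper's monotonicity-in-$h$ argument (the map $s\mapsto\dE\bigl[(\eta+s)/(|G-z|^2-(\eta+s)^2)\bigr]$ is indeed a self-map of $\dC_+$ and is not an automorphism, so it has at most one interior fixed point).

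The gap is the decoupling in your second paragraph, and it is a real one. You propose to replace $\frac{1}{\si\sqrt n}\DIAG(\widehat X\mathbf 1)$ by an independent Gaussian diagonal $T$ before running the resolvent analysis, arguing that ``the CLT then lets one treat $(r_i/(\si\sqrt n))_i$ as an i.i.d.\ $\cN(0,K)$ sample, decoupled from $Y$.'' The CLT only gives distributional convergence of each $r_i/(\si\sqrt n)$; it gives you no coupling that makes the vector of normalized row sums an exact i.i.d.\ Gaussian sample independent of the off-diagonal matrix, and the difference $M-\widetilde M=\DIAG(\ga_i - r_i/(\si\sqrt n))$ is a full-rank perturbation with $O(1)$ entries, so no rank bound or Hoffman--Wielandt estimate controls it. Conditioning on the row sums does not rescue this without substantial further work: the resulting within-row exchangeable correlations are $O(1/n)$ per pair but there are $O(n)$ pairs per row, and showing that this correlation mass, together with the residual non-Gaussianity of the conditional law, leaves no trace on $\frac1{2n}\TR(\cH_z-\eta I)^{-1}$ would require a Lindeberg-exchange (Tao--Vu universality) type argument that you have not supplied. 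You acknowledge the point is delicate, but as written it remains an unproved reduction, and it is the core difficulty that distinguishes $L=X-D$ from the model with an independent diagonal.

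The paper never performs this decoupling. It runs the Schur complement on the last $2\times 2$ block of the Hermitization of the \emph{coupled} model, where the diagonal entry $M_{nn}=-\frac{1}{\si\sqrt n}\sum_{i<n}X_{ni}$ shares the $n$-th row of $X$ with the vector $Q$ entering the quadratic form $Q^*\widetilde R\,Q$. The way the dependence evaporates is not by replacing $M_{nn}$ with an independent Gaussian, but by showing that (a) $\widetilde R$ can be replaced by a resolvent $R'$ that is measurable with respect to the $(n-1)\times(n-1)$ minor and hence independent of the $n$-th row and column, (b) the quadratic form $Q^*R'Q$ then concentrates around a deterministic multiple of $I_2$, and (c) $M_{nn}$ converges weakly to $G$ by the Lindeberg CLT. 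Since the quadratic form converges in probability to a constant and $M_{nn}$ converges in distribution, their joint limit factorizes automatically, and no a priori decoupling is needed. If you want to pursue your route, you should replace the CLT-based hand-wave with a genuine interpolation argument between the two diagonals, at the level of the resolvent; otherwise, switch to the paper's observation and let the concentration of $Q^*\widetilde R\,Q$ do the work.

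A smaller remark: the paper also performs a truncation/recentering step (constructing $X''$, $L''$, $D''$) before the resolvent analysis and controls the resulting perturbation of the singular values via Hoffman--Wielandt; you should include something analogous, since the quadratic-form concentration in your core step implicitly uses bounded entries of size $o(\si\sqrt n)$, not just finite variance with a Lindeberg tail.
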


The next result concerns the eigenvalues of $M$. On top of our running
assumptions \eqref{eq:hyp_sig}, \eqref{eq:hyp_lind} and \eqref{covK}, here we
need to assume further:
 \begin{enumerate}[(i)]
  \item variance growth:
    \begin{equation} \label{eq:hyp_sig2} 
      \lim_{n \to \infty} \frac{n\si^2(n)}{(\log(n))^6} = +\infty;
    \end{equation}
  \item 
  tightness conditions: 
  \begin{equation} \label{eq:hyp_tightness1} %
    \lim_{t\to\infty} \inf_{n \geq 1}%
    \frac{\dE\SBRA{|\bx|^2\IND_{\{ \frac{1}{t}<|\bx|< t\}}}}%
    {\dE\SBRA{|\bx|^2}} =1 %
    \quad\text{and}\quad %
    \sup_{n \geq 1}\frac{\dE\SBRA{|\bx|^2}}{\si^2(n)}<\infty;
   \end{equation}
 \item the set $\La$ of accumulation points of $(\sqrt{n}\,m(n)/\si(n))_{n
     \geq 1}$ has zero Lebesgue measure in $\dC$.
  \end{enumerate}
  It is not hard to check that assumptions (i),(ii),(iii) are all satisfied by
  model A and model B, provided in B we require that $p(n)\gg (\log(n))^6/n$.

\begin{theorem}[Eigenvalues]\label{th:eigenvals}
  Assume  that (i),(ii),(iii) above hold. Let $\mu$ be the probability
  measure on $\dC$ defined by
  \[
  \mu=-\frac{1}{2\pi}\Delta U
  \quad\text{with}\quad
  U(z):=-\int_0^\infty\!\log(t)\,d\nu_z(t),
  \]
  where the Laplacian $\Delta=\partial_x^2+\partial_y^2$ is taken in the sense
  of Schwartz-Sobolev distributions in the space $\cD'(\dR^2)$, and where
  $\nu_z$ is as in theorem \ref{th:singvals}. Then, in probability,
  \[
  \mu_{M} %
  \underset{n\to\infty}{\weak}%
  \mu.
  \]
  \end{theorem}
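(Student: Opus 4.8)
\emph{Proof strategy.} The plan is to follow Girko's Hermitization method. For any $n\times n$ matrix $A$ and any $z\in\dC$ one has the basic identity
\[
U_{\mu_A}(z):=-\int_\dC\log|z-w|\,d\mu_A(w)=-\frac1n\log|\det(A-zI)|=-\int_0^\infty\log(t)\,d\nu_{A-zI}(t),
\]
so that $\mu_A=-\frac1{2\pi}\Delta U_{\mu_A}$ in $\cD'(\dR^2)$. Thus $U(z)$ in the statement is precisely the would-be limit of $U_{\mu_M}(z)$. By the Hermitization lemma for logarithmic potentials (see e.g.\ \cite{bordenave-chafai-changchun}), in order to prove $\mu_M\weak\mu$ in probability it suffices to establish: (a) for Lebesgue-almost every $z\in\dC$ one has $\nu_{M-zI}\weak\nu_z$ in probability; (b) for almost every $z$, the function $t\mapsto\log(t)$ is uniformly integrable with respect to $\nu_{M-zI}$ in probability, i.e.\ $\sup_n\dP\PAR{\int_{\{t<\delta\}\cup\{t>1/\delta\}}|\log t|\,d\nu_{M-zI}(t)>\veps}\to0$ as $\delta\to0$ for every $\veps>0$; and (c) $U$ is locally integrable on $\dC$, so that $-\frac1{2\pi}\Delta U$ defines a probability measure. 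Point (a) is granted by Theorem \ref{th:singvals} (which gives, for every fixed $z$, almost sure convergence). The substance of the proof lies in (b), and (c) follows from the same estimates.

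\emph{Large singular values and tightness.} The behavior at $+\infty$ in (b), together with (c) near infinity and the tightness of $(\mu_M)$, follows from a second-moment bound. Using the decomposition \eqref{eq:defM},
\[
\int_0^\infty t^2\,d\nu_{M-zI}(t)=\frac1n\TR\big((M-zI)(M-zI)^*\big)\le\frac2{\si^2n^2}\sum_{i,j}|X_{ij}|^2+\frac2{\si^2n^2}\sum_i|D_{ii}-nm|^2+2|z|^2,
\]
and by \eqref{eq:hyp_sig}, \eqref{eq:hyp_tightness1} and $\VAR(D_{ii})=n\si^2$ the right-hand side is $O(1)$ in expectation, hence bounded in probability. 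Since $\log t\le\delta t^2$ for $t$ large, this controls $\int_{\{t>1/\delta\}}\log(t)\,d\nu_{M-zI}$, and since $\frac1n\sum_k|\la_k(M)|^2\le\frac1n\TR(MM^*)$ it gives tightness of $(\mu_M)$; passing to the limit by Fatou, $\int_0^\infty t^2\,d\nu_z(t)<\infty$, whence $-\int_1^\infty\log(t)\,d\nu_z(t)>-\infty$, which together with the small-$t$ control below yields (c).

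\emph{Small singular values.} The delicate half of (b) is the mass near $t=0$: one must bound $\int_{\{t<\delta\}}|\log t|\,d\nu_{M-zI}(t)=\frac1n\sum_{k:\,s_{n-k+1}(M-zI)<\delta}\log\frac1{s_{n-k+1}(M-zI)}$. This rests on two inputs. First, a polynomial lower bound on the smallest singular value: for almost every $z$ (those outside the exceptional null set below) there are constants $B,c>0$ with $\dP(s_n(M-zI)\le n^{-B})\to0$, so that this eigenvalue contributes at most $O(n^{-1}\log n)\to0$; this is the technical core of the paper, obtained by the Rudelson--Vershynin \cite{MR2407948} / G\"otze--Tikhomirov \cite{gotze-tikhomirov-new} scheme (compressible versus incompressible vectors), adapted to the dependence of the diagonal entry of $L$ on the off-diagonal entries of its row and to the genuine degeneracy $L\IND=0$. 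Second, a count of the intermediate small singular values, e.g.\ $s_{n-k+1}(M-zI)\ge c\,k/n$ for $k\ge n/(\log n)^2$, which combined with $\nu_{M-zI}\weak\nu_z$ and the absence of an atom of $\nu_z$ at the origin gives $\limsup_n\int_{\{n^{-B}<t<\delta\}}|\log t|\,d\nu_{M-zI}(t)\le\omega(\delta)$ with $\omega(\delta)\to0$, uniformly in probability.

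\emph{Removing the mean, and the role of (iii).} Writing $X=\WH X+m\IND\IND^*$ with $\WH X$ centered, one checks $M=\WH M+\tfrac{\sqrt n\,m}{\si}\cdot\tfrac{\IND\IND^*}{n}$, where $\WH M$ is the rescaled centered Laplacian and the added term has rank one. The rank inequality for logarithmic potentials shows that $\nu_{M-zI}$ and $\nu_{\WH M-zI}$ share the same weak limit and that the rank-one term can affect the uniform integrability in (b) only on a Lebesgue-null set of $z$ determined by the accumulation points $\La$ of $(\sqrt n\,m/\si)$; assumption (iii) makes that set null, which is all that (a)--(c) require. A preliminary truncation of $\bx$ at a large but fixed level, legitimate by \eqref{eq:hyp_tightness1} and the rank/Hoffman--Wielandt bounds, reduces everything to uniformly bounded entries where the singular-value estimates of the previous paragraph are available. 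The main obstacle is exactly that polynomial lower bound on $s_n(M-zI)$: the non-independence of the entries of $L$ within a row and the structural near-degeneracy near $z=0$ force a substantial reworking of the Rudelson--Vershynin argument, whereas the large-singular-value control, the tightness, the intermediate-singular-value count, and the rank-one reduction are comparatively routine.
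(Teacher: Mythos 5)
Your overall strategy is the one the paper uses (Girko's Hermitization: (a) convergence of $\nu_{M-zI}$, (b) uniform integrability of $\log$, with (b) split into large/moderate/smallest singular values), and your treatment of the large singular values, the tightness, and the role of assumption (iii) is essentially the paper's. However, there is a genuine gap in the small-singular-value step: you assert a \emph{polynomial} lower bound $\dP(s_n(M-zI)\le n^{-B})\to 0$, but under the paper's sparsity hypothesis $n\si^2\gg(\log n)^6$ only the \emph{quasi-polynomial} bound of Proposition \ref{prop:small}, namely $\dP(s_n(M-zI)\ge e^{-(\log n)^2})\to 1$, is established. The polynomial bound $n^{-4}$ is obtained in the paper only under the much stronger condition $\si^2\ge n^{-1/4+\veps}$ (cf.\ \eqref{nonspa1}--\eqref{nonspa2}); for smaller $\si^2$, the Rudelson--Vershynin single-split scheme collapses and one needs the multi-scale partition of the sphere into the sets $C_0,\dots,C_{N+1}$ of \eqref{decompo}, iterated across $N\asymp\log n/\log\log n$ compressibility scales. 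That iteration is what degrades the bound from polynomial to $e^{-(\log n)^2}$ and is the technical core of the proof; it is absent from your outline.

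This matters quantitatively for your choice of cutoff in the moderate range. You propose $s_{n-k+1}(M-zI)\ge ck/n$ for $k\ge n/(\log n)^2$, which is consistent with a polynomial bound on $s_n$ (since then the $O(n/(\log n)^2)$ smallest terms each contribute $O((\log n)^2)$ to $\frac1n\sum_k|\log s_k|^2$, giving $O(1)$). But with the quasi-polynomial bound $s_n\ge e^{-(\log n)^2}$, each such term can contribute $(\log n)^4$, and the total is $(\log n)^2\to\infty$, so uniform integrability fails with your cutoff. The paper's Lemma \ref{prop:Msmall} uses the sharper cutoff $u(n)=n/(\log n)^5$, for which the total contribution is $u(n)(\log n)^4/n=1/\log n\to 0$; and that choice is in turn exactly what forces hypothesis (i), because the distance-to-subspace estimate (Lemma \ref{le:concdist} with $\psi(n)=u(n)/4$) requires $\si^2\psi(n)\gg\log n$, i.e.\ $n\si^2\gg(\log n)^6$. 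So the exponent $6$ in hypothesis (i), the cutoff $(\log n)^5$, and the quasi-polynomial rate $e^{-(\log n)^2}$ are tightly linked; replacing any one of them by the more optimistic version you wrote breaks the accounting. A minor further point: the paper's Hermitization lemma does not require a separate verification of local integrability of $U$; your condition (c) is subsumed.
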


\subsection{Limiting distribution}

The limiting distribution in theorem \ref{th:eigenvals} is independent of the
mean $m$ of the law $\cL$. This is rather natural since shifting the entries
produces a deterministic rank one perturbation. As in other known
circumstances, a rank one additive perturbation produces essentially a single
outlier, and therefore does not affect the limiting spectral distribution, see
e.g.\ \cite{MR1062321,MR1284550,djalil-nccl,MR3010398}. To obtain further
properties of the limiting distribution, we turn to {\em free probability}. 

We refer to \cite{AGZ} and references therein for an introduction to the basic concepts of free probability. 
Recall that a non-commutative probability space is a pair $(\cM, \tau)$ where $\cM$
is a von Neumann algebra and $\tau$ is a normal,
faithful, tracial state on $\cM$. Elements of $\cM$ are bounded linear operators on a Hilbert space. In the present work, we need to deal with
possibly unbounded operators in order to interpret the large $n$ limit of $\si^{-1}
n^{-1/2}(D-nmI)$. To this end, one extends $\cM$ to the so-called {\em affiliated} algebra $\bar \cM
\supset \cM$.
Following Brown \cite{MR866489} and
Haagerup and Schultz \cite{MR2339369}, one can associate to every element $a\in\bar \cM$ a
probability measure $\mu_a$ on $\dC$, called the {\em Brown spectral measure}.
If $a$ is normal, i.e.\ if $a^*\!a=a a^*$, then the Brown measure coincides
with the usual spectral measure of a normal operator on a Hilbert space. The usual notion of $\star$-free operators still makes
sense in $(\bar\cM,\tau)$ even if the elements of $\bar \cM$ are not
necessarily bounded. We refer to section \ref{subsec:Brown} below for precise
definitions in our setting and to \cite{MR2339369} for a complete treatment.
We use the standard notation $|a|=\sqrt{a^*\!a}$ for the square root of the non
negative self-adjoint operator $a^*\!a$.

\begin{theorem}[Free probability interpretation of limiting
  laws]\label{th:naturemu}
  Let $c$ and $g$ be $\star$-free operators in $(\bar \cM, \tau)$, with $c$
  circular, and $g$ normal operator\footnote{Normal means $gg^*=g^*g$, a property which
    has nothing to do with Gaussianity. However, and coincidentally, it turns
    out that the spectral measure of $g$ is additionally assumed Gaussian
    later on!} with spectral measure equal to $\cN(0,K)$. Then, if $\nu_z$ and
  $\mu$ are as in theorems \ref{th:singvals}-\ref{th:eigenvals}, we have
  \[
  \nu_z = \mu_{|c + g - z|} \quad \text{and} \quad \mu = \mu_{c+g}.
  \] 
\end{theorem}

Having identified the limit law $\mu$, we obtain some additional information
on it.

\begin{theorem}[Properties of the limiting measure]\label{th:propmu}
  Let $c$ and $g$ be as in theorem \ref{th:naturemu}. The support of the Brown
  measure $\mu_{c+g}$ of $c + g$ is given by
  \[
  \SUPP (\mu_{c+g}) %
  = \BRA{z \in \dC : \dE\PAR{\frac{1}{|G-z|^2}} \geq 1}.
  \]
  There exists a unique function $f : \SUPP (\mu_{c+g}) \to [0,1]$ such that
  for all $z\in \SUPP (\mu_{c+g})$,
  \[
  \dE\SBRA{\frac{1}{|G-z|^2 + f(z)^2}}=1.
  \]
  Moreover, $f$ is $C^\infty$ in the interior of $\SUPP (\mu_{c+g})$, and
  letting $\Phi(w,z):=(|w-z|^2 + f(z)^2)^{-2}$, the probability measure
  $\mu_{c+g}$ is absolutely continuous with density given by
  \begin{equation}\label{eq:formuladensity}
    z\mapsto
    \frac{1}{\pi} f(z)^2 \dE\SBRA{\Phi(G,z)} %
    + \frac{1}{\pi}
    \frac{\ABS{\dE\SBRA{(G-z)\Phi(G,z)}}^2}{\dE\SBRA{\Phi(G,z)}}.
  \end{equation}
\end{theorem}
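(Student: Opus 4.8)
The plan is to stay within the free-probabilistic picture of Theorem~\ref{th:naturemu} and to recover the Brown measure $\mu_{c+g}=\mu$ from its logarithmic potential: writing $\widetilde U(z):=\int_0^\infty\log(t)\,d\nu_z(t)=-U(z)$, Theorems~\ref{th:eigenvals}--\ref{th:naturemu} give $\mu=\tfrac{1}{2\pi}\Delta\widetilde U=\tfrac{2}{\pi}\,\partial_z\partial_{\bar z}\widetilde U$ in $\cD'(\dR^2)$, with $\nu_z=\mu_{|c+g-z|}$ and $\check\nu_z$ characterized by \eqref{gnut}. The first step is to specialize \eqref{gnut} to the imaginary axis. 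Evaluating at $\eta=i\epsilon$ and using that $\check\nu_z$ is symmetric, write $S_{\check\nu_z}(i\epsilon)=i\,s_z(\epsilon)$ with $s_z(\epsilon)>0$; setting $u:=s_z(\epsilon)+\epsilon$, equation \eqref{gnut} collapses to
\[
s_z(\epsilon)=u\,\dE\Big[\frac{1}{|G-z|^2+u^2}\Big]
\qquad\Longleftrightarrow\qquad
\epsilon=u\Big(1-\dE\big[(|G-z|^2+u^2)^{-1}\big]\Big).
\]
The right-hand side of the second form is strictly increasing in $u$ (its $u$-derivative equals $\epsilon/u+2u^2\,\dE[(|G-z|^2+u^2)^{-2}]>0$) and tends to $+\infty$, so $u=u_z(\epsilon)$ is a well-defined increasing function with $u_z(\epsilon)\to\infty$ as $\epsilon\to\infty$ and a limit $u_z(0^+)=:f(z)$. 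From the equation at $\epsilon=0$ one reads off that $f(z)=0$ if $\dE[1/|G-z|^2]\le1$, and that $f(z)$ is otherwise the unique positive root of $\dE[(|G-z|^2+f^2)^{-1}]=1$; uniqueness is strict monotonicity of $t\mapsto\dE[(|G-z|^2+t^2)^{-1}]$, and $f(z)\le1$ follows from $1=\dE[(|G-z|^2+f^2)^{-1}]\le f^{-2}$. This establishes the existence, uniqueness and range statements for $f$; smoothness of $f$ on the interior of the support follows from the implicit function theorem applied to $\Psi(z,\bar z,f):=\dE[(|G-z|^2+f^2)^{-1}]-1$, whose $f$-derivative $-2f\,\dE[(|G-z|^2+f^2)^{-2}]$ is nonzero for $f>0$ and which is $C^\infty$ jointly when $f$ stays bounded away from $0$ (the integrand and all its derivatives being dominated by powers of $f^{-1}$ times polynomials in $|G|$).

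Next I would express $\widetilde U$ in terms of $s_z$. Differentiating $\psi(\epsilon):=\int_{\dR}\log|t-i\epsilon|\,d\check\nu_z(t)$ gives $\psi'(\epsilon)=\int\frac{\epsilon}{t^2+\epsilon^2}\,d\check\nu_z(t)=s_z(\epsilon)$; since $\psi(\epsilon)-\log\epsilon\to0$ as $\epsilon\to\infty$ — which uses finiteness of the logarithmic moment of $\nu_z$, itself a consequence of $|c+g-z|\le\|c-z\|+|g|$ and of all moments of $G$ being finite — integration yields $\widetilde U(z)=\int_0^\infty(\tfrac{\epsilon}{1+\epsilon^2}-s_z(\epsilon))\,d\epsilon$. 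The heart of the proof is then the computation of $\partial_z\partial_{\bar z}\widetilde U$, where a change of variable from $\epsilon$ to $u$ triggers a decisive cancellation. Implicit differentiation of the Step~1 equation in $z$ gives $\partial_z s_z(\epsilon)=\overline{Q_z(\epsilon)}/P_z(\epsilon)$ with $P_z(\epsilon)=\epsilon/u^2+2u\,\dE[(|G-z|^2+u^2)^{-2}]$ and $Q_z(\epsilon)=\dE[(G-z)(|G-z|^2+u^2)^{-2}]$, $u=u_z(\epsilon)$; and from Step~1, $d\epsilon=u\,P_z(\epsilon)\,du$. Hence the denominator cancels, and after Fubini and $\int_a^\infty u(D+u^2)^{-2}\,du=\tfrac{1}{2(D+a^2)}$,
\[
\partial_z\widetilde U(z)
=-\int_0^\infty\partial_z s_z(\epsilon)\,d\epsilon
=-\int_{f(z)}^\infty u\,\dE\big[(\overline{G-z})(|G-z|^2+u^2)^{-2}\big]\,du
=-\frac{1}{2}\,\dE\Big[\frac{\overline{G-z}}{|G-z|^2+f(z)^2}\Big].
\]
Applying $\partial_{\bar z}$, and using $\dE[(|G-z|^2+f^2)^{-1}]=1$, the identity $\dE[|G-z|^2\Phi]=1-f^2\dE[\Phi]$ with $\Phi=\Phi(G,z)$, and $\partial_{\bar z}f=\dE[(G-z)\Phi]/(2f\,\dE[\Phi])$ (from differentiating the defining relation for $f$), one arrives at exactly \eqref{eq:formuladensity} at every $z$ in the interior of the support.

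For the support statement: when $z$ belongs to the affine support of $\cN(0,K)$, the singularity of $|G-z|^{-2}$ is non-integrable (in $\dR^2$ if $K$ is invertible, in $\dR$ if $\RANK(K)=1$), so $\dE[1/|G-z|^2]=+\infty$, $f(z)>0$, and \eqref{eq:formuladensity} gives a strictly positive density there (its first term is $\tfrac{1}{\pi}f(z)^2\dE[\Phi]>0$), whence $z\in\SUPP(\mu)$; when instead $\dE[1/|G-z|^2]<1$ — which forces $K$ degenerate and $z$ off the support line — one has $f(z)=0$ and $\partial_z\widetilde U(z)=-\tfrac{1}{2}\dE[1/(G-z)]$ is holomorphic in $z$ (a Cauchy integral over a curve avoiding $z$), so $\mu$ vanishes near $z$. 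Since $z\mapsto\dE[1/|G-z|^2]$ is real-analytic and non-constant where finite, it has no thick level set, and therefore $\SUPP(\mu)=\{z\in\dC:\dE[1/|G-z|^2]\ge1\}$. The main obstacle is the computation in the second paragraph: justifying the interchange of $\partial_z$ with $\int_0^\infty d\epsilon$ and the ensuing Fubini \emph{uniformly} in $z$, especially near the boundary of the support where $f(z)\to0$ and the integrand $u\,\dE[(\overline{G-z})(|G-z|^2+u^2)^{-2}]$ remains integrable only through angular cancellation inside the Gaussian average; a secondary point is to confirm that the unbounded-operator Brown theory of Haagerup and Schultz legitimately identifies $\widetilde U$ with the logarithmic potential of $\mu_{c+g}$ despite $g$ being unbounded. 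The cancellation $d\epsilon=u\,P_z(\epsilon)\,du$, which turns the intractable $\epsilon$-integral into the elementary $u$-integral above, is what makes the closed form attainable.
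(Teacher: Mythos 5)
Your proposal is correct in substance and takes a genuinely different route from the paper. The paper works through the quaternionic resolvent: Proposition~\ref{prop:subordination} produces the fixed-point equation \eqref{eq:fixpointgamma} for the full $2\times2$ transform $\Gamma_{c+g}(q)$, the off-diagonal entry $\beta(q(z,it))$ is extracted and its boundary value $\beta(z)=-\dE\,G_z/(|G_z|^2+f(z)^2)$ identified, and then Lemma~\ref{le:propRes} delivers $\mu_{c+g}=-\tfrac1\pi\,\pd\beta(z)$ directly, so only one $\pd$-derivative and no $\epsilon$-integration is needed. You instead stay entirely with the scalar Cauchy--Stieltjes transform $S_{\check\nu_z}$: you reconstruct the logarithmic potential as $\widetilde U(z)=\int_0^\infty\big(\tfrac{\epsilon}{1+\epsilon^2}-s_z(\epsilon)\big)\,d\epsilon$ (justified by $\psi'(\epsilon)=s_z(\epsilon)$ and the $\log\epsilon$ asymptotic), differentiate in $z$, and then make the change of variable $\epsilon\mapsto u$ governed by the fixed-point equation. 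The cancellation $d\epsilon=uP_z\,du$ against the $P_z$ in the denominator of $\partial_z s_z(\epsilon)=\overline{Q_z}/P_z$ is the crux, and it indeed reproduces $\partial_z\widetilde U(z)=-\tfrac12\dE\big[\overline{G-z}/(|G-z|^2+f(z)^2)\big]=-\tfrac12\overline{\beta(z)}$, after which your $\partial_{\bar z}$ computation (using the implicit-differentiation formula for $\partial_{\bar z}f$) is the same algebra as the paper's and yields \eqref{eq:formuladensity}. What the paper's route buys is economy: the quaternionic transform makes $\beta$ available without integrating over the spectral parameter, so the analytic issues you flag -- differentiation under $\int_0^\infty d\epsilon$ and Fubini uniformly as $f(z)\to0$ near the boundary of the support -- simply do not arise. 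What your route buys is self-containedness: it uses only the scalar object $S_{\check\nu_z}$ already characterized in Theorem~\ref{th:singvals} and the definition $\mu=\tfrac1{2\pi}\Delta\widetilde U$, and avoids invoking Lemma~\ref{le:propRes} and the $2\times2$ formalism. Your treatment of the support and of $f$ (existence, uniqueness, range $[0,1]$, $C^\infty$ in the interior via the implicit function theorem) matches the paper's argument closely.

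Two small points worth tightening if you were to write this out in full. First, when you assert that $g(u)=u(1-\dE[(|G-z|^2+u^2)^{-1}])$ is strictly increasing via $g'(u)=\epsilon/u+2u^2\dE[(\cdot)^{-2}]$, this identity only holds after substituting $\epsilon=g(u)$; the cleaner statement is that $g'(u)=h(u)+2u^2\dE[(\cdot)^{-2}]$ with $h(u)=1-\dE[(\cdot)^{-1}]$ increasing and $h(f(z))=0$, so $g'>0$ on $[f(z),\infty)$. Second, your Fubini step $\int_{f(z)}^\infty u\,\dE[\,\overline{G-z}\,(\cdot)^{-2}]\,du=\dE\big[\overline{G-z}\int_{f(z)}^\infty u(\cdot)^{-2}\,du\big]$ is absolutely convergent precisely because $f(z)>0$ in the interior ($\dE\big[|G-z|/(|G-z|^2+f^2)\big]\leq 1/(2f)$), so this is only justified strictly inside $\SUPP(\mu_{c+g})$, which is all you need -- but that restriction should be stated explicitly, since on the boundary (where $f=0$) the bound degenerates exactly where you flag the difficulty.
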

It can be seen that $\mu$ is rotationally invariant when $K$ is a multiple of
the identity, while this is not the case if $\SUPP(\cL)\subset\dR$, in which
case $K_{22}=K_{12}=K_{21}=0$ (in this case $G$ does not have a density on
$\dC$ since $K$ is not invertible). Figure \ref{fi:simus} 
provides numerical simulations illustrating this phenomenon in two special
cases. Note also that the support of $\mu_{c+g}$ is not bounded since it
contains the support of $\cN(0,K)$. Thus, $\SUPP (\mu_{c+g})=\dC$ if $K$ is invertible. If $K$ is not invertible, it can be checked that the boundary
of $\SUPP (\mu_{c + g})$ is
\[
\BRA{z \in \dC : \dE\PAR{\frac{1}{|G-z|^2}} = 1}
\]
On this set, $f(z) = 0$, but from \eqref{eq:formuladensity}, we see that the
density does not vanish there. This phenomenon, not unusual for Brown
measures, occurs for the circular law and more generally for $R$-diagonal
operators, see Haagerup and Larsen \cite{MR1784419}.

The formula \eqref{eq:formuladensity} is slightly more explicit than the
formulas given in Biane and Lehner \cite[Section 5]{MR1876844}. Equation
\eqref{eq:formuladensity} will be obtained via a subordination formula for the
circular element (forthcoming proposition \ref{prop:subordination}) in the
spirit of the works of Biane \cite{MR1488333} or Voiculescu \cite{MR1744647}.
This subordination formula can also be used to compute more general Brown
measures of the form $\mu_{a + c}$ with $a, c$ $\star$-free, $c$ circular and $a$ normal.

\subsection{Extremal eigenvalues and the invariant measure}

Theorem \ref{th:eigenvals} suggests that the bulk of the spectrum of $L$ is
concentrated around the value $-mn$ in a two dimensional window of width $\si
\sqrt{n}$. Actually, it is possible to localize more precisely the support of
the spectrum, by controlling the extremal eigenvalues of $L$. Recall that $L$
has always the trivial eigenvalue $0$. Theorem \ref{th:support} below
describes the positions of the remaining eigenvalues. For simplicity, we
restrict our analysis to the Markovian case in which $\SUPP(\cL)\subset
[0,\infty)$ and to either model A or B. Analogous statements hold however in the general case.
Note that we have here $m>0$ and $K_{11}=1$ while $K_{22}=K_{12}=K_{21}=0$ (in
particular, $K$ is not invertible). We define for convenience the centered
random matrices
\begin{equation}\label{bars}
\Ul X = X-\dE X,
\quad 
\Ul D=D-\dE D, 
\quad
\Ul L=L-\dE L=\Ul X-\Ul D.
\end{equation}
If $J$ stands for the $n\times n$ matrix with all entries equal to $1$, then
we have
\[
\dE L = L-\Ul L= mJ - mnI.
\]

\begin{theorem}[Spectral support for model A]\label{th:support}
Assume model A, that $\SUPP(\cL)\subset\dR_+$ and that $\dE| \bx|^4 < \infty$. Then
  with probability one, for $n \gg1$, every eigenvalue $ \Ul \la$ of $\Ul
  L$ satisfies
  \begin{equation}\label{radius1}
    |\Re \Ul \la| \leq \si \sqrt{2 n\log(n)}\,(1+o(1)) %
    \quad\text{and}\quad %
    |\Im \Ul \la| \leq \si \sqrt{n}(2+o(1)) .
  \end{equation} 
  Moreover, with probability one, for $n \gg 1$, every eigenvalue $\la\neq
  0$ of $L$ satisfies
  \begin{equation}\label{radius2}
    |\Re  \la + m  n| \leq \si  \sqrt{2 n \log(n)}\,(1+o(1)) %
    \quad\text{and}\quad 
    |\Im \la| \leq \si \sqrt{n}(2+o(1)).
  \end{equation} 
\end{theorem}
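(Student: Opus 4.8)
The plan is to estimate separately the real and imaginary parts of the eigenvalues of $\Ul L = \Ul X - \Ul D$, exploiting the fact that $\Ul D$ is a diagonal matrix whose entries are centered sums of i.i.d.\ bounded random variables, while $\Ul X$ is a centered i.i.d.\ matrix. First I would recall the elementary spectral inclusion: for any $n\times n$ matrix $A$ and any eigenvalue $\la$ of $A$, one has $\Re\la \in [\la_n(\mathrm{Re}\,A),\la_1(\mathrm{Re}\,A)]$ and $\Im\la \in [\la_n(\mathrm{Im}\,A),\la_1(\mathrm{Im}\,A)]$, where $\mathrm{Re}\,A = \tfrac12(A+A^*)$ and $\mathrm{Im}\,A = \tfrac1{2i}(A-A^*)$ are the Hermitian and skew-Hermitian parts. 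Applying this to $A = \Ul L$, we get $|\Re\Ul\la| \le \|\tfrac12(\Ul L + \Ul L^*)\|$ and $|\Im\Ul\la| \le \|\tfrac1{2i}(\Ul L - \Ul L^*)\|$, so the whole problem reduces to operator-norm bounds on these two Hermitian matrices.

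Next I would decompose. Write $\Ul L = \Ul X - \Ul D$. For the skew-Hermitian part, $\tfrac1{2i}(\Ul L - \Ul L^*) = \tfrac1{2i}(\Ul X - \Ul X^*)$ since $\Ul D$ is real diagonal, hence self-adjoint and killed by antisymmetrization; so $|\Im\Ul\la| \le \|\tfrac1{2i}(\Ul X-\Ul X^*)\| \le \|\Ul X\|$, and the bound $\|\Ul X\| \le 2\si\sqrt{n}(1+o(1))$ almost surely is the classical Bai--Yin theorem (valid under the fourth moment assumption $\dE|\bx|^4<\infty$, model A), giving the second inequality in \eqref{radius1}. For the Hermitian part, $\tfrac12(\Ul L + \Ul L^*) = \tfrac12(\Ul X + \Ul X^*) - \Ul D$. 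Here $\|\tfrac12(\Ul X + \Ul X^*)\| \le \|\Ul X\| = O(\sqrt n)$, which is of lower order than the target $\sqrt{2n\log n}$, so the dominant contribution is $\|\Ul D\| = \max_{1\le i\le n} |\Ul D_{ii}| = \max_i |\sum_{k=1}^n (X_{ik} - m)|$. Each $\Ul D_{ii}$ is a sum of $n$ i.i.d.\ centered bounded random variables with variance $\si^2$; by a Gaussian tail bound (Bernstein or Hoeffding, using boundedness in model A) combined with a union bound over $i=1,\dots,n$ and the Borel--Cantelli lemma, $\max_i |\Ul D_{ii}| \le \si\sqrt{2n\log n}\,(1+o(1))$ almost surely. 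Combining, $|\Re\Ul\la| \le \si\sqrt{2n\log n}\,(1+o(1)) + O(\si\sqrt n) = \si\sqrt{2n\log n}\,(1+o(1))$, which is the first inequality in \eqref{radius1}.

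Finally, \eqref{radius2} follows from \eqref{radius1} by the rank-one perturbation argument. We have $L = \Ul L + \dE L = \Ul L + mJ - mnI$. The matrix $mJ - mnI$ has eigenvalue $0$ (eigenvector the all-ones vector, since $J$ has rank one with eigenvalue $n$) and eigenvalue $-mn$ with multiplicity $n-1$. Writing $L + mnI = \Ul L + mJ$, this is $\Ul L$ plus a rank-one matrix. A nonzero eigenvalue $\la$ of $L$ corresponds to an eigenvalue $\la + mn$ of $L + mnI = \Ul L + mJ$. To transfer the bulk bound, I would argue that $\Ul L$ and $\Ul L + mJ$ have interlacing (or near-interlacing) Hermitian and skew-Hermitian parts: indeed $\tfrac12(mJ + mJ^*) = mJ$ is a rank-one positive semidefinite perturbation of norm $mn$, and $\tfrac1{2i}(mJ - mJ^*) = 0$, so the skew-Hermitian part of $L + mnI$ equals that of $\Ul L$, giving immediately $|\Im\la| \le \si\sqrt n(2+o(1))$. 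For the real part, a rank-one perturbation shifts at most one eigenvalue of the Hermitian part out of the bulk (Weyl/interlacing inequalities for $\tfrac12(\Ul L+\Ul L^*) + mJ$), and one checks that the single escaping eigenvalue is the one associated with the eigenvalue $0$ of $L$, which is excluded by hypothesis; all remaining eigenvalues $\la \ne 0$ of $L$ satisfy $|\Re\la + mn| \le \si\sqrt{2n\log n}\,(1+o(1))$.

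The main obstacle is the last step: making the rank-one transfer rigorous for the real part, since the interlacing inequalities control the ordered eigenvalues of the Hermitian part $\tfrac12(L+L^*+2mnI)$ but one must correctly identify that the exceptional (escaping) eigenvalue corresponds precisely to the trivial eigenvalue $0$ of $L$ rather than to some other eigenvalue, and that the correspondence between eigenvalues of the non-normal matrix $L$ and those of its Hermitian part is only an inclusion, not a bijection. I expect one resolves this by working with the resolvent of $L$ shifted near the bulk boundary and a direct perturbative computation showing $\det(L - \la I) = \det(\Ul L - (\la + mn)I)\bigl(1 + m\,\mathbf{1}^*(\Ul L - (\la+mn)I)^{-1}\mathbf{1}\bigr)$ via the matrix determinant lemma, then locating the zeros of the scalar correction factor and verifying they stay near $0$ (equivalently near $-mn$ after the shift) — this requires a lower bound on the smallest singular value of $\Ul L - wI$ for $w$ in the relevant region, of the type already needed for Theorem \ref{th:eigenvals}, or alternatively the cruder Hermitization bounds above suffice once the $0$ eigenvalue is factored out explicitly.
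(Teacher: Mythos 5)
Your reduction of \eqref{radius1} to operator-norm bounds on the Hermitian and skew-Hermitian parts (Bendixson--Hirsch) is a valid alternative to the paper's route, which instead applies the Bauer--Fike theorem directly to the decomposition $\Ul L = -\Ul D + \Ul X$ with the normal matrix $-\Ul D$ as reference: both reduce to $\|\Ul X\| = 2\si\sqrt n(1+o(1))$ and $\max_i|\Ul D_{ii}| = \si\sqrt{2n\log n}(1+o(1))$. However, your treatment of $\max_i|\Ul D_{ii}|$ has a genuine gap: you invoke Hoeffding/Bernstein ``using boundedness in model A'', but model A does \emph{not} assume bounded entries -- it only assumes $\cL$ is $n$-independent with finite variance, and here additionally $\dE|\bx|^4 < \infty$. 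The paper instead quotes a precise Gaussian-approximation result of Bryc, Dembo and Jiang (their theorem 1.5, stated here as lemma \ref{le:maxDii}), which is exactly what yields $\max_i|\Ul D_{ii}| = \si\sqrt{2n\log n}(1+o(1))$ a.s.\ under the fourth-moment hypothesis; a naive sub-Gaussian concentration bound neither applies as stated nor, even with truncation, delivers the sharp constant $\sqrt 2$ without a moderate-deviations CLT refinement. You should either cite the BDJ lemma or supply a truncation plus normal-approximation argument.

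For \eqref{radius2} your plan diverges more substantially from the paper, and you yourself flag that it is incomplete. The difficulty you identify is real: interlacing controls the ordered eigenvalues of $\tfrac12\bigl((L+mnI)+(L+mnI)^*\bigr)$, but Bendixson only gives an inclusion for the eigenvalues of the non-normal $L$, so you cannot identify ``the one escaping eigenvalue of the Hermitian part'' with the trivial eigenvalue $0$ of $L$; moreover the determinant-lemma route would require a smallest-singular-value lower bound for $\Ul L - wI$ over the relevant region, which is not available here at the needed precision. The paper's actual argument avoids all of this: it applies Bauer--Fike once more with $L = (-\Ul D + mJ - mnI) + \Ul X$, where the deterministic part $-\Ul D + mJ - mnI$ is real symmetric (hence normal), so the eigenvalues of $L$ lie in $\bigcup_j B(\zeta_j, s_1(\Ul X))$ with $\zeta_j$ real. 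Weyl's inequality against the known spectrum $\{0, -mn, \ldots, -mn\}$ of $mJ - mnI$ shows $|\zeta_1| = O(\max_i|\Ul D_{ii}|)$ and $\max_{j\ge 2}|\zeta_j + mn| = O(\max_i|\Ul D_{ii}|)$, after suitable relabeling. Under the harmless reduction \eqref{eq:hyp_sig3}, the ball around $\zeta_1$ is eventually disjoint from those around $\zeta_2, \ldots, \zeta_n$, and a Gershgorin-type continuity argument (continuously deforming the perturbation) then places exactly one eigenvalue -- necessarily $\la = 0$ -- in the first ball and all others in the remaining balls near $-mn$. This cleanly yields \eqref{radius2} with no need for resolvent or singular-value estimates, and is the step you would need to replace or complete in your proposal.
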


To interpret the above result, recall that Yin and Bai \cite[theorem
2]{MR950344} prove that, in model A, if $\dE |\bx |^4 < \infty$ then the
operator norm of $\Ul X$ is $\si \sqrt{n}\,(2+o(1))$. On the other hand,
from the central limit theorem one expects that the operator norm and the
spectral radius of the diagonal (thus normal) matrix $\Ul D$ are of order
$\si \sqrt{2n\log(n)}\,(1+o(1))$ (as for maximum of i.i.d.\ Gaussian random
variables). Note that if one defines a \emph{spectral gap} $\kappa$ of the
Markov generator $L$ as the minimum of $|\Re \la|$ for $\la\neq 0$
in the spectrum of $L$, then by theorem \ref{th:support} one has a.s.\
\begin{equation}\label{sgap}
\kappa\geq mn - \si \sqrt{2n\log(n)}\,(1+o(1)).
\end{equation}

In theorem \ref{th:support}, we have restricted our attention to model A to be
in position to use Yin and Bai \cite{MR950344}. Beyond model A, their proof
cannot be extended to laws which do not satisfy the assumption $\dE(|\bx-m|^4)
= \cO( \si^4)$. The latter will typically not hold when $\si(n)$ goes to $0$.
For example, in model B, one has $\si^2(n) \sim p(n) \dE |\by|^2$ and
$\dE(|\bx-m|^4) \sim p(n)\dE |\by|^4$. In this situation, we have the
following result.

\begin{theorem}[Spectral support for model B]\label{th:support2}
Assume model B,  
with ${\bf y}$ a non-negative bounded variable, and that
  \begin{equation} \label{eq:hyp_sig4} \lim_{n \to \infty}
    \frac{n\si^2}{\log(n)} = +\infty;
  \end{equation}
  Then with probability one, for $n \gg1$, every eigenvalue $ \Ul \la$ of
  $\Ul L$ satisfies
  \begin{align}\label{radius12}
    & |\Re \Ul \la| %
    \leq (2+o(1))\si\sqrt{n\log(n)} %
    + \cO\PAR{\si^{\frac{1}{2}}n^{\frac{1}{4}}\log(n)} \\
    & \quad \quad\text{and}\quad\quad |\Im \Ul \la| %
    \leq (2 + o(1)) \si \sqrt{n} %
    + \cO\PAR{\si^{\frac{1}{2}} n^{\frac{1}{4}}\log(n)}\nonumber.
 \end{align} 
 Moreover, with probability one, for $n \gg 1$, every eigenvalue $\la\neq
 0$ of $L$ satisfies
 \begin{align}\label{radius22}
   &   | \Re \la + m  n|  %
   \leq(2+o(1))\si\sqrt{n\log(n)} %
   +\cO\PAR{\si^{\frac{1}{2}}n^{\frac{1}{4}}\log(n)} \\
   &\quad \quad\text{and}\quad\quad  %
   |\Im  \la|  %
   \leq (2+o(1))\si\sqrt{n} %
   +\cO\PAR{\si^{\frac{1}{2}}n^{\frac{1}{4}}\log(n)}\nonumber.
 \end{align} 
\end{theorem}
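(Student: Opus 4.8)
The plan is to bound $\|\Ul L\|$ and the spectral radius $|\la_1(\Ul L)|$, or rather to bound separately the real and imaginary parts of the eigenvalues, by writing $\Ul L = \Ul X - \Ul D$ and controlling each term. The matrix $\Ul D$ is diagonal with i.i.d.\ centered entries $\Ul D_{ii} = \sum_k \Ul X_{ik}$, each a sum of $n$ i.i.d.\ bounded centered variables with variance $\si^2$; in model B the summands are not $\cO(\si^2)$-bounded in fourth moment, so rather than the Yin--Bai route we use a direct large-deviation (Bernstein/Bennett) estimate. Since each $\Ul X_{ik}$ is bounded by a constant $c$ (bounded diameter of $\SUPP(\cL)$), Bernstein's inequality gives $\dP(|\Ul D_{ii}| \geq t) \leq 2\exp(-t^2/(2n\si^2 + \tfrac23 c t))$; choosing $t = (2+o(1))\si\sqrt{n\log n}$ and adding a correction term $\cO(c\log n)$ to absorb the linear term in the exponent, a union bound over $i=1,\dots,n$ together with Borel--Cantelli shows that a.s.\ for $n\gg 1$, $\max_i |\Ul D_{ii}| \leq (2+o(1))\si\sqrt{n\log n} + \cO(\log n)$. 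This is the spectral radius and operator norm of the normal matrix $\Ul D$.

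Next I would bound $\|\Ul X\|$. Here the entries are bounded by $c$ and have variance $\si^2$, with $n\si^2/\log n \to\infty$. The relevant tool is a bound on the operator norm of a random matrix with independent bounded entries: a standard $\veps$-net argument combined with Bernstein's inequality for the quadratic form $\langle \Ul X u, v\rangle$ over a net of the unit sphere yields $\|\Ul X\| \leq (2+o(1))\si\sqrt n + \cO(\si^{1/2}n^{1/4}\log n)$ under \eqref{eq:hyp_sig4}; the extra term is precisely what the net-discretization plus the boundedness of entries forces, and it is negligible compared to $\si\sqrt n$ exactly when $n\si^2 \gg \log n$ but would dominate if $\si$ were too small. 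Alternatively one may invoke an existing sparse-matrix norm bound from the literature on Erd\H{o}s--R\'enyi adjacency matrices; either way the point is that $\|\Ul X\| = (2+o(1))\si\sqrt n$ up to the stated lower-order correction.

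Now combine. For any eigenvalue $\Ul\la$ of $\Ul L = \Ul X - \Ul D$ with unit eigenvector $u$, write $\Ul\la = \langle \Ul X u, u\rangle - \langle \Ul D u, u\rangle$. The second term is real (since $\Ul D$ is real diagonal, hence Hermitian) and bounded in absolute value by $\max_i|\Ul D_{ii}|$; the first term has $|\Re\langle\Ul X u,u\rangle| \leq \|\tfrac12(\Ul X + \Ul X^*)\| \leq \|\Ul X\|$ and likewise $|\Im\langle \Ul X u,u\rangle| \leq \|\tfrac1{2i}(\Ul X - \Ul X^*)\| \leq \|\Ul X\|$. Hence $|\Im\Ul\la| \leq \|\Ul X\| \leq (2+o(1))\si\sqrt n + \cO(\si^{1/2}n^{1/4}\log n)$, which is the second line of \eqref{radius12}. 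For the real part, $|\Re\Ul\la| \leq |\Re\langle \Ul X u,u\rangle| + \max_i|\Ul D_{ii}|$; the diagonal contributes $(2+o(1))\si\sqrt{n\log n}$ which dominates the $\cO(\si\sqrt n)$ from $\Ul X$, absorbing it into the $(2+o(1))$ prefactor, and the correction terms combine into $\cO(\si^{1/2}n^{1/4}\log n)$ (note $\cO(\log n)$ from the $\Ul D$ estimate is smaller). This gives the first line of \eqref{radius12}. Finally, for \eqref{radius22} recall $L = \Ul L + \dE L = \Ul L + mJ - mnI$; since $mJ$ has rank one and, on the orthogonal complement of the all-ones vector $\IND$, $L$ acts as $\Ul L - mnI$ up to a rank-one perturbation, one shows that every eigenvalue $\la\neq 0$ of $L$ satisfies $\la + mn = \Ul\la + o(\si\sqrt n)$ for some eigenvalue $\Ul\la$ of $\Ul L$. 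Concretely, eigenvalues of $L$ other than $0$ are within $o(\si\sqrt n)$ (in fact $\cO(1)$) of eigenvalues of $\Ul L - mnI$ by a standard rank-one perturbation bound on the characteristic polynomial, e.g.\ via the identity $\det(L-zI) = \det(\Ul L - mnI - zI)\,(1 + m\IND^\top(\Ul L-mnI-zI)^{-1}\IND)$ valid off the spectrum; a perturbative argument as in \cite{tao-outliers} then confines all but one of the roots to the $o(\si\sqrt n)$-neighbourhood of the spectrum of $\Ul L - mnI$, and the remaining root is the trivial $0$. Shifting by $mn$ converts \eqref{radius12} into \eqref{radius22}.

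The main obstacle is the operator-norm bound on $\Ul X$ in the genuinely sparse regime: the Yin--Bai fourth-moment method fails since $\dE(|\bx - m|^4)\sim p\,\dE|\by|^4$ is not $\cO(\si^4)$, so one needs the $\veps$-net plus Bernstein (or a truncation-and-moment) argument carefully calibrated to produce the sharp constant $2$ together with the admissible error $\cO(\si^{1/2}n^{1/4}\log n)$, and to check that Borel--Cantelli applies (summability of the failure probabilities) precisely under \eqref{eq:hyp_sig4}. The second delicate point is the passage from $\Ul L$ to $L$: one must verify that the rank-one deformation $mJ$ really does not create an outlier beyond the trivial eigenvalue $0$ that would violate \eqref{radius22}, which requires controlling the resolvent $\IND^\top(\Ul L - mnI - zI)^{-1}\IND$ on the relevant region — here one uses that $\IND$ is a near-eigenvector ($\Ul L\IND$ has mean zero and small norm is \emph{not} automatic, so instead one notes $L\IND = 0$ exactly and works with the exact factorization above).
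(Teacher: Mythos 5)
The overall decomposition ($\Ul L=\Ul X-\Ul D$, Bennett/Bernstein for $\max_i|\Ul D_{ii}|$, a norm bound on $\Ul X$, then combine) matches the paper, and your numerical-range trick for splitting real and imaginary parts (write $\Ul\la=\ANG{\Ul X u,u}-\ANG{\Ul D u,u}$ with $u$ the unit eigenvector, note the second term is real and bounded by $\max_i|\Ul D_{ii}|$, and bound the first by $\|\Ul X\|$) is a clean and valid alternative to the paper's route, which instead applies the Bauer--Fike theorem with $-\Ul D$ as the normal part to place all eigenvalues of $\Ul L$ in $\bigcup_i B(-\Ul D_{ii},s_1(\Ul X))$. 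Both give the same bound; the paper's Bauer--Fike framing is what it then reuses to pass to $L$.

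There are, however, two genuine gaps. First, your primary route for $\|\Ul X\|$ — an $\veps$-net plus Bernstein over a net of the unit sphere — does \emph{not} yield the sharp leading constant $(2+o(1))\si\sqrt{n}$. The net/concentration argument produces a bound of the form $C\si\sqrt{n}$ with some constant $C$ strictly larger than $2$ (coming from the net entropy), and for the imaginary-part inequality $|\Im\Ul\la|\leq(2+o(1))\si\sqrt{n}+\cO(\si^{1/2}n^{1/4}\log n)$ the constant $2$ is essential and is not absorbed by the larger $\si\sqrt{n\log n}$ term. Obtaining $2$ requires a moment/trace method adapted to the sparse regime; the paper invokes Vu's theorem \cite[theorem~1.4]{MR2384414} (extending F\"uredi--Koml\'os), which is exactly where the correction term $\cO(\si^{1/2}n^{1/4}\log n)$ comes from. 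Your parenthetical ``alternatively one may invoke an existing sparse-matrix norm bound'' is the step that actually carries the proof; the $\veps$-net sketch should be dropped.

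Second, for \eqref{radius22} you propose the determinant factorization $\det(L-z)=\det(\Ul L-mnI-z)\PAR{1+m\phi^*(\Ul L-mnI-z)^{-1}\phi}$ together with a Tao-type outlier argument to show that nonzero eigenvalues of $L$ shift from eigenvalues of $\Ul L-mnI$ by $o(\si\sqrt n)$. This is plausible but substantially more work than the paper does and, as stated, is not a proof: you need to control the resolvent quantity $\phi^*(\Ul L-mnI-z)^{-1}\phi$ on the relevant annulus and rule out additional outliers, which is exactly the delicate point. The paper avoids this entirely by applying Bauer--Fike once more to $L=(-\Ul D+mJ-mnI)+\Ul X$, noting that $-\Ul D+mJ-mnI$ is real symmetric (hence normal) with eigenvalues $\zeta_1,\dots,\zeta_n$ satisfying $|\zeta_1|=\cO(\max_i|\Ul D_{ii}|)$ and $|\zeta_j+mn|=\cO(\max_i|\Ul D_{ii}|)$ for $j\geq2$; then a Gershgorin-type continuity argument, enabled by the disjointness of $B(\zeta_1,s_1(\Ul X))$ from the other balls (which holds since $mn\gg\si\sqrt{n\log n}$ in model B), isolates $\la=0$ in the first ball and confines every other eigenvalue to $\bigcup_{j\geq2}B(\zeta_j,s_1(\Ul X))$. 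That gives \eqref{radius22} directly and with no outlier analysis. You should adopt that route, or at minimum fill in the resolvent control your version requires.
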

Note that $\si^{\frac{1}{2}} n^{\frac{1}{4}}\log(n)=o(\si\sqrt{n\log(n)})$
whenever \eqref{eq:hyp_sig4} is strenghtened to $n\si^2 \gg (\log(n))^2$. The term
$\si^{\frac{1}{2}}n^{\frac{1}{4}}\log(n)$ comes in our proof from an estimate
of Vu \cite{MR2384414} on the norm of sparse matrices with independent bounded
entries.

We turn to the properties of the invariant measure of $L$. If
$\SUPP(\cL)\subset\dR_+$ and $L$ is irreducible, then from the
Perron-Frobenius theorem, the kernel of $L$ has dimension $1$ and there is a
unique vector $\Pi\in(0,1)^n$ such that $L^\top\Pi = 0$ and $\sum_{i=1}^n\Pi_i
=1$. The vector $\Pi$ is the invariant measure of the Markov process with
infinitesimal generator $L$.

\begin{theorem}[Invariant measure]\label{th:invmeas} 
  Assume that either the assumptions of theorems \ref{th:support} or \ref{th:support2} hold.   Then, a.s.\ for $n \gg 1$, the Markov generator $L$ is irreducible and
  \[
  \TV{\Pi - U_n} %
  = \cO\PAR{\frac{\si}{m}\sqrt{\frac{\log(n)}{n}}} %
  + \cO\PAR{\frac{\sqrt{\si}}{m}\frac{\log(n)}{n^{3/4}}},
  \]
  where $U_n = \frac1n(1 ,\ldots,1)^\top$ is the uniform probability
  distribution on the finite set $\{1, \ldots , n\}$ and $\TV{Q} :=
  \frac{1}{2}\sum_{i=1}^n |Q_i|$ is the total variation norm.
\end{theorem}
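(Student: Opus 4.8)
The plan is to treat $L$ as a perturbation of its mean $\dE L=m(J-nI)$, whose right-invariant probability measure is exactly $U_n$, and to linearise the equation $L^{\top}\Pi=0$ around $U_n$. We first dispose of irreducibility. Since $L_{ij}=X_{ij}$ for $i\neq j$, the matrix $L$ is irreducible iff the random digraph with an arc $i\to j$ whenever $X_{ij}\neq0$ is strongly connected. As $\SUPP(\cL)\subset\dR_+$ and $\si^2>0$ one has $m=\dE\bx>0$, hence $q:=\dP(\bx\neq0)>0$, with $q=c\,p(n)$ for a constant $c>0$ in model B; in both models the standing hypotheses (in particular \eqref{eq:hyp_sig4} together with $\si^2\le p\,\dE|\by|^2$ in model B) force $nq/\log n\to\infty$. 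A first-moment bound over cuts then gives, for $n\gg1$,
\[
\dP(L\text{ not irreducible})\ \le\ 2\sum_{k=1}^{\lfloor n/2\rfloor}\binom{n}{k}(1-q)^{k(n-k)}\ \le\ 2\sum_{k\ge1}\big(n\,e^{-qn/2}\big)^{k}\ \le\ 4n\,e^{-qn/2},
\]
which is summable in $n$ because $qn/\log n\to\infty$. By Borel--Cantelli, $L$ is a.s.\ irreducible for $n\gg1$, and then Perron--Frobenius gives that $\ker L^{\top}$ is one-dimensional and contains a unique vector $\Pi$ with $\mathbf 1^{\top}\Pi=1$.

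\textbf{Linearisation.}
Write $L=m(J-nI)+\Ul L$ and seek $\Pi=U_n+\de$ with $\mathbf 1^{\top}\de=0$. Since $(J-nI)^{\top}U_n=0$, the identity $L^{\top}\Pi=0$ becomes $m(J-nI)^{\top}\de+\Ul L^{\top}U_n+\Ul L^{\top}\de=0$. Both $L$ and $\dE L$ have zero row sums, so $\Ul L\,\mathbf 1=0$; hence $\Ul L^{\top}$ maps the hyperplane $H:=\{v:\mathbf 1^{\top}v=0\}$ into itself and $\Ul L^{\top}U_n\in H$, while on $H$ one has $(J-nI)^{\top}=-nI$. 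The equation thus reduces, inside $H$, to the linear system
\[
(mn\,I-\Ul L^{\top})\,\de\ =\ \Ul L^{\top}U_n .
\]

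\textbf{Inversion and estimate.}
The proofs of Theorems \ref{th:support}--\ref{th:support2} supply the operator-norm bounds $\NRM{\Ul X}\le(2+o(1))\si\sqrt n$ --- plus $\cO(\si^{1/2}n^{1/4}\log n)$ in model B, via Vu's estimate --- and $\NRM{\Ul D}\le(1+o(1))\si\sqrt{2n\log n}$ --- plus the analogous error in model B --- all a.s.\ for $n\gg1$. Since the hypotheses force $\si\sqrt{\log n/n}\to0$ and $\si^{1/2}n^{1/4}\log n=o(n)$, we get a.s.\ for $n\gg1$ that $\NRM{\Ul L}\le\NRM{\Ul X}+\NRM{\Ul D}\le\tfrac12\,mn$. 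Consequently $mn\,I-\Ul L^{\top}$ is invertible on $H$, the $\ell^2\!\to\!\ell^2$ norm of its inverse there being at most $(mn-\NRM{\Ul L})^{-1}\le 2/(mn)$, so $\de$ is uniquely determined and $U_n+\de=\Pi$ by the uniqueness above. Using $\NRM{U_n}_2=n^{-1/2}$ and $\NRM{\Ul L^{\top}}=\NRM{\Ul L}$,
\[
\NRM{\de}_2\ \le\ \frac{2}{mn}\,\NRM{\Ul L}\,\NRM{U_n}_2\ =\ \frac{2\,\NRM{\Ul L}}{mn^{3/2}},
\]
and therefore, by Cauchy--Schwarz,
\[
\TV{\Pi-U_n}=\tfrac12\NRM{\de}_1\ \le\ \tfrac12\sqrt n\,\NRM{\de}_2\ \le\ \frac{\NRM{\Ul L}}{mn}\ \le\ (1+o(1))\frac{\si\sqrt{2n\log n}}{mn}+\cO\!\Big(\frac{\si^{1/2}n^{1/4}\log n}{mn}\Big),
\]
which is exactly $\cO\big(\tfrac{\si}{m}\sqrt{\log n/n}\big)+\cO\big(\tfrac{\sqrt\si}{m}\,\tfrac{\log n}{n^{3/4}}\big)$.

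\textbf{Main obstacle.}
Past the irreducibility step the argument is soft linear algebra; the only quantitative input is the a.s.\ control $\NRM{\Ul L}=\NRM{\Ul X-\Ul D}=o(n)$. Securing this is exactly where the fourth-moment hypothesis of model A (Bai--Yin for $\NRM{\Ul X}$, a Gaussian-maximum estimate for the diagonal $\NRM{\Ul D}$) and the bounded-support together with $n\si^2\gg\log n$ hypotheses of model B (Vu's bound for sparse matrices) are used; one must also check that each such bound, as well as the cut estimate above, fails with probability summable in $n$, so that Borel--Cantelli yields the a.s.\ ``for $n\gg1$'' conclusions.
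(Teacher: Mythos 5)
Your proof is correct. It takes a route that is related to but structurally cleaner than the paper's, so let me compare. The paper writes $L=\Ul L+m\phi\phi^*-mnI$ and exploits the rank-one structure via Sylvester's determinant theorem: it shows that $z-mn$ is an eigenvalue of $L$ iff $1+m\phi^*(\Ul L-zI)^{-1}\phi=0$, identifies the left eigenvector at the trivial eigenvalue $0$ as $u^*=\phi^*(I-\Ul L/(mn))^{-1}$, and then controls $\|u-\phi\|_2$ via the resolvent identity $(I-A)^{-1}-I=A(I-A)^{-1}$ before normalising to get $\Pi$. You instead set $\Pi=U_n+\de$ on the zero-sum hyperplane $H$, observe that $\Ul L^{\top}$ preserves $H$ and that $(J-nI)^{\top}=-nI$ on $H$, and thereby reduce $L^{\top}\Pi=0$ \emph{exactly} to the linear system $(mnI-\Ul L^{\top})\de=\Ul L^{\top}U_n$ in $H$, then invert by the same Neumann-series bound. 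The quantitative input is identical in both proofs — the a.s.\ operator-norm control of $\Ul L$ from theorems \ref{th:support} and \ref{th:support2} — and the final Cauchy--Schwarz step from $\ell^2$ to $\ell^1$ is the same. Your version avoids the determinant lemma and the normalisation-by-$\sum u_k$ step at the end, at the small price of spelling out that $\Ul L^{\top}$ preserves $H$ and that uniqueness of the Perron vector pins down $\de=\Pi-U_n$. Your irreducibility argument is also more self-contained: a direct first-moment bound over cuts, rather than the paper's citation of the Erd\H{o}s--R\'enyi connectivity threshold; both are valid since in either model $q=\dP(\bx\neq0)$ satisfies $nq/\log n\to\infty$ under the standing hypotheses, and the error probabilities are indeed summable for Borel--Cantelli.
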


\begin{figure}[htbp]
  \begin{center}
    \includegraphics[scale=0.7]{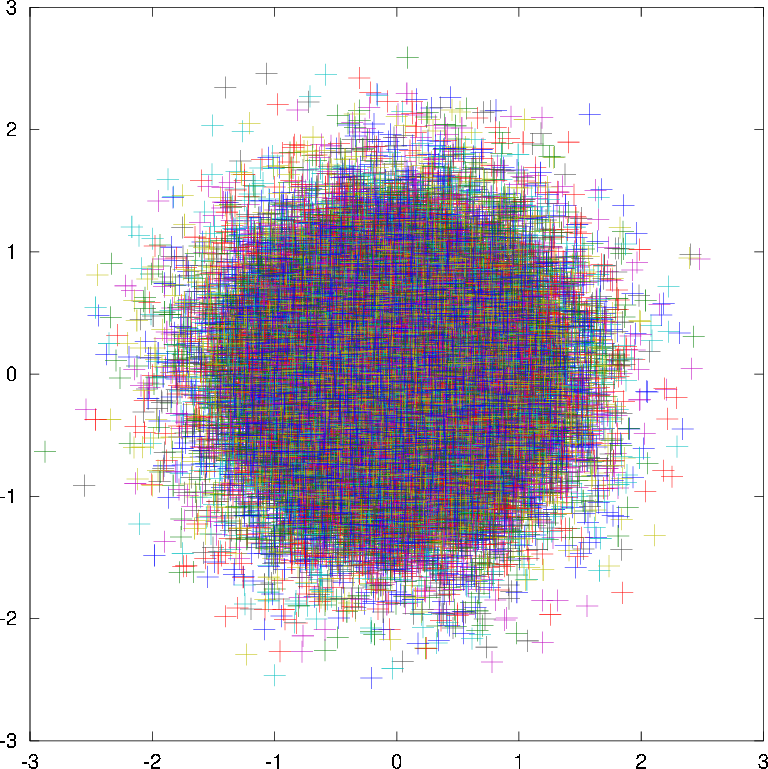}
    \vspace{2em}
    \includegraphics[scale=0.7]{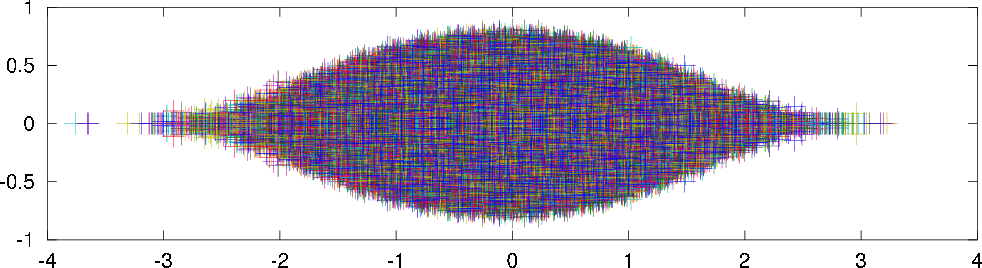}
    \caption{The bottom graphic shows a simulation of $50$ i.i.d.\ copies of
      the spectrum of $n^{-1/2}L$ with $n=500$ when $\cL$ is the exponential
      law on $\dR$ of parameter $1$ shifted by $-1$ ($m=0$, $K_{11}=1$, and
      $K_{12}=K_{21}=K_{22}=0$). The top graphics shows a simulation of $50$
      i.i.d.\ copies of the spectrum of $n^{-1/2}L$ with $n=500$ when $\cL$ is
      the Gaussian law on $\dC$ of mean $0$ and covariance $K=\frac{1}{2}I_2$.
      These simulations and graphics were produced with the free software GNU
      Octave provided by Debian GNU/Linux.}
    \label{fi:simus}
  \end{center}
\end{figure}


\subsection{Comments and remarks}

We conclude the introduction with a list of comments and open questions.

\subsubsection{Interpolation}

A first observation is that all our results for the matrix $L=X-D$ can be
extended with minor modifications to the case of the matrix $L^{(\de)}=X-\de
D$, where $\de\in\dR$ is independent of $n$, provided the law $\cN(0,K)$
characterizing our limiting spectral distributions is replaced by
$\cN(0,\de^2K)$. This gives back the circular law for $\de=0$.

\subsubsection{Almost sure convergence}
One expects that the convergence in theorem \ref{th:eigenvals} holds almost
surely and not simply in probability. This weaker convergence comes from our
poor control of the smallest singular value of the random matrix $M-z$. In the
special case when the law $\cL(n)$ of the entries has a bounded density
growing at most polynomially in $n$, then arguing as in \cite{MR2892961}, it
is possible to prove that the convergence in theorem \ref{th:eigenvals} holds
almost surely.

\subsubsection{Sparsity}
It is natural to conjecture that theorem \ref{th:eigenvals} continues to hold
even if \eqref{eq:hyp_sig2} is replaced by the weaker condition
\eqref{eq:hyp_sig}. However, this is a difficult open problem even in the
simpler case of the circular law which corresponds to analyze
$\mu_{n^{-1/2}\si^{-1}X}$. To our knowledge, our assumption
\eqref{eq:hyp_sig2} improves over previous works
\cite{gotze-tikhomirov-new,MR2977992}, where the variance is assumed to
satisfy $n^{1-\veps}\si^2 \to\infty$ for some $\veps >0$. Assumption
\eqref{eq:hyp_sig2} is crucial for the control of the smallest singular value
of $M$. We believe that with some extra effort the power $6$ could be reduced.
However, some power of $\log(n)$ is certainly needed for the arguments used
here. It is worthy of note that theorem \ref{th:singvals} holds under the
minimal assumption \eqref{eq:hyp_sig}.

\subsubsection{Dependent entries} One may ask if one can relax the i.i.d.\
assumptions on the entries of $X$. A possible tractable model might be based
on log-concavity, see for instance \cite{adamczak-chafai} and references
therein for the simpler case of the circular law concerning
$\mu_{n^{-1/2}\si^{-1}X}$. Actually, one may expect that the results remain
valid if one has some sort of uniform tightness on the entries. To our
knownledge, this is not even known for $\mu_{n^{-1/2}\si^{-1}X}$, due to the
difficulty of the control of the small singular values.

\subsubsection{Heavy tails}
A different model for random Markov generators is obtained when the law $\cL$
of $\bx$ has heavy tails, with e.g.\ infinite first moment. In this context,
we refer e.g.\ to
\cite{bordenave-caputo-chafai-heavygirko,bordenave-chafai-changchun} for the
spectral analysis of non-Hermitian matrices with i.i.d.\ entries, and to
\cite{bordenave-caputo-chafai-ii} for the case of reversible Markov transition
matrices. It is natural to expect that, in contrast with the cases considered
here, there is no asymptotic independence of the matrices $X$ and $D$ in the
heavy tailed case.

\subsubsection{Spectral edge and spectral gap}
Concerning theorem \ref{th:support}, it seems natural to conjecture the
asymptotic behavior $\kappa= mn - \si\sqrt{2n\log(n)}\,(1+o(1))$ for the
spectral gap \eqref{sgap}, but we do not have a proof of the corresponding upper bound. 
In the same spirit, in the setting of theorem \ref{th:support} or
theorem \ref{th:support2} we believe that with probability one, with $\Ul
L=L-\dE L$,
\[
\lim_{n\to\infty} \frac{s_1(\Ul L)}{\si\sqrt{2 n\log(n)}} %
=\lim_{n\to\infty}\frac{|\la_1(\Ul L)|}{\si\sqrt{2 n\log(n)}}=1,
\]
which contrasts with the behavior of $\Ul X$ for which $s_1/|\la_1|\to2$ as
$n\to\infty$ under a finite fourth moment assumption \cite{MR950344}.

\bigskip

The rest of the article is structured as follows. Sections
\ref{se:proof:th:singvals} and \ref{se:proof:th:eigenvals} provide the proof
of theorem \ref{th:singvals} and of theorem \ref{th:eigenvals} respectively.
Section \ref{se:proof:th:naturemu+th:propmu} is devoted to the proof of
theorem \ref{th:naturemu} and of theorem \ref{th:propmu}. Section
\ref{se:proof:th:support} gives the proof of theorems \ref{th:support} and
\ref{th:support2}, section \ref{sec:invmeas} contains a proof theorem
\ref{th:invmeas}. Finally, an appendix collects some facts on concentration
function and small probabilities.

\section{Convergence of singular values: Proof of theorem \ref{th:singvals}}
\label{se:proof:th:singvals}

We 
adapt the strategy of proof described in \cite[section
4.5]{bordenave-chafai-changchun}. 

\subsection{Concentration of singular values measure}
A standard separability and density argument shows that it is sufficient to
prove that for any compactly supported $\cC^\infty(\dR)$ function $f$, a.s.\
\[
\int\!f\,d\nu_{M-z}-\int\!f\,d\nu_{z}\to 0.
\]
Since the matrix $M - z$ has independent rows, we can rely on the
concentration of measure phenomenon for matrices with independent rows, see
\cite{bordenave-caputo-chafai-heavygirko}, \cite{bordenave-chafai-changchun},
or \cite{MR2535081}. In particular, using \cite[lemma
4.18]{bordenave-chafai-changchun} and the Borel-Cantelli lemma, we obtain that
for any compactly supported continuous function $f$, a.s.\
\[
\int\!f\,d\nu_{M-z} - \dE\int\!f\,d\nu_{M-z} \to 0.
\]
In other words, in order to prove the first part of  theorem \ref{th:singvals}, it is sufficient to prove the convergence to $\nu_z$
of the averaged measure $\dE \nu_{M -z}$.

\subsection{Centralization and truncation}

We now prove that it is sufficient to prove the convergence for centered
entries with bounded support. We first notice that
\[
\dE M =  \si^{-1} n^{-1/2}  m J
\] 
has rank one, where $J$ stands for the matrix with all entries equal to $1$.
Hence, writing $\Ul M = M - \dE M$, from standard perturbation inequalities (see e.g.\ \cite[Th.\
3.3.16]{MR1288752}):
\begin{equation}
  \label{eq:IPPinterlacing} 
  \ABS{ \int\!f\,d\nu_{\Ul M-zI} - \int\!f\,d\nu_{M -zI}}%
  \leq \BV{f} \frac{\RANK(M- \Ul M)}{n} %
  \leq \frac{\BV{f}}{n},
\end{equation}
where $\BV{f} = \int\!|f'(t)|\,dt $ denotes the bounded variation norm of
$f$. In particular, it is sufficient to prove the convergence of $\dE\nu_{\Ul
  M-zI}$ to $\nu_z$. Recall the definition \eqref{bars} of the centered matrices $\Ul X, \Ul D$. Define
\[
X'_{ij} = \Ul X_{ij} \IND_{\BRA{|\Ul X_{ij} |\leq \veps \si  \sqrt{n}}},
\]
where $\veps = \veps_n$ is a sequence going to $0$ such that 
\begin{equation}\label{eq:hyp_lind2}
  \lim_{n \to \infty}  %
  \dE\SBRA{\frac{|X_{ij}-m|^2}{\si^2}\IND_{\{|X_{ij}-m|^2\geq\veps^2\si^2 n}\}} = 0.
\end{equation}
(its existence is guaranteed by assumption \eqref{eq:hyp_lind}). Then, let $X'
= (X'_{ij} )_{ 1 \leq i , j \leq n}$, $L' = X' - \Ul D$ and  $M' = L'/(\si\sqrt{n})$. From
Hoffman-Wielandt inequality, we have
\begin{align*}
  \frac{1}{n}\sum_{k=1}^n\ABS{s_k(M'-zI)-s_k(\Ul M-zI)}^2 %
  & \leq  \frac{1}{n} \sum_{1 \leq i,j \leq n}| \Ul M_{ij} - M'_{ij} |^2 \\
  & =   \frac{1}{n^2 \si^2}\sum_{1 \leq i , j \leq n}
  |\Ul X_{ij}|^2\IND_{\BRA{|\Ul X_{ij}|>\veps \si  \sqrt{n}}} 
\end{align*}
Then by \eqref{eq:hyp_lind2}, we deduce that 
\[
\dE\,\frac{1}{n}\sum_{k=1}^n\ABS{s_k(M'-zI)-s_k( \Ul M-zI)}^2 \to 0.
\]
The left hand side above is the square of the expected Wasserstein $W_2$
coupling distance between $ \nu_{M' - z I }$ and $ \nu_{ \Ul M - z I }$. Since
the convergence in $W_2$ distance implies weak convergence, we deduce that it
is sufficient to prove the convergence of $\dE \nu_{M' - z I }$ to $\nu_z$. We
then center the entries of $X'$, and set $\Wt L = L' - m' J$ and $\Wt M = \Wt
L/(\si\sqrt{n})$, where $m' = \dE X'_{11} $. As in \eqref{eq:IPPinterlacing},
we find
\[
  \ABS{ \int\!f\,d\nu_{\Wt M-zI} - \int\!f\,d\nu_{M' -zI}}%
  \leq \BV{f} \frac{\RANK(\Wt M- M')}{n} %
  \leq \frac{\BV{f}}{n}.
\]
Finally, consider the generator associated to $X'':=X'-\dE X'$. Namely, define
the matrices $L'' = X'' - D''$ and $M'' = L''/(\si\sqrt{n})$, where $D''$ is
the diagonal matrix with
\[
D''_{ii} = \sum_{j } (X'')_{ij} %
= \Ul D_{ii} -\sum_{j}\Ul X_{ij}\IND_{\BRA{|\Ul X_{ij} | >\veps\si\sqrt{n}}}-nm'.
\]
Using $m ' = - \dE \Ul X_{11}\IND_{\BRA{|\Ul X_{ij} | > \veps \si \sqrt{n}}}$
and again the Hoffman-Wielandt inequality, one has
\begin{align*}
  & \frac{1}{n}\sum_{k=1}^n\ABS{s_k(\Wt M-zI)-s_k(M''-zI)}^2 %
  \leq \frac{1}{n^2 \si^2}\sum_{1 \leq i \leq n}\ABS{D''_{ii}-\Ul D_{ii}}^2\\
  & \quad \quad \leq \frac{1}{n^2 \si^2}\sum_{1 \leq i \leq n} %
  \ABS{\sum_{1\leq j\leq n} \PAR{ \Ul X_{ij}\IND_{\{|\Ul X_{ij}|>\veps
        \si\sqrt{n}\}} - \dE \Ul X_{ij}\IND_{\{|\Ul X_{ij}|>\veps
        \si\sqrt{n}\}} } }^2.
\end{align*}
The expectation of the above expression equals 
\[
\frac{1}{n^2 \si^2}\sum_{1 \leq i,j \leq n} %
  \dE\,\ABS{\Ul X_{ij}\IND_{\{|\Ul X_{ij}|>\veps \si\sqrt{n}\}}  - \dE \Ul X_{ij}\IND_{\{|\Ul X_{ij}|>\veps \si\sqrt{n}\}} } ^2,
\]
which tends to $0$ by \eqref{eq:hyp_lind2}.

In summary, for the remainder of the proof, we will assume without loss of generality that the law of
$X_{ij}$ satisfies
\begin{equation}\label{eq:propX11}
  \dE X_{11} = 0  \, , %
  \quad \dP ( |X_{11} |\geq \kappa(n) )  = 0 %
  \quad \text{and} \quad \dE |X_{11} |^2   = {\si'(n)}^2,
\end{equation}
where 
\begin{equation*}
  \label{eq:hyp_ks}
  \kappa(n) = o (\si(n)\sqrt{n}) %
  \quad \text{and} \quad  %
  \si'(n) = \si(n)( 1 + o(1)). 
\end{equation*}

\subsection{Tightness}
\label{subsec:tightness}
Let us check that $\dE \nu_{M - z I}$ is tight. Recall an instance of the Weyl
inequality: for all $A,B$ in $\cM_n (\dC)$, for all $i$:
\[
| s_i ( A ) - s_i (B) | \leq s_{1} ( A - B).
\]
Consequently,
\[
\int\!s^2\,d\nu_{M - z I}(s) \leq \int\!s^2\,d\nu_{M}(s) + |z|^2.
\]
It is thus sufficient to check that $\dE \int\!s^2\,d\nu_{M}(s)$ is uniformly
bounded. However,
\begin{align*}
  \int\!s^2\,d\nu_{M}(s)
  & =  \frac{1}{n}\sum_{1 \leq i,j \leq n}|M_{ij}|^2 \\
  & = \frac{1}{n^2\si^2}\sum_{1 \leq i \ne j \leq n} |X_{ij}|^2 %
  + \frac{1}{n^2\si^2}\sum_{1 \leq i \leq n} %
  \ABS{\sum_{1 \leq j \leq n , j \ne i } X_{ij}}^2 .
\end{align*}
The conclusion follows by taking expectation and using \eqref{eq:propX11}.

\subsection{Linearization}\label{linearization}

We use a common linearization technique. With the notation from
\eqref{eq:agnuz} and \eqref{eq:bgnuz} one has the identity of the
Cauchy-Stieltjes transform, for $\eta \in \dC_+$,
\begin{equation}\label{eq:resstieljes}
  S_{\check\nu_{M-zI}}(\eta) = \frac{1}{2n}\TR \left[(H(z)-\eta I_{2n})^{-1}\right], 
\end{equation}
where $I_{2n}$ is the $2n\times 2n$ identity matrix and $H(z)$ is the $2n\times 2n$ hermitian matrix
\[
H(z) :=
\begin{pmatrix} 
  0 & M - z \\ 
  ( M- z)^* &  0  
\end{pmatrix},
\]
with eigenvalues $\{\pm s_i(M-z),\,i=1,\dots,n\}$.
Define $\dH_+ \subset\cM_2(\dC)$ as 
\begin{equation}\label{H_+}
\dH_+ := \BRA{
\begin{pmatrix}
  \eta & z \\ \bar z & \eta
\end{pmatrix}, z \in \dC, \eta \in \dC_+}.
\end{equation}
For $q\in\dH_+$, with 
\[
q(z,\eta) := 
\begin{pmatrix} \eta & z \\ \bar z  & \eta \end{pmatrix},
\] 
let $q (z,\eta) \otimes I_n$ denote the   $2n\times 2n$ matrix obtained by repeating $n$ times along the diagonal the $2\times 2$ block $q$.
Through a permutation of the entries, the matrix $H(z)-\eta I_{2n}$ is equivalent to the
matrix 
\begin{equation}\label{Bq}
B - q (z,\eta) \otimes I_n,
\end{equation}
where $B$ is obtained from the $2\times 2$ blocks $B_{ij}$, $1\leq i,j\leq n$:
\[
B_{ij} :=   
\begin{pmatrix} 0 & M_{ij} \\ \bar M_{ji}  & 0 \end{pmatrix}.
\]
If $B(z):=B - q (z,0) \otimes I_n$, then 
$B (z) \in \cM_n ( \cM_{2} (\dC)) \simeq \cM_{2n} ( \dC)$ is
Hermitian, and its resolvent is denoted by
\begin{equation}\label{R(q)}
R(q) = (B(z) -  \eta I_{2n} )^{-1}  = (B - q (z,\eta)  \otimes I_n )^{-1} .
\end{equation}
Then $R(q)\in\cM_n(\cM_{2}(\dC))$ and, by \eqref{eq:resstieljes}, we deduce
that
\[
S_{\check \nu_{M-zI}} (\eta) = \frac{1}{2n}\TR R (q).
\]
We set 
\[
R(q)_{kk} = 
\begin{pmatrix} 
  a_{k} (q) & b _{k} (q) \\ 
  c _{k} (q)& d _{k} (q)
\end{pmatrix}
\in \cM_{2} (\dC).
\]
As in \cite[lemma
4.19]{bordenave-chafai-changchun}, it is not hard to check that
\begin{equation}\label{eq:a=d}
  a(q) := \frac{1}{n}\sum_{k= 1}^n a _{k} (q) %
  = \frac{1}{n}\sum_{k= 1}^nd_{k} (q)%
  \quad \text{and  } \quad %
  b (q) := \frac{1}{n}\sum_{k= 1}^nb _{k} (q) %
  = \frac{1}{n} \sum_{k= 1}^n  \bar c_{k}(q),
\end{equation} 
It follows that
\begin{equation} \label{eq:resstieljes2}
  S_{\check \nu_{M  - zI}} (\eta)  =  a (q).
\end{equation}
Hence, in order to prove that $\dE \nu_{M- z}$ converges, it is sufficient to
prove that $\dE a(q)$ converges to, say, $\al(q)$, for all $q\in\dH_+$.  By tightness, $\al(q)$ will 
necessarily be the Cauchy-Stieltjes transform of a symmetric measure. (Indeed,
since $ \check \nu_{M- zI}$ is tight and symmetric, any accumulation point of
$ \check \nu_{M- zI}$ will be a symmetric probability measure. Also, recall
that the weak convergence of a sequence of probability measures
on $\dR$, $(\nu_n)_{n \geq 1}$ to $\nu$, is equivalent to the convergence for all $\eta \in
\dC_+$ of $S_{\nu_n } (\eta)$ to $S_{\nu } (\eta)$).

\subsection{Approximate fixed point equation}

We use a resolvent method to deduce an approximate fixed point equation
satisfied by $a(q)$. The Schur block inversion formula states that if $A$ is a
$k\times k$ matrix then for every partition $\{1,\ldots,k\}=I\cup I^c$,
\begin{equation*}\label{eq:schur}
  (A^{-1})_{I,I}=(A_{I,I}-A_{I,I^c}(A_{I^c,I^c})^{-1}A_{I^c,I})^{-1}.
\end{equation*}
Applied to $k=2n$, $A=B(z)-\eta I_{2n}$, $q=q(z,\eta)$, it gives
\begin{equation}\label{resolnn}
R(q)_{nn} = \PAR{ 
  \begin{pmatrix} 
    0 & M_{nn} \\ 
    \bar M_{nn} & 0 
  \end{pmatrix} - q - Q^* \Wt R (q)Q } ^{-1},
\end{equation}
where $Q \in  \cM_{n-1, 1} ( \cM_{2} (\dC))$, is the $2(n-1)\times 2$ matrix given by the blocks
\[
Q_ i = 
\begin{pmatrix} 
  0 & M_{n i } \\ 
  \bar M_{i n} & 0 
\end{pmatrix} 
= \frac{1}{\si\sqrt{n}} 
\begin{pmatrix} 
  0 & X_{n i } \\ 
  \bar X_{i n} & 0 
\end{pmatrix}
\]
for $i=1,\dots,n-1$, and
 $\Wt B = (B_{ij})_{1 \leq i,j \leq n-1}$, $\Wt B (z) =
\Wt B - q (z, 0) \otimes I_{n-1}$,
\[
\Wt R (q)%
= ( \Wt B - q \otimes I_{n-1}) ^{-1} %
= ( \Wt B (z) - \eta I_{2(n-1)}) ^{-1}
\]
is the resolvent of a minor. 

Define the matrix $M' \in \cM_{n-1} (\dC)$ by
\[
M'_{ij} = \frac{ X_{ij} }{\si \sqrt{n}} -\delta_{i,j}\sum_{1 \leq k \leq n-1} \frac{ X_{ik} }{\si \sqrt{n}} ,
\]
for $i,j=1,\dots,n-1$.
Let $R'$ and $B' $ in $ \cM_{n-1} ( \cM_{2} (\dC)) $ be the matrices
obtained as in \eqref{Bq} and \eqref{R(q)} with $M$ replaced by $M' $.
From the resolvent formula and the bounds $\|R '\|,\|\Wt R\|\leq (\Im (\eta))^{-1}$:
\[
\NRM{\Wt R - R' } = \NRM{ R' ( \Wt B - B') \Wt R} %
\leq \frac{1}{\si \sqrt{n} \Im (\eta) ^2 } %
\NRM{\DIAG\PAR{\begin{pmatrix} 0 & X_{n i } \\ \bar X_{i n} &
      0 \end{pmatrix}}_{1 \leq i \leq n-1}}.
\]
Hence using \eqref{eq:propX11}, we deduce the uniform estimate 
\begin{equation}\label{eq:wtRtoR}
\|\Wt R - R'  \| \leq   \frac{\kappa}{\si \sqrt{n} \Im (\eta) ^2 }   = o(1). 
\end{equation}
Here and below, $o(1)$ denotes a vanishing deterministic sequence, that depends on $q(z,\eta)$ through $\Im (\eta)$ only. Since
$\| Q^* S Q \| \leq \|S \| \|Q^* Q \|$ and $\dE \| Q^* Q \| = \cO(1)$, with
$S=\Wt R - R'$ we obtain
\[
R_{nn} = - \PAR{ -  \begin{pmatrix} 
    0 & M_{nn} \\ 
    \bar M_{nn} & 0 
  \end{pmatrix} + q + {Q}^* R'
  Q + \veps_1} ^{-1}.
\]
with $\veps_1$ a $2\times 2$ matrix satisfying $\dE \|\veps_1 \|= o(1)$. 

We denote by $\cF_{n-1}$ the 
$\si$-algebra spanned by the
variables $(X_{ij})_{1 \leq i ,j \leq n-1}$. Then $R'$ is
$\cF_{n-1}$-measurable and is independent of $Q$. If $\dE_n [\,\cdot\,] := \dE
[\,\cdot\,|\cF_{n-1}]$, we get, using \eqref{eq:propX11} and \eqref{eq:a=d}
\begin{align*}
  \dE_n \SBRA{Q^* R' Q }
  & = \sum_{1 \leq k,\ell\leq n-1}\dE_n\SBRA{Q_k^*R'_{k \ell}Q_\ell}
   = \frac{{\si'}^2}{ \si^2 n} \sum_{k = 1}^{n-1} %
  \begin{pmatrix} a'_{k} & 0\\ 0 & d'_{k} \end{pmatrix}\\
  & = \frac{{\si'}^2}{ \si^2 n} \sum_{k = 1}^{n-1} %
  \begin{pmatrix} a'_{k} & 0\\ 0 & a'_{k} \end{pmatrix} = \frac{1}{ n} \sum_{k = 1}^{n-1} %
  \begin{pmatrix} \Wt a_{k} & 0\\ 0 & \Wt a_{k} \end{pmatrix} + \veps_2,
\end{align*}
where 
\[
R'_{kk} = 
\begin{pmatrix} 
  a'_{k} & b'_{k} \\ 
  c'_{k} & d'_{k} %
\end{pmatrix}
\quad \text{and} \quad %
\Wt R_{kk} =
\begin{pmatrix} 
  \Wt a_{k} & \Wt b_{k} \\ 
  \Wt c_{k} & \Wt d_{k} %
\end{pmatrix},
\]
and $\veps_2$ is a $2\times 2$ matrix.
Using \eqref{eq:wtRtoR}, we have the bound 
\[
\ABS{\frac{1}{n}\sum_{k=1}^{n-1}\Wt a_{k}-\frac{1}{n}\sum_{k=1}^{n-1}a'_{k}} %
= \ABS{\frac{1}{2n}\TR(\Wt R)-\frac{1}{2n}\TR(R')} %
\leq \NRM{\Wt R - R'} = o(1)
\]
We deduce that $\|\veps_2 \|= o(1)$. Similarly, recall that $\Wt B(z)$ is a
minor of $B(z)$. We may thus use the interlacing inequality
\eqref{eq:IPPinterlacing} for the function $f = (\cdot - \eta)^{-1}$. We find
\[
\ABS{\sum_{k = 1}^{n-1} \Wt a_{k} - \sum_{k = 1}^{n} a_{k}} %
\leq 2 \int_{\dR} \frac{1}{|x - \eta |^2} dx %
= \cO\PAR{\frac{1}{\Im(\eta)}}.
\]
In summary, we have checked that
\[
\dE_n \SBRA{Q^* \Wt R  Q }  %
=   \begin{pmatrix}   a  &  0    \\  0  &   a  \end{pmatrix} + \veps_3, 
\]
where $a=a(q)$ is as in \eqref{eq:resstieljes2}, and $\veps_3$ satisfies
$\NRM{\veps_3}=o(1)$. Moreover, we define
\[
\veps_4:=
\dE_n \SBRA{\PAR{Q^* \Wt R  Q-  \dE_n \SBRA{Q^* \Wt R  Q}}^*  %
  \PAR{Q^* \Wt R Q- \dE_n \SBRA{Q^* \Wt R Q}}}. %
\]
Since $\|\Wt R\| \leq \Im (\eta) ^{-1}$, we have
\[
\| \Wt R^*_{ii} \Wt R_{ii} \| \leq \Im ( \eta)^{-2} %
\quad \text{and} \quad %
\TR \Big( \sum_{i,j} \Wt R^*_{ij} \Wt R_{ji} \Big) = \TR ( \Wt R^* \Wt R ) \leq 2 n
\Im ( \eta)^{-2}.
\] 
Also, by \eqref{eq:propX11} 
\[
\dE  | X^2_{ij} - {\si'}^2 |^2 \leq 2 \kappa^2 {\si'}^2.   
\]
Then, an elementary
computation gives
\[
\NRM{\veps_4} %
\leq \TR (  \veps_4 )  %
= \cO\PAR{ \frac{\kappa^2{\si'}^2}{n\Im(\eta)^2 \si^4}} = o(1).
\]
Moreover, $a(q)$ is close to its expectation. More precisely, from
\cite[lemma 4.21]{bordenave-chafai-changchun}, 
 \[
\dE|a(q)-\dE a(q)|^2 = \cO\PAR{ \frac{1}{n\Im(\eta)^2}} = o(1).
\]
We recall finally that the central limit theorem with Lindeberg condition
implies that
\[
M_{nn}  = - \frac{1}{\si\sqrt{n}}\sum_{i=1} ^ {n-1} X_{ni}
\] 
converges weakly to $G$ with distribution $\cN(0,K)$. From Skorokhod's
representation theorem, we may assume that this convergence holds almost
surely. Then, we have proved that the $2\times 2$ matrix
\[
A = - 
\begin{pmatrix} 0 & M_{nn} \\ \bar M_{nn} & 0 \end{pmatrix} %
+ 
\begin{pmatrix} 0 & G\\ \bar G & 0 \end{pmatrix}  + Q^* \Wt R Q %
- \dE \begin{pmatrix} a & 0 \\ 0 & a \end{pmatrix}
\]
has a norm which converges to $0$ in probability as $n\to\infty$. 
On the other hand, from \eqref{resolnn}, 
\begin{equation}\label{resolnn2}
R_{nn}A= -R_{nn}\PAR{ q+\dE\begin{pmatrix} a & 0 \\ 0 & a \end{pmatrix}
  - \begin{pmatrix} 0 & G\\ \bar G & 0 \end{pmatrix} } - I_2
\end{equation}
Since the norms of $\PAR{ q + \dE \begin{pmatrix} a & 0 \\ 0 &
    a \end{pmatrix} - \begin{pmatrix} 0 & G\\ \bar G & 0 \end{pmatrix} }
^{-1}$ and $R_{nn}$ are at most $\Im (\eta)^{-1}$, we get
\[
\dE R_{nn} %
= -\dE \PAR{ q+\dE\begin{pmatrix} a & 0 \\ 0 & a \end{pmatrix}
  - \begin{pmatrix} 0 & G\\ \bar G & 0 \end{pmatrix} }^{-1}+\veps
\]
with $\NRM{\veps}= o(1)$. Using exchangeability, we get that the functions in \eqref{eq:a=d} satisfy
\[
\dE \begin{pmatrix} a & b \\ \bar b & a \end{pmatrix} %
= -\dE \PAR{ q + \dE \begin{pmatrix} a & 0 \\ 0 & a \end{pmatrix}
  - \begin{pmatrix} 0 & G\\ \bar G & 0 \end{pmatrix} }^{-1} + \veps.
\]

\subsection{Uniqueness of the fixed point equation}
\label{subsec:unicity}
From what precedes, any accumulation point of $\dE \begin{pmatrix} a & b \\
  \bar b & a \end{pmatrix}$ is solution of the fixed point equation
\begin{equation}\label{eq:FPCircular}
  \begin{pmatrix} \al & \beta \\ \bar \beta & \al \end{pmatrix}   %
  = -\dE  \PAR{ q + \begin{pmatrix} \al & 0 \\ 0 & \al \end{pmatrix}   - \begin{pmatrix} 0 & G\\ \bar G & 0 \end{pmatrix}   }^{-1}.
\end{equation}
with $\al = \al (q) \in \dC_+$. Therefore, for $q=q(z,\eta)\in\dH_+$:
\[
\al = \dE \frac{ \al+ \eta}{| G - z|^2 - ( \al + \eta)^2 }.
\]
The above identity is precisely the fixed point equation satisfied
by $S_{\check \nu_z} (\eta)$ given in theorem \ref{th:singvals}. Hence, to
conclude the proof of theorem \ref{th:singvals}, it is sufficient to prove
that there is a unique symmetric measure whose Cauchy-Stieltjes transform is
solution of this fixed point equation. We know from \eqref{eq:resstieljes2}
and Montel's theorem that $\eta\in\dC_+\mapsto \al (q(z, \eta))\in\dC_+$ is
analytic for every fixed $z\in\dC$. In particular, it is sufficient to check that there is a unique
solution in $\dC_+$ for $\eta = it$, for a fixed $t >0$. 
If $h(z,t) = \Im (\al (q ))$, we find
\[
h = \dE\, \frac{ h + t }{| G- z|^2 + ( h  + t)^2 }.
\]
Thus, $h  \ne 0$ and 
\[
1 = \dE \,\frac{ 1 + t  h ^{-1} }{| G - z|^2 + ( h  + t)^2 }.
\]
The right hand side in a decreasing function in $h$ on $(0, \infty)$ with
limits equal to $+ \infty$ and $0$ at $h \to 0$ and $h \to \infty$. Thus,
there is a unique solution $h>0$ of the above equation. The proof of theorem
\ref{th:singvals} is over.

\section{Convergence of eigenvalues: Proof of theorem \ref{th:eigenvals}}
\label{se:proof:th:eigenvals}

\subsection{Strategy of proof}

In order to prove theorem \ref{th:eigenvals}, we will use the Hermitization
method; see e.g.\  \cite[lemma 4.3]{bordenave-chafai-changchun} for
the proof of the next lemma.

\begin{lemma}[Hermitization]\label{le:girko2}
  Let ${(A_n)}_{n\geq1}$ be a sequence of complex random matrices where $A_n$
  is $n \times n$ for every $n\geq1$. Suppose that there exists a family
  ${(\nu_z)}_{z\in\dC}$ of (non-random) probability measures on $\dR_+$ such
  that for a.a. $z \in \dC$,
  \begin{itemize}
  \item[$(i)$] $\nu_{A_n-z}$ tends weakly in probability to $\nu_z$;
  \item[$(ii)$] $\log(\cdot)$ is uniformly integrable in probability for
    $(\nu_{A_n-z})_{n\geq1}$.
  \end{itemize}
  Then, in probability, $\mu_{A_n}$ converges weakly to the probability measure $\mu$ defined by
  \[
  \mu= \frac{1}{2\pi}\Delta \int_0^\infty\!\log(t)\,d\nu_z(t).
  \]
\end{lemma}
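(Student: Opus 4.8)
The plan is to follow the standard Hermitization recipe (Girko's method) and reduce the statement to the two hypotheses $(i)$ and $(ii)$ already assumed. Recall that for an $n\times n$ matrix $A_n$ with eigenvalues $\la_1,\dots,\la_n$ one has, for every $z\in\dC$,
\[
\log\ABS{\det(A_n-z)}
= \sum_{k=1}^n \log\ABS{\la_k(A_n)-z}
= n\int_0^\infty\!\log(t)\,d\nu_{A_n-z}(t),
\]
since the squared singular values of $A_n-z$ are the eigenvalues of $(A_n-z)(A_n-z)^*$ and $\prod_k s_k(A_n-z)=\ABS{\det(A_n-z)}$. Equivalently, introducing the logarithmic potential
\[
U_{\mu_{A_n}}(z) = -\int_{\dC}\!\log\ABS{z-w}\,d\mu_{A_n}(w)
= -\int_0^\infty\!\log(t)\,d\nu_{A_n-z}(t),
\]
one has the distributional identity $\mu_{A_n} = \frac{1}{2\pi}\Delta\,\big(-U_{\mu_{A_n}}\big)$ in $\cD'(\dR^2)$, because $\frac{1}{2\pi}\Delta\log\ABS{\cdot} = \delta_0$. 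So the eigenvalue distribution is completely encoded, as a Schwartz distribution, by the family of functions $z\mapsto \int_0^\infty\log(t)\,d\nu_{A_n-z}(t)$.

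The key steps, in order, are as follows. First, fix a test function $\varphi\in\cC_c^\infty(\dR^2)$; by the above identity and integration by parts against the Laplacian,
\[
\int_{\dC}\!\varphi\,d\mu_{A_n}
= \frac{1}{2\pi}\int_{\dC}\!\Delta\varphi(z)\,\PAR{\int_0^\infty\!\log(t)\,d\nu_{A_n-z}(t)}\,dz,
\]
and similarly for $\mu$ with $\nu_z$ in place of $\nu_{A_n-z}$. Second, set $g_n(z):=\int_0^\infty\log(t)\,d\nu_{A_n-z}(t)$ and $g(z):=\int_0^\infty\log(t)\,d\nu_z(t)$; it suffices to show $\int\Delta\varphi(z)\,g_n(z)\,dz\to\int\Delta\varphi(z)\,g(z)\,dz$ in probability. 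Third, for Lebesgue-a.e.\ fixed $z$, hypothesis $(i)$ gives $\nu_{A_n-z}\weak\nu_z$ in probability, and hypothesis $(ii)$ — uniform integrability of $\log(\cdot)$ in probability — upgrades this to $g_n(z)\to g(z)$ in probability (weak convergence together with uniform integrability of an unbounded function implies convergence of its integral; the delicate side is the singularity of $\log$ at $0$, controlled precisely by $(ii)$). Fourth, to pass from a.e.-pointwise-in-$z$ convergence in probability to convergence of the integral against $\Delta\varphi$, use a dominated-convergence argument in probability: one needs an integrable (in $z$, uniformly in $n$, in a suitable probabilistic sense) bound on $g_n$ on the compact $\SUPP(\Delta\varphi)$. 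The upper tail of $g_n$ is easy since $s_1(A_n-z)$ is polynomially bounded with high probability, giving $g_n(z)\le C\log n$; the lower tail, i.e.\ the contribution of small singular values, is handled by a bound of the form $\int_{\SUPP(\Delta\varphi)}\ABS{\log s_n(A_n-z)}\,dz = o(n)$ with high probability, which is exactly where the quantitative smallest-singular-value estimate enters. Combining, Fubini and a standard subsequence/Markov argument yield $\int\Delta\varphi\,g_n\to\int\Delta\varphi\,g$ in probability, hence $\mu_{A_n}\weak\mu$ in probability; finally, $\mu$ is a genuine probability measure because $U_{\mu}$ is the decreasing limit of logarithmic potentials and $\nu_z$ has the correct total mass, no mass escaping to infinity by the tightness already established.

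The main obstacle is the interchange of the limit in $n$ with the integral over $z$ in the presence of the logarithmic singularity: one must rule out that a non-negligible fraction of shifts $z$ produce anomalously small singular values $s_n(A_n-z)$ (or, after the rank-one correction, $s_{n-1}$), since $L$ and its shifts are genuinely non-invertible or nearly so. Concretely, the work is to prove that $\dP\big(s_{n-1}(M-z)\le n^{-B}\big)$ is small, uniformly enough in $z$ over a compact set, so that $z\mapsto\ABS{\log s_{n-1}(M-z)}$ is uniformly integrable in probability — this is precisely the content of hypothesis $(ii)$ in the lemma, and in the body of the paper it is where the Rudelson–Vershynin / Götze–Tikhomirov machinery and the assumption $n\si^2\gg(\log n)^6$ are used. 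Everything else — the deterministic Girko identity, the a.e.-in-$z$ convergence from $(i)$, the easy upper bound on $g_n$, and the passage to the Laplacian in $\cD'(\dR^2)$ — is routine given the two hypotheses; for the lemma itself, which only assumes $(i)$ and $(ii)$, the proof is a clean application of these two inputs as sketched, and I would simply cite \cite[lemma 4.3]{bordenave-chafai-changchun} for the detailed measure-theoretic bookkeeping.
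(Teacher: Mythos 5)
The paper does not prove this lemma at all; it simply cites \cite[lemma 4.3]{bordenave-chafai-changchun}, which is exactly what you do at the end of your proposal, so the approaches coincide. Your sketch of the underlying Girko hermitization argument — the identity $U_{\mu_{A_n}}(z)=-\int_0^\infty\log(t)\,d\nu_{A_n-z}(t)$, pointwise-in-$z$ convergence in probability of this potential from $(i)$ combined with the uniform integrability $(ii)$, and passage to $\cD'(\dR^2)$-convergence against $\Delta\varphi$ — is a fair summary of that reference's proof; you are also right to keep separate the general lemma (only $(i)$ and $(ii)$) from the verification of $(ii)$ for $M(n)$ via smallest singular value bounds, which is where the paper's real work lies.
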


Applied to our matrix $M (n)= \si^{-1} \sqrt{n}(L/ n+mI)$, the validity of
$(i)$ follows from theorem \ref{th:singvals} (convergence a.s.\ implies
convergence in probability). The proof of $(ii)$ is performed in the remainder
of this section using ideas developed by Tao and Vu in
\cite{tao-vu-cirlaw-bis} and by the authors in \cite{MR2892961}, together with
an analysis of the smallest singular value which follows closely the work of
G\"otze and Tikhomirov \cite{gotze-tikhomirov-new}. We are going to prove the
following theorem. 

\begin{theorem}[Uniform integrability]\label{th:unifint}
  Under the assumptions of theorem \ref{th:eigenvals}, there exists an
  increasing function $J:[0,\infty)\to[0,\infty)$ with $J(t)/t\to\infty$ as
  $t\to\infty$ such that for all $z\in\dC\backslash \La$,
  \[
  \lim_{t \to \infty} \limsup_{n \to \infty} %
  \dP\PAR{\int_0^\infty J(|\log(s)|)\,d\nu_{M(n)-z}(s) > t} = 0.
  \]
\end{theorem}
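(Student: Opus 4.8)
The plan is to establish the required uniform integrability of $\log(\cdot)$ by splitting the integral $\int_0^\infty J(|\log s|)\,d\nu_{M(n)-z}(s)$ into three regions: large singular values $s\geq t_0$, moderate singular values $n^{-B}\leq s\leq t_0$, and small singular values $s<n^{-B}$, for a suitable constant $B$ and cutoff $t_0$. For the large singular values, one uses the second-moment (tightness) bound already obtained in Section \ref{subsec:tightness}: $\dE\int s^2\,d\nu_{M-z}\leq C+|z|^2$, so choosing $J(x)$ to grow slower than $e^{2x}$ (e.g.\ $J(x)=x^{3/2}$ or $J(x)=x\log(1+x)$) makes the contribution of $\{s\geq t_0\}$ small uniformly in $n$ by Markov's inequality. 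For the moderate range, $|\log s|\leq B\log n$, so the contribution is at most $J(B\log n)$ times $\nu_{M-z}([n^{-B},t_0])\leq 1$; this does not directly help, so instead one controls the count of moderately small singular values. Here I would invoke the method of Tao--Vu and the authors' earlier work \cite{tao-vu-cirlaw-bis,cirmar}: a bound of the form $\#\{i: s_{n+1-i}(M-z)\leq \eta\}\leq C n\eta$ (a "no eigenvalue-accumulation near zero" estimate for the Hermitization), valid with high probability, obtained from the least-singular-value machinery applied to all sub-blocks, or more cheaply from the convergence $\nu_{M-z}\weak\nu_z$ of Theorem \ref{th:singvals} together with a quantitative rate near $0$. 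Summing dyadically $\sum_k J(k\log 2)\cdot (\text{mass in }[2^{-k-1},2^{-k}])$ then converges provided the mass decays geometrically, which it does down to scale $n^{-B}$.

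The genuinely hard part is the bottom range $s<n^{-B}$, i.e.\ controlling the smallest singular value $s_n(M-z)$ (and, because $M$ has a deterministic rank-one piece and $L$ itself is singular, also $s_{n-1}$). The plan is to prove a polynomial lower bound: for $z\in\dC\setminus\La$,
\[
\dP\big(s_n(M-z)\leq n^{-B}\big)=o(1)
\]
for $B$ large enough depending only on absolute constants. This is where assumption (i), the variance growth $n\si^2\gg(\log n)^6$, and assumption (iii), that $\La$ has zero Lebesgue measure, enter. I would follow G\"otze--Tikhomirov \cite{gotze-tikhomirov-new} combined with Rudelson--Vershynin \cite{MR2407948}: decompose the unit sphere into compressible and incompressible vectors; for compressible vectors a net argument plus a small-ball/anti-concentration estimate (using the truncation to $|X_{ij}|\leq\veps\si\sqrt n$ and the Lindeberg condition, via the concentration-function bounds in the appendix) handles $\inf_{\text{comp}}\|(M-z)v\|$; for incompressible vectors one uses the "distance to a random hyperplane" bound, $s_n(M-z)\geq n^{-1/2}\min_i \DIST(R_i, H_i)$ where $R_i$ is the $i$-th row of $M-z$ and $H_i$ the span of the others, and then lower-bounds each distance by an anti-concentration argument applied to the random normal vector of $H_i$. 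The extra difficulty over the i.i.d.\ case is that the rows of $L$ (hence of $M-z$) are \emph{not} independent entrywise — each row has its diagonal entry equal to minus the sum of the off-diagonal ones — so the distance bound must be carried out conditionally on the off-diagonal entries of the row, treating the diagonal as a determined (but, crucially, still "spread out") coordinate; this is exactly the "new arguments needed to handle the non-independence" alluded to in the introduction.

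Once the three ranges are controlled, one assembles them: choose $J$ increasing with $J(t)/t\to\infty$ but $J(t)\leq e^{t}$, say $J(t)=t\log(e+t)$; the large-$s$ part is $O(t_0^{-1/2})$-type small by tightness, the moderate part is $O(1)$ and in fact $o_t(1)$ after the dyadic sum using the $Cn\eta$ count, and the small-$s$ part contributes, on the complement of the bad event, at most $J(B\log n)\cdot\tfrac1n\cdot(\text{number of singular values below }n^{-B})$, which is $o(1)$ since that number is $O(1)$ with high probability and $J(B\log n)/n\to 0$. On the bad event (probability $o(1)$) we use the crude deterministic bound that $\int_0^\infty J(|\log s|)\,d\nu_{M-z}$ is, after intersecting with the event $\|M-z\|\leq K\sqrt n$ (which holds whp by the operator-norm estimates leading to Theorem \ref{th:support}/\ref{th:support2}), at most polynomial in $n$, so its contribution to the probability in the statement still vanishes. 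Taking $t\to\infty$ after $n\to\infty$ then gives the claim. The main obstacle, to reiterate, is the conditional least-singular-value bound for the Laplacian-structured matrix $M-z$ in the sparse regime $n\si^2\gg(\log n)^6$; everything else is bookkeeping built on Theorem \ref{th:singvals} and the appendix's small-ball estimates.
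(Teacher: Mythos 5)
Your overall decomposition (large/moderate/tiny singular values, tightness for the top, Tao--Vu distance-to-hyperplane for the middle, Rudelson--Vershynin compressible/incompressible for the bottom) matches the structure of the paper's proof, and your choice $J(t)=t\log(e+t)$ would in fact suffice. But two of the concrete claims you rely on are not available in the regime $n\si^2\gg(\log n)^6$, and this is precisely where the power $6$ comes from. First, the polynomial bound $\dP(s_n(M-z)\leq n^{-B})=o(1)$ is too strong: in the sparse regime the compressible/incompressible dichotomy does not close with a single choice of $(\de,\rho)$, and the paper only proves the quasi-polynomial bound $\dP\big(s_n(M-z)\geq e^{-(\log n)^2}\big)\to 1$ (Proposition~\ref{prop:small}). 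Achieving even this requires a genuinely new ingredient beyond what you describe, namely the hierarchical partition of the sphere into roughly $N\sim\log n/\log\log n$ shells $C_\ell=\COMP(\de_{\ell+1},\rho_{\ell+1})\cap\INCOMP(\de_\ell,\rho_\ell)$ with geometrically related parameters, a net argument at each scale, and a union bound whose success hinges on $n\si^2\gg(\log n)^3$; the polynomial bound you propose is only obtained in the much milder sub-case $\si^2\geq n^{-1/4+\veps}$.

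Second, and consequently, your ``count'' argument for the bottom range would not close. The Tao--Vu estimate yields $s_{n-i}(M-z)\geq c_0\,i/n$ only for $i\geq u(n)$, where the union bound forces $u(n)=n/(\log n)^5$ under \eqref{eq:hyp_sig2} (so that $\min\{\si^2\psi(n),\psi(n)^2/n\}\gg\log n$ in Lemma~\ref{le:concdist}); there is no geometric decay of $\nu_{M-z}([2^{-k-1},2^{-k}])$ below scale $\sim(\log n)^{-5}$, and hence the number of singular values below $n^{-B}$ is not $O(1)$ but potentially of order $n/(\log n)^5$. The paper resolves this not by counting but by using the per-index quasi-polynomial lower bound to write, on the good event, $\frac1n\sum_{i<u(n)}J(|\log s_{n-i}|)\leq\frac{u(n)}{n}J((\log n)^2)=(\log n)^{-5}J((\log n)^2)$, which tends to zero for $J(t)=t^2$ (and would for your $J$ as well). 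So the assembly works, but only once you replace the polynomial small-ball claim by the quasi-polynomial one and abandon the geometric-count argument below scale $(\log n)^{-5}$ in favor of the crude per-index bound. One further small point: you do not actually need to separately control $s_{n-1}$; the degeneracy of $L$ is absorbed into the assumption $z\notin\La$, which guarantees $\al=m\sqrt n/\si-z$ stays bounded away from zero, and this enters the distance estimate (Lemma~\ref{le:W_k,H_k}) via a factor of $|\al|^{-1}$.
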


From the de La Vall\'ee Poussin criterion for uniform integrability, theorem
\ref{th:unifint} implies point $(ii)$ above, i.e.\ the uniform integrability
in probability of $\log(\cdot)$ for $(\nu_{M(n)-z})_{n\geq1}$, see
\cite{bordenave-chafai-changchun}. Therefore, it implies theorem
\ref{th:eigenvals}. The proof of theorem \ref{th:unifint} is divided into
three steps corresponding to the control of the large singular values, of the
moderately small singular values and of the smallest singular value of $M(n) -
z$.

\subsection{Large singular values}

\begin{lemma}[Tightness]\label{le:large} %
  For all $z \in \dC$, there exists a constant $C >0 $ uniform on bounded sets
  in $z$, such that for any $n \geq 1$ and any $t > 0$,
  \[
  \dP \PAR{\int\!x^{2}\,d\nu_{M(n)-z}(x) > t } \leq C\, t^{-1}.
  \]
  In particular, for any $u>0$,
  \begin{equation}\label{s1bound}
  \dP ( \| M(n) - z \| \geq u  ) \leq C \,u^{-2}n.
  \end{equation}
\end{lemma}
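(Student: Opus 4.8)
The plan is to bound the second moment $\int x^2 \, d\nu_{M(n)-z}(x)$ in expectation by a constant and then apply Markov's inequality. First I would write
\[
\int x^2 \, d\nu_{M(n)-z}(x) = \frac{1}{n}\sum_{k=1}^n s_k(M(n)-z)^2 = \frac{1}{n}\,\TR\bigl((M(n)-z)(M(n)-z)^*\bigr) = \frac{1}{n}\sum_{1\le i,j\le n}|M(n)_{ij} - z\,\delta_{ij}|^2.
\]
Splitting the diagonal and off-diagonal contributions, and using $|a-b|^2 \le 2|a|^2 + 2|b|^2$, this is bounded by a constant times
\[
\frac{1}{n}\sum_{i\ne j}|M(n)_{ij}|^2 + \frac{1}{n}\sum_{i}|M(n)_{ii}|^2 + |z|^2.
\]
Now $M(n)_{ij} = X_{ij}/(\si\sqrt n)$ for $i\ne j$, so $\dE\bigl[\tfrac1n\sum_{i\ne j}|M(n)_{ij}|^2\bigr] = \tfrac{1}{n^2\si^2}\sum_{i\ne j}\dE|X_{ij}|^2 \le 1 + o(1)$ using $\dE|X_{11}-m|^2 = \si^2$ and $\sup_n \si^2 < \infty$ together with the bound on $\dE|X_{11}|^2/\si^2$ (or, equivalently, the normalization \eqref{eq:propX11} after the reductions of Section~\ref{se:proof:th:singvals}). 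For the diagonal term, $M(n)_{ii} = -\tfrac{1}{\si\sqrt n}\sum_{j\ne i}(X_{ij}-m) - \sqrt n\, m/\si \cdot$ (up to the centering/shift already absorbed into $M$), so after centering $\dE|M(n)_{ii}|^2 = \tfrac{1}{n\si^2}\sum_{j\ne i}\VAR(X_{ij}) \le 1$. Hence $\dE\int x^2\,d\nu_{M(n)-z}(x) \le C$ for some constant $C = C(z)$ depending only on $z$ (through $|z|^2$) and the uniform variance bounds. Markov's inequality then gives $\dP(\int x^2\,d\nu_{M(n)-z}(x) > t) \le C/t$, which is the first claim. This computation is essentially the tightness estimate of Section~\ref{subsec:tightness}, just made quantitative.

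For the second claim, observe that $\|M(n)-z\| = s_1(M(n)-z)$ and
\[
s_1(M(n)-z)^2 \le \sum_{k=1}^n s_k(M(n)-z)^2 = n\int x^2\,d\nu_{M(n)-z}(x),
\]
so $\{\|M(n)-z\| \ge u\} \subseteq \{\int x^2\,d\nu_{M(n)-z}(x) \ge u^2/n\}$, and the first part with $t = u^2/n$ yields $\dP(\|M(n)-z\|\ge u) \le Cn/u^2$, which is \eqref{s1bound}.

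I do not expect any serious obstacle here: the statement is a soft second-moment bound and the only mild point to be careful about is tracking the centering and the $\sqrt n\,m/\si$ shift in the diagonal entries of $M$, which is harmless because either one works under the reduction \eqref{eq:propX11} (where $\dE X_{ij}=0$) or one simply notes that the deterministic rank-one shift changes $s_1$ by at most $\|\,\si^{-1}n^{-1/2}mJ\| = \sqrt n\,|m|/\si$, a quantity of the same order $\sqrt n$ that is already built into the bound's $n$-dependence. The bound is deliberately lossy (the factor $n$ in \eqref{s1bound} is far from the true $\cO(1)$ size of $\|M(n)-z\|$), but it is all that is needed as an a~priori input for the uniform integrability argument: it lets one discard the range of very large singular values in Theorem~\ref{th:unifint} with room to spare.
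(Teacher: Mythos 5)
Your argument is essentially the paper's: bound $\dE\int x^2\,d\nu_{M(n)-z}=\cO(1)$ by summing the entrywise second moments of $M(n)-zI$, apply Markov's inequality, then deduce the operator-norm tail bound from $\|M(n)-z\|^2\le n\int x^2\,d\nu_{M(n)-z}$, exactly as in the text. The only differences are cosmetic: you absorb the $-zI$ shift via $|a-b|^2\le 2|a|^2+2|b|^2$ where the paper cites the Weyl bound from Section~\ref{subsec:tightness}, and your parenthetical description of $M(n)_{ii}$ has a dangling factor, though the conclusion $\dE|M(n)_{ii}|^2=\cO(1)$ you draw from it is correct.
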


\begin{proof} As in Section \ref{subsec:tightness}, we find
  \begin{align*}
    \int\!x^{2}\,d\nu_{M - z}(x)
    &  \leq  \int\!x^{2}\,d\nu_{M}(x)  + |z|^2 \\
    & = \frac{1}{\si^2 n^2} \sum_{i \ne j} |X_{ij} |^2 + \frac{1}{\si^2
      n^2} \sum_{i} \Big|\sum_{j \ne i } ( X_{ij} - m) \Big|^2 + |z|^2.
  \end{align*}  
  Since $\dE |X_{ij} |^2=\cO(\si^2)$ by \eqref{eq:hyp_tightness1}, taking
  expectation, we find
  \[
  \dE \int\!x^{2}\,d\nu_{M - z}(x)  = \cO(1).
  \]
  It remains to apply Markov's inequality. The bound \eqref{s1bound} now follows
  from $ \| M(n) - z \|^2\leq n\int\!x^{2}\,d\nu_{M(n)-z}(x)$.
\end{proof}

\subsection{Smallest singular value}
A crucial step towards the proof of theorem \ref{th:unifint} is a lower bound,
in probability, on the smallest singular value $s_{n} ( M - z )$. Here, our
main result is a quasi polynomial lower bound.
  
\begin{proposition}[Smallest singular value]\label{prop:small}
  Under the assumptions of theorem \ref{th:eigenvals}, for all $z \in \dC
  \backslash \La$,
  \begin{equation}\label{quasipol}
    \lim_{n\to\infty}\dP(s_{n} ( M  - z   ) \geq e^{-(\log(n))^2})=1.
  \end{equation}
\end{proposition}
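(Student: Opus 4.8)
The plan is to establish a lower bound on the smallest singular value $s_n(M-z)$ of the shifted matrix by adapting the by-now standard machinery of Rudelson--Vershynin and G\"otze--Tikhomirov to the present setting, with the extra twist that the rows of $M-z$ are not independent of each other in the way they are for i.i.d.\ matrices: each row of $M$ carries the diagonal term $-D_{ii}/(\si\sqrt n)$ which is a sum of the off-diagonal entries in that row. However, the key structural fact we exploit is that the rows of $M-z$ \emph{are} mutually independent (the $i$-th row depends only on $(X_{ij})_{j}$), so the geometric approach via distances to random hyperplanes still applies. Write $s_n(M-z) = \min_{\|u\|=1}\|(M-z)u\|$ and, as usual, split the sphere into compressible and incompressible vectors. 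For compressible (almost sparse) vectors one uses a net argument together with the operator-norm bound \eqref{s1bound} from Lemma \ref{le:large} and a small-ball estimate for $\|(M-z)u\|$ valid uniformly over a net of sparse unit vectors; here the truncation/centralization already performed in Section \ref{se:proof:th:singvals} (entries bounded by $\kappa(n)=o(\si\sqrt n)$, variance $\si'^2=\si^2(1+o(1))$) lets us apply the Lindeberg-type small-ball bounds from the appendix on concentration functions. The condition \eqref{eq:hyp_sig2}, $n\si^2 \gg (\log n)^6$, enters precisely to make the net cardinality versus small-ball probability tradeoff work with room to spare for a quasi-polynomial, rather than merely logarithmic, conclusion.

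For incompressible vectors one uses the standard reduction (Rudelson--Vershynin): for an incompressible unit vector $u$,
\[
\|(M-z)u\| \;\gtrsim\; \frac{1}{\sqrt n}\,\min_{i}\,\DIST\!\big(R_i,\ \SPAN(R_k: k\neq i)\big),
\]
where $R_i$ denotes the $i$-th row of $M-z$. Since the rows are independent, conditionally on $(R_k)_{k\neq i}$ the hyperplane $H_i=\SPAN(R_k:k\neq i)$ is fixed and $\DIST(R_i,H_i) = |\langle R_i, \nu_i\rangle|$ for a unit normal $\nu_i$ measurable with respect to the other rows. The task is then a lower bound on the small-ball probability $\sup_{v} \dP(|\langle R_i, v\rangle|\le t)$ that is uniform over unit vectors $v$ --- but here one must be careful because $R_i$ has a special coordinate (the diagonal entry $-\sum_{j\neq i}X_{ij}/(\si\sqrt n) - z$) which is strongly correlated with the other coordinates of $R_i$. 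The way around this is to condition on the diagonal entry (equivalently, to first expose $D_{ii}$) and work with the remaining off-diagonal entries, which are i.i.d.; alternatively one observes that $\langle R_i,v\rangle = \si^{-1}n^{-1/2}\sum_{j\neq i}X_{ij}(v_j - v_i) - (z + \cdots)v_i$, a linear combination of the \emph{independent} variables $(X_{ij})_{j\neq i}$ with coefficients $v_j-v_i$, so that the usual L\'evy concentration bound applies as soon as the coefficient vector $(v_j-v_i)_j$ is not too concentrated --- which is exactly what incompressibility of $\nu_i$ guarantees (this is where $z\notin\La$ is used, to rule out the degenerate direction in which all $v_j-v_i$ nearly vanish, i.e.\ $v$ nearly proportional to $(1,\dots,1)$, which would make $\langle R_i,v\rangle$ behave like the trivial kernel direction of $L$). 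One then needs the corresponding statement that $\nu_i$ is incompressible with high probability: this is the invertibility-of-a-random-minor step, handled by a bootstrap/union bound using the compressible-vector estimate applied to the $(n-1)\times(n-1)$ minor.

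The main obstacle --- and the reason the proposition gives only the quasi-polynomial bound $e^{-(\log n)^2}$ rather than the polynomial bound $n^{-C}$ available in the i.i.d.\ case --- is the sparsity: when $\si(n)\to 0$ the entries $X_{ij}$ are, with probability $\approx 1-p(n)$, exactly zero, so a row $R_i$ can have only $\sim np$ nonzero off-diagonal entries, and the coefficient vector $(v_j-v_i)_j$ effectively gets multiplied by a sparse Bernoulli mask. This forces the small-ball bound to deteriorate, and to control $\DIST(R_i,H_i)$ from below uniformly over all $n$ rows one must absorb a union bound over $i$ and a net over directions; the available slack is governed by $\exp(-c\,n\si^2/\text{polylog})$, and under \eqref{eq:hyp_sig2} this is $o(1)$ only if one is willing to lose a $(\log n)^2$ in the exponent of the singular-value bound. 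Concretely, the delicate estimate is the one controlling the probability that many entries of the normal vector $\nu_i$ are atypically small together with the event that the sparse mask kills the surviving large entries --- this is exactly the kind of estimate provided by G\"otze--Tikhomirov \cite{gotze-tikhomirov-new} for sparse i.i.d.\ matrices, and the bulk of the work is to check that their argument survives the replacement of i.i.d.\ rows by our independent-but-internally-correlated rows, using the conditioning-on-$D_{ii}$ trick above. Once \eqref{quasipol} is in hand, combining it with Lemma \ref{le:large} and an intermediate-singular-value bound (via the convergence $\nu_{M-z}\weak\nu_z$ from Theorem \ref{th:singvals}, which controls the count of singular values below any fixed threshold) yields the uniform integrability in Theorem \ref{th:unifint}, and hence Theorem \ref{th:eigenvals}.
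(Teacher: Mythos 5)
Your outline follows the same overall architecture as the paper's proof: split the sphere into compressible and incompressible vectors, handle compressible vectors by a net argument using a distance-to-subspace concentration bound, reduce the incompressible case to distances of rows to spans of other rows, note that $\ANG{R_i,v}$ is a linear form in the i.i.d.\ off-diagonal entries with coefficients $v_j-v_i$, apply Hal\'asz-type small-ball estimates, and use the multi-scale G\"otze--Tikhomirov partition of incompressible vectors to survive the sparse regime with a quasi-polynomial bound. The role of $z\notin\La$ and the fact that $\si^{1/2}n^{1/4}\log n$-type losses force the power $6$ in \eqref{eq:hyp_sig2} are also correctly located. Incidentally, the "conditioning on $D_{ii}$" alternative you mention is \emph{not} what the paper uses; it uses precisely the second route you describe, rewriting $\ANG{R_i,v}$ in terms of the off-diagonal entries.

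The genuine gap is in the step where you assert that "incompressibility of $\nu_i$ guarantees" that the effective coefficient vector $(\nu_{i,j}-\nu_{i,i})_j$ is spread out. That implication does not hold deterministically: if $\zeta$ is an incompressible unit vector, the shifted-and-normalized vector $(\zeta - \zeta_k\phi)/\NRM{\zeta-\zeta_k\phi}$ (with $\phi=(1,\dots,1)^\top$) can perfectly well become compressible --- the shift and the renormalization can concentrate mass on few coordinates, and when $\zeta$ is close to $\Phi=\SPAN\{\phi\}$ the denominator $\NRM{\zeta-\zeta_k\phi}$ is small, which both hurts the final bound and amplifies any residual sparse structure. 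The paper has to address this with two dedicated lemmas that your sketch elides. First (lemma~\ref{le:zetaphi}) a \emph{deterministic} lower bound $\DIST(\zeta,\Phi)\geq |\al|\si\sqrt n/(2 s_1(Y))$, proved from the exact eigenvector identity $Y\phi = \al\si\sqrt n\,\phi$ with $\al = m\sqrt n/\si - z$ --- this is the precise place where $z\notin\La$ enters, and it is specific to the Markov generator structure (the matrix $B$ obtained by zeroing one row of $Y$ has kernel containing $\bar\zeta$, and $\phi$ is provably far from that kernel because $\|B\hat\phi\|$ is of order $|\al|\si\sqrt n$). Second (lemma~\ref{le:normalincomp}) a \emph{probabilistic} statement that, with probability $\geq 1 - e^{-\veps \si^2 n}$ on the event $s_1(Y)\leq s(n)$, the normalized vector $\eta(\la)=(\zeta-\la\phi)/\NRM{\zeta-\la\phi}$ is incompressible \emph{uniformly over all $\la\in\dC$}, which is proved by a further compressible-vector argument for $\Pi Y$ where $\Pi$ projects onto $(\SPAN\{\phi,e_k\})^\perp$. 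Without these two inputs the small-ball estimate for $|\ANG{\eta,R_k}+w_k|$ cannot be applied, and your phrase "handled by a bootstrap/union bound" does not capture the extra $\Phi$-projection that makes the minor-invertibility step work here.
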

The proof of Proposition \ref{prop:small} follows closely the strategy
developed in \cite{gotze-tikhomirov-new}, in turn inspired by the works
\cite{MR2407948,MR2146352}. However, some crucial modifications are needed due
to the extra terms coming form the diagonal matrix $D$. If one assumes that
$\si^2(n)\geq n^{-1/4+\veps}$ for some $\veps>0$, then the quasi polynomial
$e^{-(\log(n))^2}$ can be replaced by $n^{-4}$ in \eqref{quasipol}, see Section
\ref{proof:small} below. Before we start the proof of Proposition
\ref{prop:small}, we collect some important preliminaries.

\subsubsection{Distance of a random vector to a subspace}
The following lemma is valid under the sole assumption
\eqref{eq:hyp_tightness1}. For the proof we adapt the argument of
\cite[Proposition 5.1]{tao-vu-cirlaw-bis}, see also \cite[Appendix
A]{bordenave-chafai-changchun}, but some extra care is needed to handle the
sparse regime $\si^2\to 0$. Remark that condition \eqref{eq:hyp_tightness1}
implies that there exists $a >0$ such that
\begin{equation} \label{eq:hyp_tightness} %
  \inf_{T\geq a}\inf_{n \geq 1}\si^{-2}\VAR(\bx^{(T)} ) >0,
\end{equation}
where $\bx^{(T)}$ is the random variable $\bx$ conditioned on $|\bx|\leq T$. 
Indeed, let $\epsilon(n,T)\geq 0$ denote a generic sequence such that 
$\lim_{T\to\infty}\epsilon(n,T)=0$ uniformly in $n$. Then, 
from the first display in \eqref{eq:hyp_tightness1}, one has $\dE[|\bx^{(T)}|^2]\geq (1-\epsilon(n,T))\dE[|\bx|^2]$. Schwarz' inequality and the second display in 
\eqref{eq:hyp_tightness1} also imply that 
$\dE[|\bx|1_{|\bx|>T}]\leq \epsilon(n,T)\si$. In conclusion, $\dE[|\bx^{(T)}|^2] - |\dE[\bx^{(T)}]|^2\geq (1-\epsilon(n,T))\si^2$ for some $\epsilon(n,T)$ as above. This proves 
\eqref{eq:hyp_tightness}.

\begin{lemma}\label{le:concdist}
  Let $R:=(X_{i1},\ldots,X_{in})$ denote a row of the matrix $X$ and assume
  \eqref{eq:hyp_tightness1}. Let $\psi:\dN\to\dN$ be such that
  $\psi(n)\to\infty$,
  and $\psi(n)<n$.
  There exists $\veps>0$ such that for any subspace $H$ of $\dC^n$
  with $1 \leq \DIM(H) \leq n - \psi(n)$, one has
  \begin{equation}\label{lemmsi1}
    \dP\PAR{\DIST(R,H) \leq \veps\si\sqrt{n-\DIM(H)}} %
    \leq e^{-\veps   \si^2  \psi(n)} + e^{-\veps   \psi(n)^2  / n }.
  \end{equation}
\end{lemma}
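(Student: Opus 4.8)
The plan is to reduce the bound \eqref{lemmsi1} to a statement about the distance from $R$ to a fixed subspace, and then to exploit the fact that, after conditioning, the entries of $R$ have a uniformly positive variance, so that $\DIST(R,H)^2$ concentrates around a quantity of order $\si^2(n-\DIM(H))$. Concretely, write $P_{H^\perp}$ for the orthogonal projection onto $H^\perp$, so that $\DIST(R,H)^2 = \NRM{P_{H^\perp}R}^2 = \sum_{k,\ell}(P_{H^\perp})_{k\ell}X_{i\ell}\overline{X_{ik}}$. The first step is a truncation: fix $T\geq a$ large, replace each $X_{i\ell}$ by $X_{i\ell}^{(T)}:=X_{i\ell}\IND_{\{|X_{i\ell}|\leq T\}}$ (and then recenter), and control the probability that $R$ differs from its truncated version using \eqref{eq:hyp_tightness1}; since we only need a lower bound on $\DIST(R,H)$ that holds with the stated exponentially small failure probability, it suffices to show $\DIST(\wt R,H)\geq 2\veps\si\sqrt{n-\DIM(H)}$ for the truncated-recentered vector $\wt R$, together with a crude deterministic bound on how much un-truncated atoms can move $R$.

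For the truncated vector, I would compute $\dE[\DIST(\wt R,H)^2] = \sigma_T^2\TR(P_{H^\perp}) = \sigma_T^2(n-\DIM(H))$ where $\sigma_T^2 = \VAR(\bx^{(T)})$, which by \eqref{eq:hyp_tightness} is at least $c\si^2$ for a universal $c>0$ once $T\geq a$. The heart of the argument is then a lower-tail concentration inequality: the random variable $\DIST(\wt R,H)^2$ is a quadratic form in the bounded independent variables $(\wt X_{i\ell})_\ell$, with coefficient matrix a projection (hence of operator norm $1$ and Hilbert–Schmidt norm $\sqrt{n-\DIM(H)}$). Applying the Hanson–Wright inequality for bounded independent variables — or, what is cleaner here, Talagrand's convex-distance concentration applied to the $1$-Lipschitz convex function $R\mapsto\DIST(R,H)$ on the polydisc $\{|z_\ell|\leq T\}^n$ — one gets, with $\Delta := \DIM(H)$,
\[
\dP\PAR{\DIST(\wt R,H)\leq \tfrac12\,\si_T\sqrt{n-\Delta}}\leq C\exp\PAR{-c\,\frac{\si_T^2(n-\Delta)}{T^2}}.
\]
Since $n-\Delta\geq\psi(n)$ and $\si_T^2\geq c\si^2$, the right side is at most $C\exp(-c'\si^2\psi(n)/T^2)$, which absorbs into the first term $e^{-\veps\si^2\psi(n)}$ of \eqref{lemmsi1} for a suitable small $\veps$ (and suitable fixed $T$).

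The remaining term $e^{-\veps\psi(n)^2/n}$ is what accounts for the (rare) rows in which many entries exceed the truncation level $T$: if $N_T$ denotes the number of indices $\ell$ with $|X_{i\ell}|>T$, then $\dE N_T = n\dP(|\bx|>T)$, and the contribution of these coordinates can shift $\DIST(R,H)$ by at most (something like) $\sqrt{N_T}\cdot\max_\ell|X_{i\ell}|$ on the bad event, or more simply one argues that on the event $\{N_T\leq \delta(n-\Delta)\}$ the truncated bound already controls $R$ up to constants, while $\dP(N_T> \delta(n-\Delta))$ is handled by a binomial tail (Chernoff) bound, giving a factor $e^{-c\,\delta(n-\Delta)\log(1/\dP(|\bx|>T))}$; crucially $n-\Delta\geq\psi(n)$ but we also need room coming from $\psi(n)<n$, and carrying the Chernoff estimate through produces exactly a term of the shape $e^{-\veps\psi(n)^2/n}$ once one balances $\delta$ against $\psi(n)/n$. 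I expect the main obstacle to be precisely this bookkeeping for the non-truncated part: getting the two different exponential rates $\si^2\psi(n)$ and $\psi(n)^2/n$ to come out with a single $\veps$ requires choosing the truncation level $T$, the fraction $\delta$, and the constant $\veps$ in the right order, and being careful that the subspace $H$ is arbitrary (so no independence between $P_{H^\perp}$ and $R$ is available — but $H$ is deterministic, so this is fine). Everything else — the truncation step, the expectation computation, and the convex/Hanson–Wright concentration — is standard, and closely parallels \cite[Proposition 5.1]{tao-vu-cirlaw-bis} and \cite[Appendix A]{bordenave-chafai-changchun}, the only genuine novelty being the uniform variance lower bound \eqref{eq:hyp_tightness} that lets the whole scheme survive the sparse regime $\si^2(n)\to0$.
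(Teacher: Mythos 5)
Your proposal has the same architecture as the paper's proof — truncate the entries, bound the number of large entries by a binomial tail (giving one exponential rate), condition on those entries and use Talagrand's convex concentration on the bounded remainder (giving the other) — so the plan is sound. But there are two places where your write-up goes wrong in ways that matter.

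First, you treat the truncation level $T$ as a fixed large constant and conclude that the Talagrand bound $\exp(-c\,\si_T^2(n-\Delta)/T^2)$ ``absorbs into the first term $e^{-\veps\si^2\psi(n)}$.'' This cannot work in general. For the binomial count $N_T$ of entries exceeding $T$ (mean $\leq n\si^2/T^2$) to stay below $c\psi(n)$ with the needed probability, you are forced to take $T^2\gtrsim n\si^2/\psi(n)$, which typically grows with $n$ (e.g.\ when $\si$ is order one and $\psi(n)=o(n)$). With that choice the Talagrand exponent $\si^2\psi(n)/T^2$ collapses to $\psi(n)^2/n$, i.e.\ the \emph{second} term, not the first. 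The paper fixes $T=\max\{2\si\sqrt{n/(c\psi(n))},a\}$ and then does an explicit dichotomy: if $T$ stays bounded the Talagrand step gives $e^{-\veps\si^2\psi(n)}$; if $T\to\infty$ it gives $e^{-\veps\psi(n)^2/n}$. Both branches are needed, and your proposal misattributes which mechanism produces which rate.

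Second, the deterministic bound ``the contribution of these coordinates can shift $\DIST(R,H)$ by at most $\sqrt{N_T}\cdot\max_\ell|X_{i\ell}|$'' is not usable: $\max_\ell|X_{i\ell}|$ is unbounded, so this does not give a workable comparison between $\DIST(R,H)$ and $\DIST(\wt R,H)$ on the good event. The fix, which you gesture at but don't carry out, is the paper's subspace-augmentation: condition on the (random) set $I^c$ of indices with $|X_{1j}|>T$ and on the values there, and replace $H$ by the larger subspace $H''=\SPAN\{H, e_j\,(j\in I^c)\}$. Then $\DIST(R,H)\geq\DIST(\wt R,H'')$ where $\wt R$ is the truncated, recentered row restricted to $I$, and on the event $N_T\leq c\psi(n)$ the codimension $n-\DIM(H'')$ is still $\geq(1-2c)\psi(n)$, so the Talagrand estimate applies cleanly. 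Without this step the un-truncated atoms are not actually controlled, and your argument has a hole exactly where you flagged ``the main obstacle.''
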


\begin{proof}
  As in \cite[Proposition 5.1]{tao-vu-cirlaw-bis} we can assume that the
  random variables $X_{ij}$ are centered, since this amounts to replace $H$
  with $H'$, the linear span of $H$ and a deterministic one-dimensional space,
  satisfying $ \DIM(H')\leq n-\psi(n)+1$. Next, we truncate $X_{ij}$: Fix
  $T>0$ and use Chebyshev's inequality to bound $\dP(|X_{1j}|>T)\leq
  T^{-2}\si^2$. Let $E_n$ denote the event that $\sum_{j=1}^n\IND(|X_{1j}|>T))
  \geq c\psi(n)$, for some $c>0$ to be chosen later. We take $T$ such that
  $T^2\geq 4n\si^2/(c\,\psi(n))$. Then, from Hoeffding's inequality one has,
  \begin{equation}\label{psinq}
    \dP\PAR{E_n}\leq e^{- c^2  \psi(n)^2/n}.
  \end{equation}
  Thus, in proving \eqref{lemmsi1} we can now assume that the complementary
  event $E_n^c$ occurs. Set $T=\max\{2\si \sqrt{n/(c\psi(n))},a\}$, where
  $a>0$ is the parameter in \eqref{eq:hyp_tightness}. Conditioning on the set
  $I$ such that $|X_{1j}|\leq T$ iff $j\in I$, $|I|\geq n-c\,\psi(n)$, and
  conditioning on the values of $X_{1j}$ for $j\in I^c$, one can reduce the
  problem to estimating $ \DIST(R,H')$ by $\DIST(\tilde R,H'')$ where $\tilde
  R$ is the vector $\tilde X_{1j}$, $j=1,\dots,|I|$, made of i.i.d.\  copies of the
  centered variable $\bx^{(T)} - \dE\bx^{(T)}$, and $H''$ has dimension at
  most $\DIM(H')+|I^c| + 1\leq n-\psi(n)+2+c\,\psi(n)\leq n-(1-2c)\psi(n)$. A
  simple computation yields
  \[
  \dE\SBRA{\DIST(\tilde R,H'')^2} %
  =\tilde \si^2 (n-\DIM(H''))\,,\qquad \tilde \si^2 = \VAR( \bx^{(T)}).
  \]
  Using \eqref{eq:hyp_tightness}, one has $\tilde\si^2\geq c_1\si^2$ for
  some constant $c_1>0$. As in \cite{tao-vu-cirlaw-bis} we may now invoke
  Talagrand's concentration inequality for Lipschitz convex functions of
  bounded independent variables. Using $n-\DIM(H'')\geq (1-2c)\psi(n)$, if $c$
  is sufficiently small this implies that for some $\veps>0$ one has
  \begin{equation}\label{lemmsi10}
    \dP\PAR{\DIST(\tilde R,H'') \leq \veps\si\sqrt{n-\DIM(H)}} %
    \leq \exp(-\veps \si^2T^{-2} \psi(n)).
  \end{equation}
  The expression above is then an upper bound for the probability of the
  event \[E_n^c\cap\big\{\DIST(R,H) \leq \veps\si\sqrt
  {n-\DIM(H)}\big\}.\] 
  Therefore if $T$ is bounded one has the upper bound
  $e^{-\veps'\si^2\psi(n)}$, for some $\veps'>0$, while if $T\to\infty$, then
  $\si^2T^{-2}=c\psi(n)/(4n)$ and one has the upper bound $e^{-\veps'
    \psi(n)^2/n}$. This ends the proof of \eqref{lemmsi1}. 
\end{proof}

\subsubsection{Compressible and incompressible vectors}
For $\delta \in( 0,1)$, define the set of \emph{sparse vectors}
\[
\SPARSE(\de):=
\{x\in\dC^n:\;|\SUPP(x)|\leq \de n\}
\]
where $\SUPP(x)=\{i:\,x_i\neq 0\}$ and $|\SUPP(x)|$ is its cardinality. Given
$\rho\in( 0,1)$, consider the partition of the unit sphere $\dS^{n-1}$ into a
set of \emph{compressible vectors} and the complementary set of
\emph{incompressible vectors} as follows:
\begin{gather*}
  \COMP(\de,\rho):= \{x\in\dS^{n-1}:\DIST(x,\SPARSE(\de))\leq\rho\}
  \\
  \INCOMP(\de,\rho):= \dS^{n-1}\setminus\COMP(\de,\rho).
\end{gather*}
For any matrix  $A \in \cM_n (\dC)$:
\begin{equation}\label{eq:decompsn}
  s_n ( A) = \min_{ x \in \dS^{n-1}} \NRM{A x}_2 %
  = \min\PAR{\min_{x\in\COMP(\de,\rho)}\NRM{A x}_2,\min_{x\in\INCOMP(\de,\rho)}\NRM{Ax}_2}.
\end{equation}
We will apply \eqref{eq:decompsn} to $A=Y^\top$, the transpose of the matrix 
\begin{equation}\label{matrY}
  Y=\si\sqrt{n}(M(n)-z),
\end{equation} 
and then use the obvious identities
$s_i(Y^\top)=s_i(Y)=\si\sqrt{n}\,s_i(M-z)$, $i=1,\dots, n$.

Next, we recall two lemmas from \cite{MR2407948}; see also \cite[Appendix
A]{bordenave-chafai-changchun}.

\begin{lemma}
  \label{le:incompspread}
  Let $x \in \INCOMP(\de,\rho)$. There exists a subset $\pi \subset \{1, \ldots,
  n\}$ such that $|\pi | \geq \de n / 2$, and for all $i \in \pi$,
  \[
  \frac{\rho}{\sqrt{n}} \leq |x_i|\leq \sqrt{\frac{2}{\delta n}}.
  \]
\end{lemma}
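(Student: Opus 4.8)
The plan is to construct $\pi$ as the intersection of two index sets, one witnessing the upper bound and one the lower bound, and to check that each complement occupies only a small fraction of $\{1,\dots,n\}$. Throughout we use only that $\|x\|_2=1$ and that $x\in\INCOMP(\de,\rho)$.

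First I would handle the upper bound. Set $\pi_1:=\{i:\ |x_i|\leq\sqrt{2/(\de n)}\}$. Since $\sum_i|x_i|^2=1$, applying Markov's inequality to the discrete measure that assigns mass $|x_i|^2$ to the index $i$ yields
\[
|\pi_1^c|=\#\{i:\ |x_i|^2>2/(\de n)\}<\de n/2 .
\]
Next the lower bound. Set $\pi_2:=\{i:\ |x_i|\geq \rho/\sqrt n\}$, and I claim $|\pi_2|\geq \de n$. Suppose not, i.e.\ $|\pi_2|<\de n$, and let $x_{\pi_2}$ be the vector obtained from $x$ by zeroing every coordinate outside $\pi_2$. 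Then $x_{\pi_2}\in\SPARSE(\de)$, while
\[
\NRM{x-x_{\pi_2}}_2^2=\sum_{i\notin\pi_2}|x_i|^2< n\cdot\frac{\rho^2}{n}=\rho^2 ,
\]
so $\DIST(x,\SPARSE(\de))<\rho$, contradicting $x\in\INCOMP(\de,\rho)$. Hence $|\pi_2|\geq \de n$, that is $|\pi_2^c|\leq n-\de n$.

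Finally take $\pi:=\pi_1\cap\pi_2$. By construction every $i\in\pi$ satisfies $\rho/\sqrt n\leq|x_i|\leq\sqrt{2/(\de n)}$, and
\[
|\pi|=n-|\pi_1^c\cup\pi_2^c|\geq n-|\pi_1^c|-|\pi_2^c|\geq n-\tfrac{\de n}{2}-(n-\de n)=\tfrac{\de n}{2},
\]
which is the desired conclusion. The argument is elementary; the only step requiring a moment's care is the lower-bound claim, where the threshold $\rho/\sqrt n$ must be chosen so that discarding the sub-threshold coordinates costs strictly less than $\rho^2$ in squared $\ell^2$-norm while the retained set stays $\de n$-sparse — both of which hold precisely with the constants in the statement — so I do not anticipate any real obstacle.
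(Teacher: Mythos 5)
The paper does not prove this lemma itself but states it as recalled from Rudelson and Vershynin \cite{MR2407948}; your argument is precisely the standard one from that source — Markov's inequality applied to the weights $|x_i|^2$ (summing to $1$) controls the large coordinates, while zeroing the sub-threshold coordinates and invoking incompressibility controls the small ones, and a union bound on the complements gives the count. It is correct as written.
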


\begin{lemma}
  \label{le:invIncomp}
  Let $A\in \cM_{n} (\dC)$ be any random matrix and let $W_k$ denote its
  $k$-th column. For $1\leq k\leq n$, let $H_k=\SPAN\{W_j,\;j\neq k\}$. Then,
  for any $t \geq 0$,
  \[
  \dP\PAR{%
    \min_{ x \in \INCOMP(\de,\rho)} \NRM{Ax}_2 %
    \leq \frac{t \rho}{\sqrt{n}}} %
  \leq \frac{2}{\de n} \sum_{k=1}^n \dP \PAR{\DIST(W_k , H_k) \leq t}.
  \]
\end{lemma}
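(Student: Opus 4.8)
The plan is to prove this as the standard Rudelson--Vershynin ``invertibility via distance'' estimate, reducing it to a deterministic counting inequality holding for every realization of $A$ and then integrating. The one observation on which everything rests is the column expansion $Ax=\sum_{k=1}^n x_kW_k$: for any fixed $k$ the vector $\sum_{j\neq k}x_jW_j$ lies in $H_k=\SPAN\{W_j:j\neq k\}$, so that
\[
\NRM{Ax}_2 \;\geq\; \DIST(Ax,H_k) \;=\; \DIST(x_kW_k,H_k) \;=\; |x_k|\,\DIST(W_k,H_k),
\]
and this holds for every $x\in\dS^{n-1}$, every $k$, and every realization of $A$. I would record this first.

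Next I would exploit the spreading property of incompressible vectors. Fix a realization lying in the event $\cE_t:=\{\min_{x\in\INCOMP(\de,\rho)}\NRM{Ax}_2\leq t\rho/\sqrt n\}$ and pick a (near-)minimizer $x\in\INCOMP(\de,\rho)$ with $\NRM{Ax}_2\leq t\rho/\sqrt n$. Lemma \ref{le:incompspread} then furnishes a set $\pi\subset\{1,\dots,n\}$ with $|\pi|\geq\de n/2$ such that $|x_k|\geq\rho/\sqrt n$ for all $k\in\pi$. Combining this with the displayed lower bound gives, for each $k\in\pi$,
\[
\DIST(W_k,H_k)\;\leq\;\frac{\NRM{Ax}_2}{|x_k|}\;\leq\;\frac{t\rho/\sqrt n}{\rho/\sqrt n}\;=\;t ,
\]
so $\pi\subseteq\{k:\DIST(W_k,H_k)\leq t\}$ and hence $\sum_{k=1}^n\IND_{\{\DIST(W_k,H_k)\leq t\}}\geq|\pi|\geq\de n/2$ on $\cE_t$. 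This yields the pointwise domination $\IND_{\cE_t}\leq\frac{2}{\de n}\sum_{k=1}^n\IND_{\{\DIST(W_k,H_k)\leq t\}}$, and taking expectations gives exactly the asserted bound.

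The only step that needs a word of care is the attainment of the minimum over $\INCOMP(\de,\rho)$, which is merely relatively open in the compact sphere $\dS^{n-1}$, so an exact minimizer in $\INCOMP(\de,\rho)$ may fail to exist. I do not regard this as a genuine obstacle: one interprets $\min$ as $\inf$, runs the argument above with near-minimizers $x^{(m)}\in\INCOMP(\de,\rho)$ satisfying $\NRM{Ax^{(m)}}_2\leq t'\rho/\sqrt n$ for an arbitrary $t'>t$ to get $\#\{k:\DIST(W_k,H_k)\leq t'\}\geq\de n/2$ on $\cE_t$, and then lets $t'\downarrow t$, using $\bigcap_{t'>t}\{\DIST(W_k,H_k)\leq t'\}=\{\DIST(W_k,H_k)\leq t\}$ together with integrality of the count. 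All the real content is in the two ingredients above, namely the distance lower bound for $\NRM{Ax}_2$ and the spreading Lemma \ref{le:incompspread}.
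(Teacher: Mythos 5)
Your proof is correct and is precisely the classical Rudelson--Vershynin argument: the paper does not reproduce a proof of this lemma but simply recalls it from \cite{MR2407948} (and \cite[Appendix~A]{bordenave-chafai-changchun}), and your argument coincides with the one given there. The chain $\NRM{Ax}_2\geq\DIST(Ax,H_k)=\DIST(x_kW_k,H_k)=|x_k|\,\DIST(W_k,H_k)$, combined with the spreading set $\pi$ of cardinality $\geq\de n/2$ from Lemma~\ref{le:incompspread}, gives at least $\de n/2$ indices $k$ with $\DIST(W_k,H_k)\leq t$ on the event in question, and the pointwise Markov-type domination $\IND_{\cE_t}\leq\frac{2}{\de n}\sum_k\IND_{\{\DIST(W_k,H_k)\leq t\}}$ followed by taking expectations finishes the proof. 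Your care with $\min$ versus $\inf$ over the relatively open set $\INCOMP(\de,\rho)$ is well placed and your limiting argument $t'\downarrow t$ handles it cleanly; this step is sometimes glossed over in the literature by silently reading $\min$ as $\inf$, but as you note the monotone right-continuity of $t\mapsto\IND_{\{\DIST(W_k,H_k)\leq t\}}$ makes the issue harmless.
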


\subsubsection{Small ball probabilities and related estimates}
We turn to some crucial estimates. We assume that the hypothesis of theorem
\ref{th:eigenvals} hold.
\begin{lemma}
  \label{le:smallball}
  Let $R:=(X_{i1},\ldots,X_{in})$ denote a row of the matrix $X$. There exists
  $C>0$, such that for any $t\geq 0$, any $\de,\rho\in(0,1)$, and
  $x\in\INCOMP(\de,\rho)$, and any $w\in \dC$,
  \begin{equation}\label{eq:etaRw}
    \dP \PAR{|\ANG{R,x} - w| \leq t } %
    \leq \frac{C}{\si \sqrt{\delta n}}\PAR{\frac{t\sqrt{n}}\rho+1 }.
  \end{equation}
\end{lemma}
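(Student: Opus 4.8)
The plan is to reduce the estimate to a Littlewood--Offord / small-ball bound for sums of independent random variables, using the incompressibility of $x$ to guarantee that many coordinates $x_i$ are of comparable magnitude $\asymp 1/\sqrt n$. Write $\ANG{R,x}-w=\sum_{i=1}^n X_{i} x_i - w$ where the $X_i$ are i.i.d.\ copies of $\bx$. By Lemma \ref{le:incompspread}, there is a set $\pi$ with $|\pi|\geq \de n/2$ such that $\rho/\sqrt n\leq |x_i|\leq \sqrt{2/(\de n)}$ for $i\in\pi$. The idea is to condition on all coordinates $X_i$ with $i\notin\pi$ and absorb them, together with $w$, into a single deterministic shift $w'$; it then suffices to bound $\dP(|\sum_{i\in\pi}X_i x_i - w'|\leq t)$ uniformly in $w'$.

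First I would recall (from the appendix on concentration functions referred to at the end of the excerpt) the Lévy concentration function bound: for independent variables $Z_i$, the concentration function of $\sum Z_i$ at scale $t$ is controlled by a product/integral of the individual concentration functions, yielding a bound of the form $C\,(\,\sum_{i\in\pi}\min(1,\ldots)\,)^{-1/2}$. Concretely, for each $i\in\pi$ the variable $X_i x_i$ has a nondegenerate distribution at scale $|x_i|\,\si_{\mathrm{eff}}$, where $\si_{\mathrm{eff}}$ is a lower bound for the standard deviation of a suitably truncated version of $\bx$; assumption \eqref{eq:hyp_tightness1} (equivalently \eqref{eq:hyp_tightness}) guarantees $\si_{\mathrm{eff}}\gtrsim\si$ uniformly in $n$. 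Then the Lévy--Kolmogorov--Rogozin inequality gives
\[
\dP\PAR{\ABS{\sum_{i\in\pi}X_i x_i - w'}\leq t}
\leq C\PAR{\sum_{i\in\pi}\min\PAR{1,\frac{t^2}{|x_i|^2\si^2}}^{-1}}^{-1/2}
\]
up to adjusting constants, and more simply, splitting according to whether $t\leq \rho\si/\sqrt n$ or not: if $t$ is below that threshold every term in the sum is $\asymp t^2/(|x_i|^2\si^2)$ replaced by $1$, giving $|\pi|^{-1/2}\lesssim (\de n)^{-1/2}$, while for larger $t$ one gains the extra factor $t\sqrt n/\rho$. Combining the two regimes yields exactly the claimed bound $\dfrac{C}{\si\sqrt{\de n}}\PAR{\dfrac{t\sqrt n}{\rho}+1}$.

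The main obstacle I expect is the uniform nondegeneracy of the individual summands in the sparse regime $\si\to 0$: one cannot apply a Littlewood--Offord bound directly to $\bx$ since its law depends on $n$ and may be concentrating (e.g.\ a Bernoulli variable with $p(n)\to 0$). The fix is the conditioning/truncation device already used in the proof of Lemma \ref{le:concdist}: pass to the conditioned variable $\bx^{(T)}$ for an appropriate threshold $T$, use \eqref{eq:hyp_tightness} to get $\VAR(\bx^{(T)})\gtrsim\si^2$, and handle the (rare) event that too many entries exceed $T$ separately by a Hoeffding bound, exactly as in \eqref{psinq}. One must also take care that the constant in the concentration-function estimate does not degrade when $|x_i|$ ranges over the dyadic window $[\rho/\sqrt n,\sqrt{2/(\de n)}]$, but this window has bounded logarithmic width only in the ratio $\sqrt{2/(\de n)}\big/(\rho/\sqrt n)=\sqrt 2/(\rho\sqrt\de)$, which is absorbed into the stated $\rho$- and $\de$-dependence. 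A clean way to avoid tracking the window is to simply use the crude bound $\dP(|\ANG{R,x}-w|\leq t)\le \dP(|\sum_{i\in\pi} X_i x_i - w'|\le t)$ and then, for the lower bound on the spread, keep only those $i\in\pi$ with $|x_i|\ge \rho/\sqrt n$ (all of them) and the per-coordinate small-ball estimate $\dP(|X_i x_i - u|\le t)\le C(\frac{t}{|x_i|\si}+1)\wedge 1 \le C(\frac{t\sqrt n}{\rho\si}+1)\wedge 1$ valid by \eqref{eq:hyp_tightness}; tensorizing over $|\pi|\ge\de n/2$ independent coordinates via the Rogozin inequality then produces the $\sqrt{\de n}$ gain in the denominator and finishes the proof.
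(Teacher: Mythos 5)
Your overall plan follows the same skeleton as the paper: bound the left-hand side by the concentration function $p_x(t)$, invoke Lemma \ref{le:incompspread} to pass to $\geq\de n/2$ coordinates with $|x_i|\geq\rho/\sqrt n$, rescale to reduce to a small-ball bound at scale $t\sqrt n/\rho$, and use truncation (via \eqref{eq:hyp_tightness}) to handle the sparse regime $\si(n)\to 0$. The difference is in the last step: the paper simply invokes Theorem \ref{th:halasz}, which is proved in the appendix by the Esseen/Hal\'asz characteristic-function method (Esseen's inequality, the bound $|z|\leq e^{-\frac12(1-|z|^2)}$, and an integral over periods of $\sin$), whereas you propose to tensorize per-coordinate anti-concentration via a Rogozin-type inequality.

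There are two concrete flaws in your Rogozin step. First, the displayed inequality
\[
\dP\PAR{\ABS{\sum_{i\in\pi}X_i x_i - w'}\leq t}
\leq C\PAR{\sum_{i\in\pi}\min\PAR{1,\frac{t^2}{|x_i|^2\si^2}}^{-1}}^{-1/2}
\]
is not a correct form of Kolmogorov--Rogozin and does not produce the target: for $t$ small the right-hand side degenerates to $Ct/\si$, losing the $(\de n)^{-1/2}$ gain entirely, and for $t$ large it degenerates to $C(\de n)^{-1/2}$, losing the $t$-dependence. Second, the per-coordinate estimate you state,
\[
\dP\PAR{|X_i x_i - u|\leq t}\leq C\PAR{\frac{t}{|x_i|\si}+1}\wedge 1,
\]
is trivially satisfied (it equals $1$ as soon as $C\geq 1$), hence carries no information; what a Rogozin argument actually needs is a per-coordinate \emph{anti}-concentration lower bound of the type $1-Q(X_ix_i,\lambda_i)\geq c\,(1\wedge\si^2)$, which in this paper is exactly the content of Lemma \ref{le:GT} (with $m=1$), proved by the truncation/conditioning trick you gesture at. If one does insist on the Rogozin route, the scales $\lambda_i$ must be chosen uniformly equal to $\gamma_0\rho/\sqrt n$ (not proportional to $|x_i|$), otherwise the constraint $\lambda\geq\max_i\lambda_i$ forces $t\gtrsim\sqrt{1/(\de n)}$ and the bound for small $t$ becomes $C/(\si\rho\de n)$, which is \emph{weaker} than the claimed $C/(\si\sqrt{\de n})$ whenever $\rho\sqrt{\de n}<1$ --- and that is precisely the regime used later in the paper (e.g.\ $\rho\asymp\si/(n^{\kappa}\sqrt\de)$). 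With the correct $\lambda_i$ and the nondegeneracy from Lemma \ref{le:GT} the Rogozin argument does close, but as written your proposal neither states the right inequality nor supplies the nontrivial per-coordinate anti-concentration input, so there is a genuine gap. The Esseen/Hal\'asz method in the paper's Theorem \ref{th:halasz} avoids this scale-bookkeeping and delivers the bound for all $t\geq 0$ in one stroke.
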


\begin{proof}
  Note that the left hand side is bounded above by $p_x(t)$, where $p_x$ is
  the concentration function defined in \eqref{concfunc}. By lemma
  \ref{le:incompspread}, we can assume that for all $1 \leq i \leq \delta n /
  2$, $ | x_i | \geq\rho / \sqrt{n}$. It suffices to invoke theorem
  \ref{th:halasz} with $n$ replaced by $\lfloor \delta n / 2 \rfloor $, with
  $x$ replaced by $y = x \sqrt{n}/\rho $ and use the identity: $p_{\gamma y}
  (\gamma t) = p_y ( t)$ for all $\gamma>0$.
\end{proof}

 \begin{lemma}
   \label{le:smallball2}
   Let $s(n)=n^\kappa$ for some $\kappa>0$. There exists $\veps>0$ 
   such that, if
   \begin{equation}\label{derho}
     \de=\frac{\si^2(n)}{\log(n)}
     \,,\quad 
     \rho=\frac{\veps\si(n)}{s(n)\sqrt{\de}}\,, 
   \end{equation}
   then for all $n$ large enough:
   \begin{equation}\label{eq:invcomp-star}
     \dP\PAR{%
       \min_{x\in\COMP(\de,\rho)}\NRM{Y^\top x}_2 %
       \leq \frac{ \veps \si (n)}{ \sqrt{\delta}} %
       \ ;\ s_1(Y) \leq s(n)  } %
     \leq  \exp(-\veps n\si^2(n) ).
   \end{equation}
 \end{lemma}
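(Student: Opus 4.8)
The plan is to control the compressible part by a standard union bound over a net, but adapted to the sparse regime so that the cardinality of the net is $e^{o(n\si^2)}$ rather than $e^{O(n)}$. First I would reduce to sparse unit vectors: any $x\in\COMP(\de,\rho)$ is within $\rho$ (in $\ell^2$) of a vector supported on a set of size $\le\de n$, so up to replacing $\rho$ by $2\rho$ and accepting an error $s_1(Y)\rho$ in $\NRM{Y^\top x}_2$, it suffices to bound $\min\NRM{Y^\top y}_2$ over $y$ ranging in an $\veps'\si/\sqrt\de$-net $\cN$ of the set of unit vectors supported on a fixed $\de n$-subset $S$, then union over the $\binom{n}{\de n}$ choices of $S$. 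Since $\de=\si^2/\log n$, we have $\log\binom{n}{\de n}\le \de n\log(e/\de)=O(\si^2 n\log(1/\de)/\log n)=o(n\si^2\log n/\log n)$; more carefully, $\de n\log(e/\de)\le \frac{\si^2 n}{\log n}\cdot C\log(\log n/\si^2)$, which is $o(n\si^2)$ once $n$ is large because $\log(\log n/\si^2)=o(\log n)$ under \eqref{eq:hyp_sig}. The net on a single $\de n$-dimensional sphere has size at most $(C s(n)\sqrt\de/(\veps\si)\cdot\ldots)^{\de n}$; since $s(n)=n^\kappa$ and $\si\ge n^{-1/2+o(1)}$, the logarithm of this is $O(\de n\log n)=O(\si^2 n)$ as well, with a constant we can make as small as we like by shrinking $\veps$.

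Next, for a fixed unit vector $y$ supported on $S$ with $|S|\le\de n$, I would estimate $\dP(\NRM{Y^\top y}_2\le t)$ for $t$ of order $\veps\si/\sqrt\de$. Writing $Y^\top=\si\sqrt n(M-z)^\top$ and recalling that $(M-z)^\top$ has the form (transpose of $X/(\si\sqrt n)$) minus a diagonal minus $zI$, the $k$-th coordinate of $Y^\top y$ is, up to the diagonal correction, $\ANG{R_k^{(S)},y}-\text{(something)}$ where $R_k^{(S)}$ is the restriction to $S$ of a column of $X$ (independent across $k$). Crucially, conditioning on the diagonal entries $D_{kk}$, the coordinates of $Y^\top y$ are independent; and for each $k$ outside $S$, $\ANG{R_k,y}$ is a sum of $\ge$ (a positive fraction of $\de n$) i.i.d.\ terms of typical size $\si|y_i|$, so by a small-ball / concentration-function estimate (Lemma~\ref{le:smallball} applied with the incompressibility of $y$ inside its own support, or directly Kolmogorov–Rogozin / the Berry–Esseen-type bound of the appendix), $\dP(|\ANG{R_k,y}-w|\le t\si\sqrt{|S|})\le C(t+ \ldots)$ uniformly in $w$. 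Hence $\dP(\NRM{Y^\top y}_2\le \veps\si\sqrt{|S|})\le q^{\,cn}$ for some $q<1$ and $c>0$ not depending on $S$ or $y$, i.e.\ an exponential-in-$n$ bound. Multiplying by the net cardinality $e^{o(n\si^2)}$ times $\binom{n}{\de n}\le e^{o(n\si^2)}$, and since $q^{cn}=e^{-c'n}$ beats $e^{o(n\si^2)}$ (because $\si^2\le\sup_n\si^2<\infty$, so $n\si^2=O(n)$ and more to the point the exponential gain is in $n$ while the entropy loss is $o(n\si^2)\le o(n)$), I obtain the claimed $\exp(-\veps n\si^2)$ after renaming constants. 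The intersection with $\{s_1(Y)\le s(n)\}$ is what lets the net-spacing error $s_1(Y)\rho=\veps\si/\sqrt\de$ be absorbed.

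I would organize it as: (1) pass from $\COMP$ to "unit vectors supported on a small set" via the sparse-approximation definition, paying $2\rho$ in the radius and $s_1(Y)\rho$ in the norm; (2) build the net and bound its total cardinality (over all supports) by $\exp(o(n\si^2))$ using $\de=\si^2/\log n$, $s(n)=n^\kappa$, and \eqref{eq:hyp_sig}; (3) for each fixed net point prove the exponential small-ball bound $\dP(\NRM{Y^\top y}_2\le 3\veps\si/\sqrt\de)\le e^{-cn}$ by conditioning on the diagonal $D$ and using independence of coordinates together with the one-coordinate anti-concentration estimate; (4) union bound and absorb the discretization error on the event $s_1(Y)\le s(n)$, then shrink $\veps$ to clean up constants.

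The main obstacle I expect is step (3): unlike the i.i.d.\ case, the rows of $L$ are not independent of the diagonal, and a single coordinate $\ANG{R_k,y}-zy_k-D_{kk}y_k$ of $Y^\top y$ mixes the off-diagonal weights with $D_{kk}=\sum_j X_{kj}$, which itself involves those weights. The fix is to note that once we condition on the full diagonal $\{D_{kk}\}$ (equivalently, work with the centered model from \eqref{eq:propX11} where the effective shift per coordinate is an independent CLT-type Gaussian $G_k$), the coordinates become conditionally independent and each remains a genuine sum of $\Theta(\de n)$ independent bounded terms with variance $\asymp\si^2|y_i|^2$, so the anti-concentration bound survives with constants depending only on the law of $\bx$ through \eqref{eq:hyp_tightness1}. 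A secondary technical point is making sure the entropy estimate $\de n\log(1/\de)=o(n\si^2)$ really holds: this is where the precise choice $\de=\si^2/\log n$ (rather than $\de=\si^2$) is used, since then $\log(1/\de)=O(\log\log n+\log(1/\si^2))=o(\log n)$, and this is exactly the place in the argument that forces the logarithmic loss and ultimately the hypothesis \eqref{eq:hyp_sig2}.
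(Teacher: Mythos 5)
Your proposal takes a genuinely different route from the paper, and there is a gap in it. The paper does not use an $\varepsilon$-net on each $\de n$-dimensional sphere at all. Instead, starting from \eqref{eq:comp}, it reduces the compressible case to estimating $s_n((Y^\top)_{|\pi})$ for each support $\pi$ of size $\lfloor\de n\rfloor$, and then uses the elementary bound
\[
s_n(A_{|\pi}) \;\geq\; \frac{1}{\sqrt{|\pi|}}\,\min_{i\in\pi}\DIST(W_i,H_i),
\]
(\eqref{eq:sn-pi-dist-h}), where $W_i$ is the $i$-th column of $Y^\top$ and $H_i$ is the span of the other columns indexed by $\pi$. This turns the problem into bounding distances of a random row $R_i$ of $X$ to an independent subspace of dimension at most $\de n+1$, which is handled directly by lemma \ref{le:concdist} (a Talagrand-type concentration estimate), giving $\dP\PAR{\min_{i\in\pi}\DIST(W_i,H_i)\leq\veps\si\sqrt n}\leq n e^{-\veps\si^2 n}$. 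The only entropy paid is $\binom{n}{\lfloor\de n\rfloor}$, whose log is $n h(\de)=o(n\si^2)$ by the choice $\de=\si^2/\log n$. No net on the unit sphere, and no small-ball bound of the form $\dP\PAR{|\ANG{R_k,y}-w|\leq t}$ for individual vectors $y$, is needed.

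The gap in your step (3) is that the per-net-point bound $\dP\PAR{\NRM{Y^\top y}_2\leq\veps\si\sqrt\de}\leq e^{-cn}$ with a uniform constant $c>0$ does not hold for all unit vectors $y$ supported on a set $S$ of size $\de n$. Your argument needs $y$ to be ``incompressible inside its own support'' so that each coordinate $\ANG{R_k,y}$ is a sum of a positive fraction of $|S|$ terms; but net points on the sphere of $\dC^S$ include vectors concentrated on very few coordinates, and for those the anti-concentration fails badly. In the sparse regime (exactly the regime this lemma is built to handle) the entries $X_{ik}$ can equal $0$ with probability close to $1$, so if $y$ is supported on a single coordinate, $\ANG{R_k,y}=X_{i_0 k}$ has concentration function $p_y(0)\geq 1-p(n)\to 1$, and the $n$-coordinate product bound degrades from $e^{-cn}$ to roughly $e^{-c\si^2 n}$ or worse, interpolating continuously between these as the ``spread'' of $y$ degrades. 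To make your approach work one would need a multi-scale decomposition of the net vectors by how spread they are on $S$ --- which is essentially the machinery the authors develop separately for the \emph{incompressible} part in lemma \ref{le:comp2} and corollary \ref{cor3:halasz} --- precisely to avoid having to do it for the compressible part. A secondary issue: you claim the net-cardinality contribution $\de n\log(s/\rho)\approx\kappa\si^2 n$ has ``a constant we can make as small as we like by shrinking $\veps$,'' but the dominant factor $\kappa\log n$ in $\log(1/\rho)$ comes from $s(n)=n^\kappa$, not from $\veps$, so shrinking $\veps$ only affects lower-order terms. The paper's distance-to-subspace reduction avoids both of these difficulties in one stroke.
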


 \begin{proof}
   If $A \in \cM_n (\dC)$ and $y \in \dC^n$ is such that $\SUPP(y)\subset\pi
   \subset \{1, \ldots , n\}$, then
   \[
   \NRM{A y}_2\geq \NRM{y}_2s_n(A_{|\pi}),
   \]
   where $A_{| \pi}$ is the $n \times |\pi|$ matrix formed by the columns of $A$
   selected by $\pi$. Therefore,
   \begin{equation}\label{eq:comp}
     \min_{ x \in \COMP(\de,\rho)} \NRM{Ax}_2 %
     \geq (1-\rho) %
     \min_{\pi
       : \,|\pi| = \FLOOR{\delta n}}s_n(A_{|\pi}) %
     - \rho s_1 (A).
   \end{equation}
   On the other hand, for any $x \in \dC^{|\pi|}$,
   \begin{align*}
     \NRM{A_{|\pi} x}_2 ^2 %
     &=\Big\|\sum_{i \in \pi} x_i W_i\Big\|_2^2 \\ %
     &\geq\max_{i \in \pi}|x_i|^2 \DIST^2 (W_i , H_{i}) \\ %
     &\geq\min_{i \in \pi}\DIST^2(W_i,H_{i})\,\frac{1}{|\pi|}\sum_{j\in\pi}|x_j|^2,
   \end{align*}
   where $W_i$ is the $i$-th column of $A$ and $H_i:=\SPAN\{W_j:j \in \pi, j \ne
   i\}$. In particular,
   \begin{equation}\label{eq:sn-pi-dist-h}
     s_n ( A_{|\pi}) %
     \geq \frac1{\sqrt{|\pi|}}\,\min_{i \in \pi}  \DIST(W_i, H_i) . 
   \end{equation}
   Next, we apply \eqref{eq:sn-pi-dist-h} to $A = Y^\top$. We have
   $W_i=R^\top_i+(- D_{ii}+mn- z\si\sqrt{n})e_i$ where $R_i$ is the $i$-th row
   of $X$. Therefore,
   \[
   \DIST(W_i,H_i)\geq \DIST(R_i,\SPAN\{H_i,e_i\}) =  \DIST(R_i,H_i')
   \]
   where $H_i'=\SPAN\{H_i,e_i\} = \SPAN\{R_j, e_i :j \in \pi, j\ne i \}$. $H_i
   '$ has dimension at most $n\delta + 1$ and is independent of $R_i$. By
   lemma \ref{le:concdist}, with e.g.\ $\psi(n)=n-2\de n\geq n/2$, one has
   that, for some $\veps>0$
   \[
   \dP\PAR{\min_{i \in \pi}  \DIST(W_i, H_{i}) \leq \veps  \si \sqrt{n}}
   \leq n \exp ( - \veps\si^2 n )
   \]
   for all $n$ large enough. From \eqref{eq:sn-pi-dist-h}, for $|\pi|\leq \de
   n$:
   \[
   \dP \PAR{s_n((Y^\top)_{|\pi})\leq\frac{\veps\si}{\sqrt\delta}} %
   \leq n\exp(-\veps\si^2n).
   \]
   Therefore, using the union bound and $1-\rho\geq 3/4$, we deduce from
   \eqref{eq:comp}
   \[
   \dP\PAR{%
     \min_{x\in\COMP}\NRM{Y^\top x}_2\leq\frac{ \veps \si }{ 2\sqrt{\delta}} %
     \ ;\ s_1(Y) \leq s(n) } %
   \leq {\binom{n}{\FLOOR{\delta n}}} n e^{- \veps \si^2 n} %
   = e^{n( h(\delta ) (1+o (1) ) -\veps\si^2 )},
   \]
   with $h(\delta) := - \de \log \de - (1- \de) \log (1 -\de)$. As
   $n\to\infty$, $h(\de)=-\de\log \de(1+o(1))$ and using $n\si^2\gg\log(n)$ one
   has $-\de\log\de\leq \veps\si^2/2$ for all $n$ large enough. Therefore,
   \eqref{eq:invcomp-star} follows by adjusting the value of $\veps$.
\end{proof}

 \begin{lemma}\label{le:W_k,H_k}
   Let $W_k,H_k$ be as in lemma \ref{le:invIncomp}, with $A=Y^\top$. Let
   $\de,\rho,s(n)$ be as in lemma \ref{le:smallball2}. There exists $C>0$ such
   that, for all $1\leq k\leq n$ and $t \geq 0$,
   \begin{equation}\label{eq:invincomp}
     \dP\PAR{\DIST(W_k,H_k) \leq t \,;\, s_1 (Y) \leq s(n)} %
     \leq \frac{C}{\si\sqrt{\delta n}}\PAR{\frac{t\,s(n)}{\rho\,\si|\al|}+1},
   \end{equation}
   where $\al= m\sqrt{n}/\si-z $. 
 \end{lemma}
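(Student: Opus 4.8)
\noindent\emph{Plan of proof.}
The plan is to condition on the randomness outside the $k$-th row of $X$, bound $\DIST(W_k,H_k)$ below by a linear form in the i.i.d.\ entries of that row, and then apply Lemma~\ref{le:smallball}; the whole point is to identify the correct coefficient vector. Fix $k$ and condition on $\cF^{(k)}$, the $\si$-algebra generated by $\{X_{ij}:i\neq k\}$. For $j\neq k$ the column $W_j$ of $Y^\top$ is $\cF^{(k)}$-measurable, hence so is $H_k$; pick an $\cF^{(k)}$-measurable unit vector $v\perp H_k$ (in the degenerate case $\DIM H_k<n-1$, or when one also wants $v\perp e_k$, one works with $H_k+\dC e_k$, which only improves the estimate). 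Using $W_k=R_k^\top+(\tilde c-D_{kk})e_k$ with $R_k$ the $k$-th row of $X$ and $\tilde c$ the deterministic diagonal shift appearing in $W_k$, for which $|\tilde c|\asymp\si\sqrt n\,|\al|$, one gets
\[
\DIST(W_k,H_k)\ \geq\ \ABS{\ANG{W_k,v}}
\ =\ \BABS{\ \sum_{j\neq k}(\bar v_j-\bar v_k)X_{kj}+\tilde c\,\bar v_k\ },
\]
a linear form in the independent variables $(X_{kj})_{j\neq k}$ with coefficient vector $u:=(\bar v_j-\bar v_k)_{j\neq k}\in\dC^{n-1}$ and a shift that is constant given $\cF^{(k)}$. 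Since $\{s_1(Y)\leq s(n)\}$ is not $\cF^{(k)}$-measurable, I would replace it by the implied $\cF^{(k)}$-measurable event $\{s_1(Y_{\hat k\hat k})\leq s(n)\}$, where $Y_{\hat k\hat k}$ is the principal minor of $Y$ obtained by deleting row and column $k$, for which $s_1(Y_{\hat k\hat k})\leq s_1(Y)$ by interlacing.

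The second step is a lower bound on $\NRM u$. Since $L=X-D$ has zero row sums, $\IND=(1,\dots,1)^\top$ is an eigenvector of $Y$ and of $Y^\top$ with eigenvalue $\tilde c$, i.e.\ $\IND^\top W_j=\tilde c$ for all $j$. Put $S=\sum_jv_j$ and $v_\perp=v-(S/n)\IND$, so $\NRM{v_\perp}^2=1-|S|^2/n$. From $\bar Y v=\bar\alpha\,e_k$, where $\alpha=\ANG{W_k,v}$, one gets $\bar Y v_\perp=\bar\alpha\,e_k-(S/n)\bar{\tilde c}\,\IND$, hence $s_1(Y)\NRM{v_\perp}\geq\NRM{\bar Yv_\perp}\geq|S||\tilde c|/(2\sqrt n)$. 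Combining this with $\NRM{v_\perp}^2=1-|S|^2/n$ yields $\NRM{v_\perp}^2\geq|\tilde c|^2/(4s(n)^2+|\tilde c|^2)$, so that $\NRM{v_\perp}\gtrsim|\tilde c|/s(n)$ on $\{s_1(Y)\leq s(n)\}$ (using $|\tilde c|\lesssim s(n)$). Since $\NRM u=\NRM{v-v_k\IND}\geq\NRM{v-(S/n)\IND}=\NRM{v_\perp}$, this gives $\NRM u\gtrsim\si\sqrt n\,|\al|/s(n)$.

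The heart of the proof—and the step I expect to be the real obstacle—is to show that $u/\NRM u\in\INCOMP(\de,\rho)$ on $\{s_1(Y_{\hat k\hat k})\leq s(n)\}$ outside an event of probability $\leq e^{-\veps n\si^2}$. Equivalently, one must prove that the component $v_\perp$ of $v$ orthogonal to $\IND$ is incompressible (the map sending $v_\perp$ to $\overline u$—restrict to coordinates $\neq k$, subtract the $k$-th coordinate—does not destroy incompressibility once $\NRM u$ is controlled). This is a Rudelson–Vershynin-type statement: if $v_\perp$ were $\rho$-close to a $\de n$-sparse vector, then $v$ would be close to a vector of the form $\lambda\IND+(\text{sparse})$, and the relations $v\perp W_j$ ($j\neq k$), $\IND^\top W_j=\tilde c$, $\NRM Y\leq s(n)$ would force this onto an exceptional set, which is controlled by a union bound over an $\veps$-net of such ``$\IND$-plus-sparse'' vectors—whose metric entropy is $O(\de n\log(1/\de))=O(n\si^2)$ for $\de=\si^2/\log n$—together with the distance estimate of Lemma~\ref{le:concdist} applied to the rows outside the support. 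The hypothesis $n\si^2\gg(\log n)^6$ is precisely what makes the per-vector estimate dominate the net cardinality here. An alternative route is to apply Lemma~\ref{le:smallball2} to the minor $Y_{\hat k\hat k}$, using the identity $s(n)\rho=\veps\si/\sqrt\de$.

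Finally, on the good $\cF^{(k)}$-measurable event supplied by the previous two steps, Lemma~\ref{le:smallball} applied to $R_k$, the incompressible unit vector $u/\NRM u$, and the shift $-\tilde c\,\bar v_k/\NRM u$ gives
\[
\dP\bigl(\ABS{\ANG{W_k,v}}\leq t\,\bigm|\,\cF^{(k)}\bigr)
\ \leq\ \frac{C}{\si\sqrt{\de n}}\Bigl(\tfrac{t\sqrt n}{\NRM u\,\rho}+1\Bigr)
\ \leq\ \frac{C'}{\si\sqrt{\de n}}\Bigl(\tfrac{t\,s(n)}{\rho\,\si|\al|}+1\Bigr),
\]
and taking expectations and then adding the exceptional probability $e^{-\veps n\si^2}\leq C/(\si\sqrt{\de n})$ (legitimate since $n\si^2\gg(\log n)^6$) proves \eqref{eq:invincomp}. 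In the milder regime $\si^2(n)\geq n^{-1/4+\veps}$ the exceptional probabilities above are already polynomially small of sufficiently high order, which is what permits $n^{-4}$ to replace $e^{-(\log n)^2}$ in Proposition~\ref{prop:small}; I would keep track of this bookkeeping throughout, the only other delicate points being the entropy-versus-concentration balance of the incompressibility step in the sparse regime and the harmless degenerate case $\DIM H_k<n-1$.
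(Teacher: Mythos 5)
Your proposal follows essentially the paper's own route. You lower--bound $\DIST(W_k,H_k)$ by $|\ANG{W_k,v}|$ for a unit $v\perp H_k$ that is measurable with respect to the randomness outside row $k$; you rewrite this as a linear form in $R_k$ with coefficient vector $u\propto(\bar v_j-\bar v_k)_{j\neq k}$ plus an $\cF^{(k)}$--measurable shift; you bound $\|u\|$ from below using $Y\phi=\al\si\sqrt{n}\,\phi$ and $s_1(Y)\leq s(n)$, and then conclude with Lemma~\ref{le:smallball}. Your $v_\perp$ computation is, up to relabelling, the paper's Lemma~\ref{le:zetaphi}: the paper works with $\DIST(\hat\phi,\ker B)$, where $B$ is $Y$ with the $k$-th row set to zero, and gets the same $\NRM{\zeta-\la\phi}\geq|\al|\si\sqrt{n}/(2s_1(Y))$ for all $\la$. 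Two cosmetic remarks: the $\cF^{(k)}$--measurable proxy for $s_1(Y)$ should be $s_1(B)$ (equivalently, the operator norm of the matrix with rows $W_j$, $j\neq k$), not $s_1(Y_{\hat k\hat k})$ — it is $B$, not the minor, that appears in your inequality $\NRM{\bar Bv_\perp}\leq s_1(B)\NRM{v_\perp}$ — and the paper does not phrase the argument through explicit conditioning, but the conditioning is implicit there and your version is fine.

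The genuine gap is exactly where you flag it, namely the incompressibility of $u/\NRM{u}$, and your sketch does not close it. The paper's Lemma~\ref{le:normalincomp} does not use a net over ``$\IND$--plus--sparse'' vectors; rather, it observes that if $\eta(\la)=(\zeta-\la\phi)/\NRM{\zeta-\la\phi}$ were compressible for some $\la\in\dC$, then since $B\bar\zeta=0$ one would have $\min_{x\in\COMP(\de,\rho),\,\la\in\dC}\NRM{B(x+\la\phi)}_2=0$, and, because $B\phi$ and $Bx-Yx$ both lie in $\Phi_k=\SPAN\{\phi,e_k\}$, this forces $\min_{x\in\COMP(\de,\rho)}\NRM{\Pi Yx}_2=0$ for $\Pi$ the projection onto $\Phi_k^\perp$; the probability of that event (intersected with $s_1(Y)\leq s(n)$) is then bounded by the union--over--supports argument of Lemma~\ref{le:smallball2}, applied with the slightly enlarged conditioning subspaces $\SPAN\{C_j,e_\ell,\Phi_k\}$. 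Your sentence ``the relations $v\perp W_j$, $\IND^\top W_j=\tilde c$, $\NRM{Y}\leq s(n)$ would force this onto an exceptional set'' names the ingredients but not the concrete small-ball event in the random matrix that makes it exceptional; without the reduction to $\Pi Y$ (or an equivalent) the argument does not go through. The ``alternative route'' via Lemma~\ref{le:smallball2} applied to $Y_{\hat k\hat k}$ does not work either: it would control compressibility of $\zeta$ restricted to coordinates $\neq k$, not of the shifted vector $(\zeta_j-\zeta_k)_{j\neq k}$, and the minor has lost the very eigenrelation $Y\phi=\al\si\sqrt{n}\,\phi$ that makes the shift by $\zeta_k\phi$ the relevant one.
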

 Note that the $z$ dependence, represented by the coefficient $\al$ in the
 above estimate, cannot be completely avoided since if $z=m\sqrt{n}/\si$ then
 $s_n(Y)=0$.

 The proof of lemma \ref{le:W_k,H_k} requires a couple of intermediate steps.
 Fix $k$ and, conditional on $H_k$, consider a unit vector $\zeta$ orthogonal
 to $H_k$. Since $W_k$ is independent of $H_k$, the random vector $\zeta$ can
 be assumed to be independent of $W_k$. Clearly,
 \begin{equation}\label{eq:distnormal}
   \DIST(W_k,H_k) \geq |\ANG{\zeta,W_k}|.
 \end{equation}
 Define $\phi=(1,\ldots,1)$ and $\Phi = \SPAN \{
 \phi \} = \{\la\phi,\,\la\in\dC\}$. 
 
 \begin{lemma}
   \label{le:zetaphi}
   Let $n \geq 2$. The unit vector $\zeta$ orthogonal to $H_k$ satisfies
   \begin{equation}\label{eq:1/2}
     \DIST(\zeta,\Phi) \geq \frac{|\al|\si \sqrt{n}}{2 s_1(Y)},
   \end{equation}
   where $\al= m\sqrt{n} / \si-z$. In particular, if $s_1(Y)\leq s $, then
   one has, for any $\la\in\dC$,
   \begin{equation*}\label{eq:1/20}
     \|\zeta-\la\phi\|\geq  \frac{|\al|\si \sqrt{n}}{2 s }.
   \end{equation*}
 \end{lemma}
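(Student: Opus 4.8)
The plan is to play off two structural facts about $Y=\si\sqrt n\,(M(n)-zI)$: because every row of $L$ sums to zero, the all–ones vector $\phi$ is an eigenvector of $Y$; and the very definition of $\zeta$ forces $Y$ to send (a conjugate of) $\zeta$ to a multiple of the single basis vector $e_k$. Combining these with the trivial inequality $\NRM{Yv}\le s_1(Y)\NRM v$ will produce \eqref{eq:1/2}.

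Concretely, I would first rewrite, using \eqref{eq:defM} and \eqref{matrY},
\[
Y=\si\sqrt n\,(M(n)-zI)=L+(nm-\si\sqrt n\,z)I=L+\si\sqrt n\,\al\,I ,
\]
so that $L\phi=0$ gives $Y\phi=\si\sqrt n\,\al\,\phi$; in particular $s_1(Y)=\NRM Y\ge\si\sqrt n\,|\al|$, hence the quantity $\si\sqrt n\,|\al|/(2s_1(Y))$ to be bounded from below is always $\le 1/2$ (and the claim is trivial if $\al=0$, so I assume $\al\ne0$). Next, $\zeta\perp W_j$ for all $j\ne k$, where $W_j$ is the $j$-th row of $Y$ viewed as a column; this says precisely that $Y$ applied to $\zeta$ (or to $\bar\zeta$, according to the convention chosen for the Hermitian inner product) is a multiple $c\,e_k$ of $e_k$. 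Since $\Phi=\dC\phi$ and the Euclidean norm are invariant under complex conjugation, $\DIST(\zeta,\Phi)$ is unchanged under conjugation, so I may work with a unit vector $\eta$ satisfying $Y\eta=c\,e_k$ for some $c\in\dC$.

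Then I would split off the component of $\eta$ along $\phi$: write $\eta=\la\phi+v$ with $v\perp\phi$, so that $\NRM v=\DIST(\eta,\Phi)$ and $|\la|^2n+\NRM v^2=1$. Applying $Y$ and using $Y\eta=c\,e_k$ and $Y\phi=\si\sqrt n\,\al\,\phi$,
\[
Yv=c\,e_k-\la\,\si\sqrt n\,\al\,\phi ,
\]
and reading this coordinatewise,
\[
\NRM{Yv}^2=|c-\la\si\sqrt n\,\al|^2+(n-1)\,|\la|^2\si^2n\,|\al|^2\ \ge\ (n-1)\,|\la|^2\si^2n\,|\al|^2 .
\]
A short dichotomy then closes the argument. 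If $|\la|^2n\le 3/4$, then $\DIST(\eta,\Phi)=\NRM v=\sqrt{1-|\la|^2n}\ge 1/2\ge\si\sqrt n\,|\al|/(2s_1(Y))$ by the first step. If $|\la|^2n>3/4$, the display gives $\NRM{Yv}^2>\tfrac34(n-1)\si^2|\al|^2\ge\tfrac n4\si^2|\al|^2$, where the last inequality uses $3(n-1)\ge n$, i.e.\ $n\ge2$; combined with $\NRM{Yv}\le s_1(Y)\NRM v$ this yields $\NRM v\ge\si\sqrt n\,|\al|/(2s_1(Y))$. This is \eqref{eq:1/2}, and the final assertion follows at once: $s_1(Y)\le s$ turns the bound into $\si\sqrt n\,|\al|/(2s)$, while $\NRM{\zeta-\la\phi}\ge\DIST(\zeta,\Phi)$ for every $\la\in\dC$.

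I do not expect a genuine obstacle here: the one substantive step is the structural observation that $\phi$ is an eigenvector of $Y$ whereas $Y\eta$ is concentrated on a single coordinate. After that, the only care needed is the conjugation bookkeeping in passing from $\zeta$ to $\eta$ (immaterial, since it does not affect distance to $\Phi$) and the two-regime split, which is exactly what produces the constant $2$ and where the hypothesis $n\ge2$ enters, through the $\sqrt{n-1}$ versus $\sqrt n$ gap coming from the lost coordinate.
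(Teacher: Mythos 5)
Your proof is correct, and it takes a route that is related to but cleaner in some respects different from the paper's. Both arguments rest on the same two structural facts: $Y\phi=\al\si\sqrt n\,\phi$ (because $L$ has zero row sums), and $Y\bar\zeta$ is supported on the single coordinate $k$ (because $\zeta\perp W_j$ for $j\ne k$). The difference is \emph{which} vector you decompose. You decompose the \emph{unknown} vector $\eta=\bar\zeta$ as $\la\phi+v$ with $v\perp\phi$, apply $Y$, and read off coordinates; since the size of $|\la|$ is not known a priori, you need a two-regime split (small $|\la|$: the distance is already $\geq 1/2$, using the easy lower bound $s_1(Y)\geq\si\sqrt n\,|\al|$ from $Y\phi=\al\si\sqrt n\,\phi$; large $|\la|$: the coordinatewise computation gives the bound). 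The paper instead exploits the duality $\DIST(\zeta,\Phi)=\DIST(\hat\phi,\SPAN\{\bar\zeta\})$ for unit vectors (with $\hat\phi=\phi/\sqrt n$ real) and lowers this to $\DIST(\hat\phi,\ker B)$, where $B$ is $Y$ with the $k$-th row zeroed out; decomposing the \emph{known} vector $\hat\phi$ against $\ker B$ makes the relevant coefficient exactly the distance, and then $s_1(B)\cdot\DIST(\hat\phi,\ker B)\geq\|B\hat\phi\|=|\al|\si\sqrt{n-1}$ closes the argument in one step, with no dichotomy. So the paper's duality trick buys a cleaner bookkeeping-free argument, while yours is more hands-on but elementary and equally rigorous; both use $n\geq2$ at exactly the same spot, to absorb the $\sqrt{n-1}$ versus $\sqrt n$ gap into the constant $2$.

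One small remark: you write $\NRM{Yv}\leq s_1(Y)\NRM v$ as if it were immediate, which it is, but it is worth noticing that what you actually need in the large-$|\la|$ regime is $\NRM v\geq\NRM{Yv}/s_1(Y)>\si\sqrt n\,|\al|/(2 s_1(Y))$, which requires $s_1(Y)>0$; this holds here since $s_1(Y)\geq\si\sqrt n\,|\al|>0$ once you have disposed of the trivial case $\al=0$, as you do at the start.
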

 
 \begin{proof}
   Set $\hat\phi=\frac{1}{\sqrt{n}}\phi$. Since $\| \zeta \| = \|\hat \phi \|=
   1$, we have
   \[
   \DIST( \zeta,\Phi) = \DIST(\hat \phi, \SPAN\{\zeta \}) = \DIST(\hat \phi,
   \SPAN\{\bar\zeta \}).
   \]
   (the last identity follows from $\hat \phi \in \dR^n$). Let $B \in \cM_{n}
   (\dC)$ be the matrix obtained from $Y$ by replacing the $k$-th row with the
   zero vector. Then, by construction $B \bar \zeta = 0$. Hence, $\bar \zeta
   \in \ker B$ and
   \begin{equation*}\DIST( \zeta,\Phi) \geq \DIST(
     \hat \phi,\ker B).
   \end{equation*}
   Observe that $\hat\phi=au+bv$, where $a,b\in\dC$, and some unit vectors
   $v\in \ker B$ and $u\in (\ker B)^\perp$. Then $|a|=\DIST(\hat\phi,\ker B)$
   can be bounded as follows. Note that $Y$ satisfies
   $Y\phi=\al\si\sqrt{n}\phi$ and therefore
   $aBu=B\hat\phi=\al\si\sqrt{n}\hat\phi-\al\si e_k$. Consequently, one has
   \[
   s_1(B)|a|\geq \|aBu\| %
   = \|B\hat \phi\| = | \al |\si \sqrt{n-1} \geq | \al |\si \sqrt{n} /2
   \]
   This implies \eqref{eq:1/2} since $s_1(B)\leq s_1(Y)$.
 \end{proof}
 
 \begin{lemma}
   \label{le:normalincomp}
   Let $\de,\rho,s(n)$ be as in lemma \ref{le:smallball2}. There exists
   $\veps>0$ such that, for all $n$ large enough:
   \[
   \dP\PAR{ \exists \la \in \dC : \eta(\la) \in \COMP(\de,\rho)\, ; s_1 (Y) \leq s(n) } %
   \leq \exp ( -\veps\si^2 n),
   \]
   where for $\la \in \dC$, $\eta (\la) :=(\zeta-\la
   \phi)/\|\zeta-\la \phi\|$.
 \end{lemma}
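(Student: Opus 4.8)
The plan is to combine an $\veps$-net over $\la$ with a single-parameter estimate of ``invertibility on compressible vectors'' type, in the spirit of lemma \ref{le:smallball2}.

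\smallskip
\emph{Step 1: trimming the range of $\la$, and netting.} Write $\hat\phi=\phi/\sqrt n$ and decompose $\zeta-\la\phi=(\ANG{\zeta,\hat\phi}-\la\sqrt n)\hat\phi+\zeta^{\perp}$ with $\zeta^{\perp}\perp\hat\phi$; then $\eta(\la)$ tends to a unit multiple of $\hat\phi$ as $|\la|\to\infty$. Since $\DIST(\hat\phi,\SPARSE(\de))=\sqrt{1-\FLOOR{\de n}/n}\to1$ while $\rho\to0$, there is an absolute constant $C$ with $\eta(\la)\notin\COMP(\de,\rho)$ whenever $|\la|\ge C/\sqrt n$, so only $\la$ in the disc $D=\{|\la|\le C/\sqrt n\}$ is relevant. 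On $\{s_1(Y)\le s(n)\}$, lemma \ref{le:zetaphi} gives $\NRM{\zeta-\la\phi}\ge|\al|\si\sqrt n/(2s(n))$ uniformly in $\la$, whence $\NRM{\tfrac{d}{d\la}\eta(\la)}$ is bounded by a quantity polynomial in $n$. Covering $D$ by a net $\Sigma$ of mesh comparable to $\rho$ divided by that bound — so $\ABS\Sigma$ is polynomial in $n$ — reduces the claim, after inflating $\rho$ to $2\rho$, to the following estimate for each fixed $\la\in\Sigma$:
\[
\dP\PAR{\eta(\la)\in\COMP(\de,2\rho)\ ;\ s_1(Y)\le s(n)}\le e^{-\veps n\si^2}.
\]

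\smallskip
\emph{Step 2: the fixed-$\la$ estimate.} Fix $\la$ and use $\ANG{W_j,\zeta}=0$ for every $j\neq k$, where $W_j$ is the $j$-th row of $Y$; since $W_j$ is the $j$-th row of $X$ altered only in its diagonal entry, the rows $W_j$ ($j\neq k$) are \emph{independent}. Put $v=\zeta-\la\phi$, $t=\NRM v$, and let $S$ be the support of a unit vector $u$ with $\ABS{\SUPP(u)}\le\FLOOR{\de n}$ and $\NRM{\eta(\la)-u}\le2\rho$, so that $\NRM{v_{S^c}}\le2\rho t$. Because $\ANG{W_j,\phi}=\al\si\sqrt n$ for all $j$, restricting the relations $\ANG{W_j,\zeta}=0$ to the coordinates in $S$ yields
\[
\ANG{(W_j)_{|S},\,v_{|S}}=-\bar\la\,\al\si\sqrt n+r_j\qquad(j\neq k),
\]
with $\sum_{j\neq k}\ABS{r_j}^2\le s_1(Y)^2\NRM{v_{S^c}}^2=\cO(\rho^2 s(n)^2)$ on the event. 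Hence the $(n-1)\times\ABS S$ matrix $\Psi:=((W_j)_{|S})_{j\neq k}$ — which has independent rows and genuinely random entries — sends $v_{|S}/\NRM{v_{|S}}$ to within $\cO(\rho\,s(n)/\NRM{v_{|S}})$ of the line of constant vectors in $\dC^{n-1}$, while lemma \ref{le:zetaphi} forces $\NRM{v_{|S}}\ge(1-2\rho)\,t\gtrsim|\al|\si\sqrt n/s(n)$. It therefore suffices to prove that, with probability $\ge1-e^{-\veps n\si^2}$, every unit vector $w$ supported on $S$ satisfies $\DIST(\Psi w,\,\text{constant vectors})\ge\veps'\si\sqrt n$; this is an invertibility-on-compressibles bound for the tall, genuinely random block $\Psi$, proved by the method of lemma \ref{le:smallball2} — through \eqref{eq:sn-pi-dist-h} and lemma \ref{le:concdist}, after quotienting out the single direction $\phi_{|S}$. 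Under $n\si^2\gg(\log n)^6$ and $z\notin\La$ (so $|\al|=|m\sqrt n/\si-z|$ is bounded away from $0$) this contradicts the previous displayed estimate, and a union bound over the $\binom{n}{\FLOOR{\de n}}=e^{\cO(n\si^2)}$ supports $S$ and the polynomially many $\la\in\Sigma$ gives the lemma, using $\FLOOR{\de n}\log(1/\de)=\cO(n\si^2)$ together with the calibration $\de=\si^2/\log n$, $\rho=\veps\si/(s(n)\sqrt\de)$.

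\smallskip
\emph{Main obstacle.} The delicate point is to close the fixed-$\la$ contradiction \emph{with margin over the combinatorial union bound over the supports $S$}, and in particular to handle the value $\la\approx\la^{\ast}$ for which $\NRM{v_{|S}}\approx\DIST(\zeta,\Phi)$ is only as large as lemma \ref{le:zetaphi} guarantees: one must arrange that $\veps'\si\sqrt n$ still dominates $\cO(\rho\,s(n)/\NRM{v_{|S}})$ uniformly in $S$. This is exactly what dictates the numerology of $\de$, $\rho$ and the hypothesis $n\si^2\gg(\log n)^6$; and the reason one must control the entire family $\eta(\la)$ rather than $\zeta$ alone is that $\phi$ is an eigenvector of $Y$ with the large eigenvalue $\al\si\sqrt n$, so that $\zeta-\la\phi$ fails to be an approximate null vector of $Y$ with its $k$-th row deleted unless $\la=0$.
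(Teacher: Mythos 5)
Your overall plan — net over $\la$ on the event $s_1(Y)\le s(n)$ and then prove a per-$\la$ estimate by the compressible machinery — is a reasonable strategy, and in spirit it lands on the same inputs (lemma~\ref{le:concdist}, \eqref{eq:sn-pi-dist-h}, the union bound over supports) as the paper does. The paper, however, avoids the $\la$-net entirely by a single algebraic reformulation that you did not find: since $B\bar\zeta=0$ (with $B$ the matrix $Y$ with row $k$ zeroed), the existence of $\la$ with $\eta(\la)\in\COMP(\de,\rho)$ is equivalent to $\min_{x\in\COMP(\de,\rho),\,\la\in\dC}\NRM{B(x+\la\phi)}_2=0$; and because $B\phi\in\Phi_k:=\SPAN\{\phi,e_k\}$ and $Bx-Yx\in\Phi_k$, this is the same as $\min_{x\in\COMP(\de,\rho)}\NRM{\Pi Yx}_2=0$, where $\Pi$ projects onto $\Phi_k^\perp$. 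After that one runs the same argument as for \eqref{eq:invcomp-star} on $\Pi Y$ (with $H_i''=\SPAN\{C_j,e_\ell,\Phi_k\}$, of dimension $\le 1+2n\de$). No netting, no fixed-$\la$ estimate, no separate discussion of what happens when $\la\approx\la^*$.

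There is also a genuine numerical gap in your Step 2 which prevents the argument from closing as written. First, writing $t=\NRM{v}$, you dropped a factor of $t$: from $\NRM{v_{S^c}}\le 2\rho t$ one gets $\NRM{r}\le 2\rho\,t\,s(n)$, so after dividing by $\NRM{v_{|S}}\ge(1-2\rho)t$ the normalized error is $\cO(\rho\,s(n))$, \emph{uniformly in $\la$} — the $t$'s cancel. Your bound $\cO(\rho\,s(n)/\NRM{v_{|S}})$ is what one gets after incorrectly dropping $t$ in the numerator and then dividing by $\NRM{v_{|S}}\sim t$; and since by lemma~\ref{le:zetaphi} $\NRM{v_{|S}}$ can be as small as $\si\sqrt n/s(n)$, that (incorrect) bound blows up by a factor $s(n)^2/(\si\sqrt n)$. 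This is exactly what generates your ``main obstacle'': it is a phantom, produced by the algebra slip, and disappears once the $t$'s are allowed to cancel. Second, the lower bound you need is not $\veps'\si\sqrt n$: the route through \eqref{eq:sn-pi-dist-h} and lemma~\ref{le:concdist} gives, for the $(n-1)\times\ABS{S}$ block, only $\frac{1}{\sqrt{\ABS S}}\min_i\DIST(\cdot)\gtrsim\frac{\si\sqrt n}{\sqrt{\de n}}=\si/\sqrt{\de}$, not $\si\sqrt n$ (with $\de=\si^2/\log n$, $\si/\sqrt\de=\sqrt{\log n}\ll\si\sqrt n$). As stated, your claimed inequality $\cO(\rho\,s(n)/\NRM{v_{|S}})\ll\veps'\si\sqrt n$ therefore matches a too-weak error against a too-strong bound and does not survive inspection. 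With both fixed you are comparing $\cO(\rho\,s(n))=\cO(\veps_\rho\,\si/\sqrt\de)$ against $\veps_0\,\si/\sqrt\de$, and the argument closes by choosing $\veps_\rho<\veps_0$ in the definition of $\rho$ — which is exactly the calibration the paper uses for \eqref{eq:invcomp-star}. Finally, a smaller point: in the ``tall, genuinely random block $\Psi$'' you should take care of the columns $i\in S$ where the diagonal entry $Y_{ii}$ depends on the whole row; the paper absorbs these by adding the directions $e_\ell$, $\ell\in\pi$, into the subspace $H_i''$, and something analogous is needed in your setup.
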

 
 \begin{proof}
   Let $B$ be as in the proof of lemma \ref{le:zetaphi}. Thus $B \bar\zeta =
   0$. Note that by lemma \ref{le:zetaphi}, $\eta(\la)$ is well defined as
   soon as $\al \neq 0$, and then $\|\eta (\la)\|=1$. If there exists
   $\la \in \dC$ such that $\eta (\la) \in \COMP(\de,\rho)$, then
   $B(\bar \eta (\la) +\la' \phi)=0$ for $\la ' = \bar\la /
   \|\zeta-\la \phi\| \in\dC$. Therefore
   \[
   \min_{ x \in \COMP(\de,\rho),\,\la\in\dC} \NRM{B (x+\la\phi)} =0.%
   \]
   Note that if $\Phi_k :=\SPAN\{\phi,e_k\}$, then $Y\phi\in\Phi, B \phi\in
   \Phi_k$ and $Bx-Yx\in\Phi_k$ for all vectors $x$. Thus
   \[
   \min_{ x \in \COMP(\de,\rho),\, v\in\Phi_k} \NRM{Y x+v}_2 =0.%
   \]
   The above equation can be rewritten as
   \begin{equation}\label{eq:pg}
     \min_{ x \in \COMP(\de,\rho)} \NRM{\Pi Y x}_2 =0,
   \end{equation}
   where $\Pi $ is the orthogonal projection on the orthogonal complement of
   $\Phi_k$. On the other hand, one has
   \[
   \NRM{\Pi Y x}_2 \geq \NRM{x}_2s_n([\Pi Y]_{|\pi}),
   \]
   if $\pi$ is the support of $x$. As in \eqref{eq:sn-pi-dist-h} one
   has
   \[
   s_n ([\Pi Y]_{|\pi}) \geq \frac1{\sqrt{ |\pi|}}\,\min_{i \in \pi}
   \DIST(\tilde W_i, \tilde H_i),
   \]
   where $\tilde W_i$ is the $i$-th column of $\Pi Y$ and $\tilde
   H_i:=\SPAN\{\tilde W_j:j \in \pi, j \ne i\}$. Note that \[\DIST(\tilde W_i,
   \tilde H_i)\geq \DIST(C_i, H_i''),\] where $H_i''=\SPAN\{C_j, e_\ell,
   \Phi_k:j, \ell \in \pi, j \ne i\}$ and $C_i$ is the $i$-th column of $X$.
   Then, since $H_i''$ has dimension at most $1+2n\delta$ and is independent
   of $C_i$, by lemma \ref{le:concdist}, with e.g.\ $\psi(n)=n-3\de
   n$,
   \[
   \dP \PAR{ s_n ( [\Pi Y]_{|\pi}) %
     \leq \frac{ \veps \si}{\sqrt{\delta}}} %
   \leq n \exp (-\veps \si^2 n),
   \]
   for some $\veps>0$. In particular, as in the proof of
   \eqref{eq:invcomp-star}, one sees that the probability of the event in
   \eqref{eq:pg} intersected with the event $s_1(Y)\leq s(n) $ is bounded by
   $\exp ( - \veps \si^2 n)$, for some new $\veps>0$ and all $n$ large enough.
   This proves the lemma. 
 \end{proof}

 Let us now go back to \eqref{eq:distnormal}. Observe that
 \begin{equation*}\label{eq:distnormal2}
   \ANG{\zeta,W_k}=\sum_{i=1}^n(\bar \zeta_i-\bar \zeta_k)X_{ki}+\ANG{\zeta,v}
   = \ANG{\zeta-\zeta_k\phi,R_k}+\ANG{\zeta,v}, 
 \end{equation*}
 where $v$ is a deterministic vector, and we use the notation $R_k$ for the
 $k$-th row of $X$. With the notation of lemma \ref{le:zetaphi} and lemma 
 \ref{le:normalincomp}, on the event $s_1(Y)\leq s$ we can write
 \begin{equation}\label{eq:distnormal3}
   \DIST(W_k,H_k)   \geq   \frac{|\al|\si \sqrt{n}}{2 s } \,
   |\ANG{\eta,R_k} + w_k|  ,
 \end{equation}
 where $\eta = \eta (\la)$, at $\la=\zeta_k$, and $w_k$ depends only on
 $\zeta$ and therefore is independent of $R_k$. By lemma \ref{le:normalincomp}
 and using $\exp ( -\veps\si^2 n)\ll (\si\sqrt{\de n})^{-1}$, in order to
 prove \eqref{eq:invincomp}, it is sufficient to invoke lemma
 \ref{le:smallball} above. This ends the proof of lemma \ref{le:W_k,H_k}.

 \subsubsection{Proof of Proposition \ref{prop:small}
 }\label{proof:small}
 Take $\de,\rho$ and $s(n)$ as in lemma \ref{le:smallball2}. From
 \eqref{eq:invincomp} and lemma \ref{le:invIncomp} we find, for all $t \geq
 0$,
  \begin{equation*}
  \dP\PAR{\min_{x\in\INCOMP(\de,\rho)}\NRM{Y^\top x}_2 %
    \leq \frac{\rho^2 t }{\sqrt{n}}\ ;\ s_1 ( Y) \leq s }  %
  \leq \frac{C}{\si \sqrt{ \delta^ 3 n } } \PAR{\frac{t s}{|\al|\si }+1 }.
\end{equation*}
Using our choice of $\rho,\delta, s(n)$, we obtain for some new constant $C>0$, for all $t \geq 0$,
\begin{equation}\label{eq:finalincomp}
  \dP\PAR{%
    \min_{x\in\INCOMP(\de,\rho)}\NRM{ Y^\top  x}_2 %
    \leq t %
    \ ;\ s_1(Y) \leq n^\kappa } %
  \leq  \frac{ C (\log(n))^{3/2}} {\si^4\sqrt{n}}  %
  \PAR{\frac{tn^{3\kappa}\sqrt{n}}{|\al|\si\log(n)   }+ 1 }.
\end{equation}
From
\eqref{eq:invcomp-star} we know that
\begin{equation}\label{eq:finalcomp}
  \dP\PAR{%
    \min_{x\in\COMP(\de,\rho)}\NRM{Y^\top x}_2\leq \veps \sqrt{\log(n)} %
    \ ;\ s_1(Y) \leq n^\kappa  } %
  \leq  \exp(-\veps n\si^2(n) ).
\end{equation}
Since $s_1(Y)=\si\sqrt{n}\|M(n)-z\|$, by \eqref{s1bound} one has
\begin{equation}\label{s1ybo}
  \dP\PAR{\,s_1(Y)> s } \leq Cn^2s^{-2}.
\end{equation}
Suppose that \begin{equation}\label{nonspa1}
  \si^2\geq n^{-1/4+\veps},
\end{equation} 
for some $\veps>0$. Choosing e.g.\ $\kappa=1+\ga$, $s=n^{1+\ga}$, with
$\ga=0.1$, and $t=n^{-4}$, then the above expressions and \eqref{eq:decompsn}
imply that
\begin{equation}\label{nonspa2}
  \lim_{n\to\infty}\dP\PAR{\,s_n(Y)> n^{-4} } =0.
\end{equation}
In particular, this proves Proposition \ref{prop:small} under the mild sparsity assumption \eqref{nonspa1}.

In the general case we cannot count on \eqref{nonspa1}, and we only assume
$n\si^2\gg (\log(n))^{6}$. In particular, while the bound \eqref{eq:finalcomp}
is still meaningful, the bound \eqref{eq:finalincomp} becomes useless, even at
$t=0$, if e.g.\ $\si^2\leq n^{-1/4}$. To deal with this problem we use a
further partition of the set $\INCOMP(\de,\rho)$ inspired by the method of
G\"oetze and Tikhomirov \cite{gotze-tikhomirov-new}. More precisely, for fixed
$\veps>0,\kappa>1$, define
\begin{gather}
  \de_1=\frac{\si^2(n)}{\log(n)}\,,\quad
  \rho_1=\frac{\veps\si(n)}{n^\kappa\sqrt{\de_1}}\,,
  \nonumber\\
  \de_{\ell}=\de_{\ell-1}\log(n)\,,\quad
  \rho_{\ell}=\frac{\veps\sqrt{\de_{\ell-1}}}{n^\kappa}\,\rho_{\ell-1}\,, %
  \quad\ell=2,\dots,N,
  \label{derhoseq}
\end{gather}
where $N=N(n)$ is defined as the smallest integer $k\geq 2$ such that $(\log
n)^{k+1}\si^2(n)\geq 1$. Note that $\de_1,\rho_1$ are the choice of values of
$\de,\rho$ from lemma \ref{le:smallball}. Define further
\begin{equation}\label{deLrhoL}
  \de_{N+1}=\frac1{(\log(n))^2}\,,\;\;\text{ and }\;\; \rho_{N+1} %
  =\frac{\veps\sqrt{\de_{N}}\rho_{N}}{n^\kappa}\,.
\end{equation} 
It is immediate to check that this defines an increasing sequence
$\de_\ell=(\log(n))^{\ell-2}\si^2(n)$ and a decreasing sequence $\rho_\ell$,
$\ell=1,\dots,N+1$, such that $\si^2(n)(\log(n))^{-1}\leq \de_{\ell}\leq (\log
n)^{-2}$, $\rho_{N+1}\leq \rho_\ell\leq \veps(\log(n))^{1/2}n^{-\kappa}$, with
$N(n)=\cO(\log(n)/(\log\log(n)))$, and
 \begin{equation}\label{rhoL}
   \rho_{N+1}\geq \exp{(-\eta(\log(n))^2)},
\end{equation} 
for any $\eta>0$, for $n$ large enough. 

As in our previous argument, cf.\ \eqref{nonspa2}, the value of $\kappa$ is
not essential as long as $\kappa>1$, and it may be fixed for the rest of this
proof as e.g.\ $\kappa=2$. Next, using the sequences $(\de_\ell,\rho_\ell)$,
define
\begin{gather}\label{decompo}
  C_0 = \COMP ( \delta_1, \rho_1) \, , \;\;\; C_{N+1} %
  =   \INCOMP ( \delta_{N+1}, \rho_{N+1})\,,\nonumber\\
  C_\ell = \COMP ( \delta_{\ell+1} , \rho_{\ell+1} ) %
  \cap \INCOMP ( \delta_{\ell }, \rho_{\ell })\,,\;\;1 \leq \ell \leq N.
\end{gather}
Note that these sets form a partition $\dS^{n-1}=\cup_{\ell=0}^{N+1}C_\ell$
and therefore, for any $t\geq 0$:
\begin{equation}\label{eq:UnionPartition}
  \dP\PAR{s_n(Y)\leq t} %
  \leq  \sum_{\ell= 0}^{N+1}\dP\PAR{\min_{ x \in  C_\ell}\NRM{Y^\top x}_2\leq t}.
  \end{equation}
Thus, it will be sufficient to show that
\begin{equation}\label{eq:UnionPartition1}
  \lim_{n\to\infty}%
  \sum_{\ell= 0 }^{N+1}%
  \dP\PAR{\min_{x\in C_\ell}\NRM{Y^\top x}_2 \leq e^{-(\log(n))^2} } =0.
  \end{equation}
  For the term $C_0$, since $\de_1,\rho_1$ coincide with the choice of lemma
  \ref{le:smallball2}, one can use the estimate \eqref{eq:finalcomp} and
  \eqref{s1ybo}. For $\ell\geq 1$ we need a refinement of the argument used
  for \eqref{eq:finalincomp} and \eqref{eq:finalcomp}.
  \begin{lemma}\label{le:comp2}
    There exists a constant $\veps > 0$ such that if $\de_\ell,\rho_\ell$ are
    defined by \eqref{derhoseq}, then for all $\ell=1,\dots,N$:
    \begin{equation}\label{bobo1}
      \dP\PAR{ \min_{ x \in  C_{\ell}} \|  Y^\top x \|_2  %
        \leq \veps\rho_\ell\sqrt{\de_{\ell}} \,;\; %
        s_1(Y)\leq n^\kappa;\, %
        s_1(X)\leq n^\kappa} \leq \exp{\PAR{-\veps n (1\wedge\si^2\de_\ell n)}}.
  \end{equation}
\end{lemma}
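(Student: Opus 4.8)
The plan is to follow the Rudelson--Vershynin / G\"otze--Tikhomirov scheme already used in the proofs of lemmas \ref{le:smallball2} and \ref{le:normalincomp}, now adapted to the intermediate shells $C_\ell$ from \eqref{decompo}. Fix $\ell\in\{1,\dots,N\}$ and $x\in C_\ell=\COMP(\de_{\ell+1},\rho_{\ell+1})\cap\INCOMP(\de_\ell,\rho_\ell)$. \emph{Localization:} let $\pi_0$ collect the indices of the $\FLOOR{\de_{\ell+1}n}$ largest coordinates of $x$, so the truncation $x_{\pi_0}$ satisfies $\NRM{x-x_{\pi_0}}_2\le\rho_{\ell+1}$ and $\NRM{x_{\pi_0}}_2\in[1-\rho_{\ell+1}^2,1]$. \emph{Spreading:} since $\rho_{\ell+1}\ll\rho_\ell$ by \eqref{derhoseq}, incompressibility of $x$ at level $(\de_\ell,\rho_\ell)$ forces $x_{\pi_0}/\NRM{x_{\pi_0}}_2$ to be incompressible inside $\dC^{\pi_0}$ at level $(\de_\ell n/|\pi_0|,\rho_\ell/4)=(1/\log n,\rho_\ell/4)$ (else $x$ would be within $\rho_\ell$ of a $\de_\ell n$-sparse vector); by lemma \ref{le:incompspread} there is $\pi\subset\pi_0$ with $|\pi|\ge\de_\ell n/2$ and $\rho_\ell/(4\sqrt{|\pi_0|})\le|x_i|\le\sqrt{2\log n/|\pi_0|}$ for $i\in\pi$. \emph{Reduction:} on the event considered $s_1(Y)\le n^\kappa$, and $n^\kappa\rho_{\ell+1}=\veps\sqrt{\de_\ell}\,\rho_\ell$ by \eqref{derhoseq}, so $\NRM{Y^\top x}_2\ge\NRM{Y^\top x_{\pi_0}}_2-\veps\sqrt{\de_\ell}\,\rho_\ell$; thus it suffices to bound $\dP(\NRM{Y^\top x_{\pi_0}}_2\le2\veps\sqrt{\de_\ell}\,\rho_\ell)$. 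With $Y$ as in \eqref{matrY}, for $k\notin\pi_0$ the $k$-th coordinate of $Y^\top x_{\pi_0}$ is $\xi_k:=\sum_{i\in\pi_0}X_{ik}(x_{\pi_0})_i$, a function of $(X_{ik})_{i\in\pi_0}$ only; hence $(\xi_k)_{k\notin\pi_0}$ are i.i.d.\ and
\[
\NRM{Y^\top x_{\pi_0}}_2^2\ \ge\ \sum_{k\notin\pi_0}|\xi_k|^2\ =\ \NRM{X_{\pi_0^c,\pi_0}\,x_{\pi_0}}_2^2,
\]
where $X_{\pi_0^c,\pi_0}$ is the $(n-|\pi_0|)\times|\pi_0|$ submatrix of $X$.

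Lemma \ref{le:smallball}, applied inside $\dC^{\pi_0}$ with $n$ replaced by $|\pi_0|=\de_\ell n\log n$ and the incompressibility data above, gives for every $k\notin\pi_0$ and $t\ge0$
\[
\dP\PAR{|\xi_k|\le t}\ \le\ \frac{C}{\sqrt{\si^2\de_\ell n}}\PAR{\frac{t\sqrt{\log n}}{\si\,\rho_\ell}+1},
\]
using $\sqrt{\log n}/(\si\sqrt{|\pi_0|})=1/\sqrt{\si^2\de_\ell n}$. \textbf{Regime 1: $\si^2\de_\ell n\ge C_0$}, with $C_0$ a large constant. Then the ``$+1$'' term is $\le C/\sqrt{C_0}$, and with $t_0=c\,\si\rho_\ell/\sqrt{\log n}$, $c$ small, one gets $\dP(|\xi_k|\le t_0)\le p$ for a small fixed $p$. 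Tensorizing over the $\ge n/2$ i.i.d.\ variables $(\xi_k)_{k\notin\pi_0}$ (if $\sum_{k\notin\pi_0}|\xi_k|^2\le t_0^2 n/4$ then $|\xi_k|\le t_0$ for at least $n/4$ indices) gives $\dP(\sum_{k\notin\pi_0}|\xi_k|^2\le t_0^2 n/4)\le\binom{n}{\FLOOR{n/4}}p^{n/4}\le e^{-\veps_1 n}$. Since $\si^2n\gg(\log n)^6\gg\de_{\ell+1}$ one has $t_0^2 n/4\gg\de_\ell\rho_\ell^2$, so after shrinking $\veps$ the event $\{\NRM{Y^\top x_{\pi_0}}_2\le2\veps\sqrt{\de_\ell}\,\rho_\ell\}$ lies in the above and has probability $\le e^{-\veps_1 n}$. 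One then passes from $x$ to all of $C_\ell$ by a union bound over the $\binom{n}{\FLOOR{\de_{\ell+1}n}}$ choices of $\pi_0$ and a $\rho_{\ell+1}$-net of the unit ball of $\dC^{\pi_0}$: by \eqref{eq:hyp_sig2} and $\rho_{N+1}\ge e^{-\eta(\log n)^2}$ the net has cardinality $e^{O(\eta(\log n)^2\de_{\ell+1}n)}$, and since $\de_{\ell+1}\le(\log n)^{-2}$ both $h(\de_{\ell+1})n$ and $(\log n)^2\de_{\ell+1}n$ are $O(n)$ with small constant, so choosing $\eta$ small makes the total count $\le e^{\veps_1 n/2}$. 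This yields \eqref{bobo1} with $e^{-\veps_1 n/2}$, which is the claimed bound since $1\wedge\si^2\de_\ell n=1$ here.

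\textbf{Regime 2: $\si^2\de_\ell n<C_0$.} Then $\de_{\ell+1}=\de_\ell\log n<C_0\log n/(\si^2 n)$, whence $|\pi_0|<C_0\log n/\si^2$; this also forces $\ell$ to be small, so $\rho_\ell$ is only polynomially small and the combinatorial count of the previous paragraph is merely quasi-polynomial, of order $e^{O((\log n)^2/\si^2)}$. Here single coordinates $\xi_k$ may vanish with non-negligible probability because the entries are sparse, so one replaces the tensorized anti-concentration by a Bernstein estimate for the sum of the independent nonnegative terms $|\xi_k|^2$: their common mean is $\ge\si^2/4$, the variance input $\sum_k\dE|\xi_k|^4$ is controlled by the flatness bound $\max_i|(x_{\pi_0})_i|^2\le 2\log n/|\pi_0|$ from the spreading step, and the $|\xi_k|^2$ are bounded deterministically using $s_1(X)\le n^\kappa$ (or, in the general case, a preliminary truncation of the entries as in the centralization step of the proof of theorem \ref{th:singvals}). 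Balancing this Bernstein bound against the quasi-polynomial count, and passing from the net to all of $C_\ell$ as before, yields \eqref{bobo1} with a bound of the form $\exp(-\veps_2\,n(1\wedge\si^2\de_\ell n))$.

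The conceptual core --- localization, spreading, reduction to the rectangular block $X_{\pi_0^c,\pi_0}$, and tensorized anti-concentration --- is Regime 1 and is routine. The hard part is Regime 2: anti-concentration of single coordinates degenerates there, and one must delicately balance the combinatorial entropy $\binom{n}{\de_{\ell+1}n}$ against a concentration estimate for $\NRM{X_{\pi_0^c,\pi_0}x_{\pi_0}}_2^2$ around its mean that is only polynomially strong in $\si$; this is exactly where the variance hypothesis \eqref{eq:hyp_sig2} and the precise choice of the sequences $(\de_\ell,\rho_\ell)$ in \eqref{derhoseq}--\eqref{deLrhoL} are used, following the method of G\"otze and Tikhomirov \cite{gotze-tikhomirov-new}.
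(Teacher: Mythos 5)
Your overall scheme — localize to the $\FLOOR{\de_{\ell+1}n}$ largest coordinates $\pi_0$, use incompressibility at level $(\de_\ell,\rho_\ell)$ to get a spread set $\pi\subset\pi_0$, pass to the off-diagonal block $X_{\pi_0^c,\pi_0}$ by evaluating $Y^\top x_{\pi_0}$ at coordinates $k\notin\pi_0$, and then combine per-coordinate anti-concentration with a tensorization and a net/union-bound over $\pi_0$ and a discretization of the unit ball of $\dC^{\pi_0}$ — is the right skeleton, and your reduction to $\NRM{X_{\pi_0^c,\pi_0}x_{\pi_0}}_2$ is equivalent to (and even slightly leaner than) the paper's $\|Y^\top x\|_2\geq\DIST(X^\top x,H)$ with $H=\SPAN(e_i,i\in\pi)$, since it dispenses with the inner net over $u\in H$ that Corollary \ref{cor3:halasz} uses. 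Regime 1 of your argument (when $\si^2\de_\ell n$ is large) is essentially correct.

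The genuine gap is Regime 2. You propose a Bernstein estimate directly on $S:=\sum_{k\notin\pi_0}|\xi_k|^2$, controlling the variance by flatness of $x_{\pi_0}$ and the a.s.\ bound by $s_1(X)\leq n^\kappa$ or by truncating the entries. But the a.s.\ bound that Bernstein's ``$Ms$'' term needs is $|\xi_k|^2\lesssim 1/(\de_\ell n)$, and no such bound is available: on $\{s_1(X)\leq n^\kappa\}$ one only has $|\xi_k|^2\leq n^{2\kappa}$, and even after truncating the entries to $O(1)$ (model~B) the best deterministic bound is $|\xi_k|\leq\max_i|X_{ik}|\,\sum_{i\in\pi_0}|x_i|\lesssim\sqrt{|\pi_0|}$, i.e.\ $|\xi_k|^2\lesssim\de_{\ell+1}n=\de_\ell n\log(n)$. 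When $\de_\ell n\gtrsim 1/\sqrt{\log n}$, the ``$Ms$'' term dominates and the Bernstein exponent degrades to $\sim n\si^2/M\sim\si^2/(\de_\ell\log n)$, which falls short of the target $n\si^2\de_\ell n=\si^2\de_\ell n^2$ by a factor $(\de_\ell n)^2\log n\gg1$. This range of $\de_\ell n$ is reached in Regime 2 for sparse $\si$, so your proposed bound does not close. The paper avoids this entirely: instead of controlling the unbounded random variables $|\xi_k|^2$, it uses the weak per-coordinate anti-concentration of lemma~\ref{le:GT}, namely $\dP\PAR{|\xi_k|\leq c_0\rho/\sqrt n}\leq 1-c_0\,(n\si^2\de_\ell\wedge1)$, and then applies Bennett's inequality to the bounded Bernoulli indicators $\IND_{\{|\xi_k|>c_0\rho/\sqrt n\}}$, for which there is no a.s.\ bound issue. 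This is exactly corollary~\ref{cor2:halasz}; the paper's proof of lemma~\ref{le:comp2} then simply invokes corollary~\ref{cor3:halasz} and carries out the two nets. Relatedly, your balancing of the entropy against the anti-concentration rate is handled with informal appeals to ``choosing $\eta$ small''; the paper encodes the required inequality precisely as \eqref{dell1}, which is the place where \eqref{eq:hyp_sig2} and the specific choice \eqref{derhoseq} actually enter, and you should verify it rather than assert it.
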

\begin{proof}
  Let $x \in \INCOMP(\delta_\ell,\rho_\ell)$ and assume that $\SUPP( x )
  \subset \pi$ with $|\pi | =\de_{\ell+1}n$. Let $W_i = R_i ^\top + ( -
  D_{ii}+ mn - z\sqrt{n}) e_i $ be the $i$-th column of $Y^\top$ and $H =
  \SPAN ( e_i , i \in \pi)$, and write
  \begin{align}
    \|  Y^\top x \|_2 & = \| \sum_{i \in \pi} W_i x_i \|_2 \geq \DIST \Big(\sum_{i \in \pi} W_i x_i  , H \Big) \nonumber\\
    &= \DIST \Big(\sum_{i \in \pi} R_i x_i , H \Big) = \DIST \PAR{ X^\top x ,
      H } .
    \label{htk}
  \end{align}
  Next, we want to apply corollary \ref{cor3:halasz}. Taking $s=n^\kappa$,
  from \eqref{rhoL} one has that $\log ( \frac{ s} { \rho_\ell^2 \delta_\ell }
  )\ll (\log(n))^2$. Therefore, using $n\si^2(n)\gg(\log(n))^3$,
  $\de_{\ell+1}=(\log(n))^{\ell-1}\si^2(n)$, one has
  \begin{equation}\label{dell1}
    \de_{\ell+1}\leq \frac{  \eta ( 1 \wedge \si^2  \delta_\ell n ) } { \log ( \frac{ s} { \rho^2_\ell \delta_\ell } )} ,
  \end{equation}
  for any $\eta>0$, and all $n$ large enough. Hence, if $c_1$ is as in
  corollary \ref{cor3:halasz}, then
  \[
  \dP\PAR{\NRM{Y^\top x}_2\leq c_1\rho_\ell\sqrt{\de_\ell}\,;\, %
    s_1(X)\leq n^\kappa} %
  \leq \exp{ \PAR{- c_1 n ( 1 \wedge \si^2 \de_\ell n )} }.
  \]
  If $\eta \in(0, 1)$ and $B_H = H \cap \INCOMP(\de_\ell,\rho_\ell)$, there
  exists an $\eta$-net $N$ of $B_H$, of cardinality $( 3 / \eta )^{ k }$,
  $k=\de_{\ell+1}n$, such that
  \[
  \min_{ x : \DIST( x, B_H) \leq \eta } \| Y^\top x   \|_2 \geq  \min_{ x \in N } \| Y^\top x  \|_2 - \eta s_1(Y).
  \]
  Hence, if we take
  $\eta=\rho_{\ell+1}=\frac{\veps\rho_\ell\sqrt{\de_\ell}}{n^\kappa}$, from
  the union bound, and then using \eqref{dell1}:
  \begin{align*}
    &\dP \PAR{\min_{x:\DIST(x,B_H)\leq\rho_{\ell+1}}%
      \NRM{Y^\top x}_2\leq(c_1-\veps)\rho_\ell\sqrt{\de_\ell}\,;\, %
      s_1(Y)\leq n^\kappa;\, s_1(X)\leq n^\kappa} \\
    &\qquad
    \leq\exp{\PAR{\de_{\ell+1}n\log\PAR{\frac{6n^\kappa}{\rho_\ell\sqrt\delta_\ell}}%
        - c_1n(1\wedge\si^2\delta_\ell n)}} %
    \leq \exp{\PAR{-\frac{c_1}{2}n(1\wedge\si^2\delta_\ell n)}}.
  \end{align*}
  Finally, summing over all choices of $|\pi|=\de_{\ell+1} n$, one finds
  \begin{align*}
    &
    \dP \PAR{  \min_{ x \in  C_\ell} \|  Y^\top x \|_2   \leq  (c_1-\veps)  \rho_\ell   \sqrt{\de_\ell}\, ; \, s_1 ( Y ) \leq n^\kappa;\, s_1(X)\leq n^\kappa  }  \\
    &\qquad
    \leq  \exp{\PAR{ n h(\delta_{\ell+1}) ( 1 + o(1) ) - \frac{c_1}2 n ( 1 \wedge \si^2  \delta_\ell n )}  },
  \end{align*}
  with $h(\delta) = - \delta \log ( \delta) - ( 1 - \de) \log ( 1 - \de)$.
  Since $\de_{\ell+1}\log \de_{\ell+1}\ll ( 1 \wedge \si^2 \delta_\ell n )$,
  the above expression is bounded by $e^{- \frac{c_1}3 n ( 1 \wedge \si^2
    \delta_\ell n )}$ for all $n$ large enough. The conclusion follows by
  choosing e.g.\ $\veps=\frac{c_1}3$.
\end{proof}

Let us now conclude the proof of Proposition \ref{prop:small}. Observe that by
\eqref{s1ybo} we may assume that $s_1(Y)\leq n^\kappa$. Moreover, the same
argument proving \eqref{s1ybo} proves the same bound for $s_1(X)$. Thus, one
may assume that $s_1(X)\leq n^\kappa$ as well at the price of adding a
vanishing term to \eqref{eq:UnionPartition1}. Using \eqref{eq:finalcomp} (for
the case $\ell=0$) and \eqref{bobo1} (for the case $\ell=1,\dots,N$), together
with the simple bounds $\veps\rho_\ell\sqrt{\de_\ell}\gg e^{-(\log(n))^2}$, $n
( 1 \wedge \si^2 \delta_\ell n )\gg (\log
n)^2$, 
one has
\begin{equation}\label{htk1}
  \lim_{n\to\infty}\sum_{\ell= 0 }^{N}  \dP\PAR{ \min_{ x \in  C_\ell} \|  Y^\top x \|_2  \leq  
    e^{-(\log(n))^2} 
  } =0.
\end{equation}
Thus, to end the proof of \eqref{eq:UnionPartition1}, it remains to prove 
\begin{equation}\label{bobo2}
\dP\PAR{ \min_{ x \in  C_{N+1}} \|  Y^\top x \|_2  \leq  
 e^{-(\log(n))^2}
 } \to 0.
  \end{equation}
To prove \eqref{bobo2}, observe that  lemma \ref{le:invIncomp} and lemma \ref{le:zetaphi}, as in \eqref{eq:distnormal3},
 imply that for all $t\geq 0$:
\begin{equation*}\label{bobo3}
\dP\PAR{ \min_{ x \in  C_{N+1}} \|  Y^\top x \|_2  \leq  \frac{t\rho_{N+1}}{\sqrt{n}}
}
  \leq \frac{2}{\de_{N+1} n} \sum_{k=1}^n \dP \PAR{ |\ANG{\eta^{(k)},R_k} + w_k|\leq \frac{2ts_1(Y)}{|\al|\si \sqrt{n}}
  },
  \end{equation*}
  where $w_k\in\dC$ and $\eta^{(k)}\in \dS^{n-1}$ denote suitable random
  variables independent of $R_k$, the $k$-th row of $X$. Thanks to
  \eqref{s1ybo}, one can safely assume that $s_1(Y)\leq n^{\kappa}$. By
  exchangeability it is enough to consider the first row $R$ of $X$, and the
  associated random variables $\eta,w$. Using $\de_{N+1} =(\log(n))^{-2}$,
  taking $t=e^{-(\log(n))^2}\sqrt{n}/\rho_{N+1}$, and using $ 2n^\kappa
  e^{-(\log(n))^2} / ( |\al|\si \rho_{N+1} ) \leq e^{-\frac12(\log(n))^2} /
  |\al | $, for $n$ large, it is then sufficient to prove
\begin{equation}\label{bobo4}
  \lim_{n\to\infty}(\log(n))^2\,\dP \PAR{ |\ANG{\eta,R} + w|\leq |\al|^{-1}e^{-\frac12(\log(n))^2}
  }=0.
\end{equation}
By conditioning on the event $\eta\in \INCOMP(\de_{N+1},\rho_{N+1})$, lemma
\ref{le:smallball} implies that
\begin{align}
&\dP \PAR{ |\ANG{\eta,R} + w|\leq |\al|^{-1}e^{-\frac12(\log(n))^2};\eta\in \INCOMP(\de_{N+1},\rho_{N+1})}
  \nonumber\\
  &\qquad \quad
  \leq \frac{C}{\si \sqrt{\delta_{N+1}n}}
  \PAR{\frac{e^{-\frac12(\log(n))^2}{\sqrt{n}}}{|\al|\rho_{N+1}}+ 1  }\leq \frac{2C\log(n)}{\si \sqrt{n}},
\label{bobo5}
\end{align}
where the last bound holds for all $z\in\dC\setminus \La$, for $n$ sufficiently large, so that $\al$ is bounded away from $0$. Since by assumption \eqref{eq:hyp_sig2} we have $\si\sqrt{n}\gg (\log(n))^3$, this proves 
\eqref{bobo4}, provided that  
\begin{equation}\label{bobo6}
 \lim_{n\to\infty}(\log(n))^2\,
 \dP \PAR{\eta\in \COMP(\de_{N+1},\rho_{N+1}) 
 }=0.
  \end{equation}
As in the proof of lemma \ref{le:normalincomp}, cf.\ \eqref{eq:pg}, $\eta\in \COMP(\de_{N+1},\rho_{N+1})$ implies
\[\min_{ x \in \COMP(\de_{N+1},\rho_{N+1})} \NRM{\Pi Y x}_2 =0,\]
where $\Pi $ is the orthogonal projection on $(\SPAN\{\phi,e_1\})^\perp$.
Since $\COMP(\de_{N+1},\rho_{N+1})=\cup_{\ell=0}^NC_\ell$, \eqref{bobo6} may be reduced to the estimate
\begin{equation}\label{bobo7}
\lim_{n\to\infty}(\log(n))^2\sum_{\ell= 0 }^{N}  \dP\PAR{ \min_{ x \in  C_\ell} \|  \Pi Y x \|_2=0} 
 =0.
  \end{equation}
To prove \eqref{bobo7}, one repeats the argument in the proof of lemma \ref{le:comp2}. More precisely, \eqref{htk} 
is now replaced by 
\[
\| \Pi Y x \|_2  \geq \DIST \PAR{ X x , H' }
\]
where $H'=\SPAN\{H, \phi,e_1\}$. Since $H'$ has dimension at most
$\de_{\ell+1}n +2$, the same arguments apply here. As in the proof of
\eqref{htk1}, this implies \eqref{bobo7}. This concludes the proof of
Proposition \ref{prop:small}.

\subsection{Moderately small singular values}

\begin{lemma}[Moderately small singular values] \label{prop:Msmall} %
  Assume \eqref{eq:hyp_sig2} and \eqref{eq:hyp_tightness1}. Let $u(n)=n/[(\log
  n)^5]$. There exists $c_0$ such that for any $z \in \dC$,
  a.s.\ for $n \gg 1$
  \[
  s_{n - i } ( M(n) - z) \geq c_0 \frac{i}{n},\qquad  u(n) \leq i \leq n-1,
  \]
\end{lemma}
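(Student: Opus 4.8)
The plan is to deduce the lower bound on $s_{n-i}(M-z)$ from the classical Tao--Vu ``negative second moment'' identity applied to a large submatrix, so that no control of $s_n(M-z)$ itself (and, in particular, no exclusion of the set $\La$) is needed. First I would put $Y=\si\sqrt{n}(M(n)-z)$ as in \eqref{matrY}, so that $s_j(Y^\top)=s_j(Y)=\si\sqrt{n}\,s_j(M-z)$, and recall from the proof of Lemma \ref{le:smallball2} that the $\ell$-th column of $Y^\top$ is $W_\ell=R_\ell^\top+c_\ell e_\ell$, where $R_\ell=(X_{\ell1},\dots,X_{\ell n})$ is the $\ell$-th row of $X$ and $c_\ell$ is a scalar; since $W_{\ell'}$ depends only on the $\ell'$-th row of $X$ and the rows of $X$ are independent, the family $\{W_{\ell'}:\ell'\neq\ell\}$ is independent of $R_\ell$. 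Then, fixing $i$ with $u(n)\le i\le n-1$, I would set $k=\CEIL{i/2}$ and let $B$ be the $n\times(n-k)$ submatrix of $Y^\top$ formed by the columns $W_1,\dots,W_{n-k}$. Deleting columns cannot increase singular values (Cauchy interlacing applied to $B^*B$ as a leading principal submatrix of $(Y^\top)^*Y^\top$), so $s_{n-i}(Y^\top)\ge s_{n-i}(B)$, and it is enough to bound $s_{n-i}(B)$ from below.

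The key step is the negative second moment identity: with $H_\ell=\SPAN\{W_{\ell'}:\ell'\le n-k,\ \ell'\neq\ell\}$, if $B$ has full column rank then
\[
\sum_{j=1}^{n-k}s_j(B)^{-2}=\TR\big((B^*B)^{-1}\big)=\sum_{\ell=1}^{n-k}\DIST(W_\ell,H_\ell)^{-2}.
\]
I would introduce the event $\cG_i$ that $\DIST(W_\ell,H_\ell)\ge\veps\si\sqrt{k}$ for all $\ell\le n-k$, where $\veps>0$ is the constant of Lemma \ref{le:concdist}. On $\cG_i$ the columns $W_1,\dots,W_{n-k}$ are linearly independent, so the identity applies and $\sum_{j=1}^{n-k}s_j(B)^{-2}\le n/(\veps^2\si^2 k)$; keeping only the indices $n-i\le j\le n-k$ and using that $s_j(B)$ is nonincreasing gives $(i-k+1)\,s_{n-i}(B)^{-2}\le n/(\veps^2\si^2 k)$, whence, since $i-k+1\ge i/2$ and $k\ge i/2$, one gets $s_{n-i}(B)^2\ge \veps^2\si^2 i^2/(4n)$. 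Thus, on $\cG_i$, $s_{n-i}(M-z)=s_{n-i}(Y^\top)/(\si\sqrt{n})\ge (\veps/2)\,i/n$, which is the assertion with $c_0=\veps/2$.

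It then remains to estimate $\dP(\cG_i^c)$ and sum over $i$ and $n$. For fixed $\ell$, since $c_\ell e_\ell\in H_\ell+\dC e_\ell$, one has $\DIST(W_\ell,H_\ell)\ge\DIST(R_\ell^\top,H_\ell+\dC e_\ell)$, and $H_\ell+\dC e_\ell$ is a subspace of dimension at most $n-k$ that is independent of $R_\ell$. Conditioning on the rows of $X$ other than $R_\ell$ and applying Lemma \ref{le:concdist} with $\psi(n)=k$ (note $k\ge u(n)/2\to\infty$, $k<n$, and $n-\DIM(H_\ell+\dC e_\ell)\ge k$) gives $\dP(\DIST(W_\ell,H_\ell)\le\veps\si\sqrt{k})\le e^{-\veps\si^2 k}+e^{-\veps k^2/n}$. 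A union bound over the at most $n$ values of $\ell$ and the at most $n$ values of $i$ in $[u(n),n-1]$, using $k\ge u(n)/2$, then yields
\[
\dP\Big(\bigcup_{u(n)\le i\le n-1}\cG_i^c\Big)\le n^2\big(e^{-\veps\si^2 u(n)/2}+e^{-\veps u(n)^2/(4n)}\big).
\]
With $u(n)=n/(\log n)^5$, assumption \eqref{eq:hyp_sig2} gives $\si^2 u(n)/\log(n)=n\si^2/(\log(n))^6\to\infty$, so the first term is $o(n^{-A})$ for every fixed $A$, while $u(n)^2/n=n/(\log(n))^{10}\to\infty$ makes the second term negligible; hence the probability above is $\cO(n^{-2})$. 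By Borel--Cantelli, a.s.\ for all $n$ large enough the event $\cG_i$ holds simultaneously for every $u(n)\le i\le n-1$, and combined with the deterministic bound above this proves the lemma.

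The main obstacle I anticipate is this uniformity: the distance lower bound must hold for all the columns of the submatrix and all values of $i$ in the range at once, so the small-ball estimate of Lemma \ref{le:concdist} must beat an $n^2$ combinatorial factor and still be summable in $n$. This is exactly what the sparsity hypothesis \eqref{eq:hyp_sig2}, calibrated against the threshold $u(n)=n/(\log n)^5$, provides, since $n\si^2\gg(\log n)^6$ forces $e^{-\veps\si^2 u(n)}$ to decay faster than any power of $n$. A minor technical point, to be handled exactly as in the proof of Lemma \ref{le:smallball2}, is that the subspace $H_\ell+\dC e_\ell$ is random; since it is independent of $R_\ell$, one simply conditions on it before invoking Lemma \ref{le:concdist}.
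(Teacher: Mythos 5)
Your proof is correct and matches the paper's argument in all essentials: both pass to a submatrix (your columns of $Y^\top$ are exactly the paper's rows of $Y' = $ the top $n-\CEIL{i/2}$ rows of $\si\sqrt{n}(M-z)$), apply Cauchy interlacing, invoke the Tao--Vu negative second moment identity, lower-bound the distances via Lemma \ref{le:concdist} after factoring out the deterministic diagonal shift $c_\ell e_\ell$, and close with a union bound over $(i,\ell)$ and Borel--Cantelli using \eqref{eq:hyp_sig2} to beat the $n^2$ factor. The one small point to tidy is the application of Lemma \ref{le:concdist} with $\psi(n)=k=\CEIL{i/2}$ depending on $i$: it is cleaner (and what the paper does, with $\psi(n)=u(n)/4$) to fix $\psi(n)=u(n)/2$ once and for all, noting that $\DIM(H_\ell+\dC e_\ell)\le n-k\le n-u(n)/2$ and that the threshold $\veps\si\sqrt{n-\DIM(H)}\ge\veps\si\sqrt{k}$ is then automatic; this avoids any worry about the uniformity of $\veps$ over a family of choices of $\psi$.
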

\begin{proof}
  We follow the original proof of Tao and Vu \cite{tao-vu-cirlaw-bis} for the
  circular law. To lighten the notations, we denote by $s_1\geq\cdots\geq s_n$
  the singular values of $M-zI$. We fix $u(n) \leq i \leq n-1$, and consider
  the matrix $Y'$ formed by the first $m:=n- \CEIL{i/2}$ rows of $\si
  \sqrt{n}(M-z)$. Let $s_1'\geq\cdots\geq s_m'$ be the singular values of
  $Y'$. By the Cauchy-Poincar\'e interlacing, we get
  \[
  \si^{-1}n ^{-1/2} s'_{n-i} \leq  s_{n-i}
  \]
  (see e.g.\ \cite[corollary 3.1.3]{MR1288752}).  Next, by \cite[lemma A4]{tao-vu-cirlaw-bis}, we have
  \[
  s'^{-2}_1  + \cdots + s'^{-2}_{n -  \CEIL{i/2}} %
  = \DIST_1^{-2}+\cdots+\DIST_{n -\CEIL{i/2}}^{-2}, 
  \]
  where $\DIST_j:=\DIST(R'_j,H_j')$ is the distance from the
  $j^\text{th}$ row $R'_j$ of the matrix $Y'$ to $H'_j$, the subspace spanned by all other
  rows of $Y'$. In particular, we have
  \begin{equation}\label{eq:stodist}
    \frac{i}{2n} s^{-2}_{n-i} 
    \leq \frac{i\si^2 }{2}s'^{-2}_{n-i}
    \leq  \si^2\sum_{j=n-\CEIL{i}}^{n-\CEIL{i/2}}s_{j}'^{-2}
    \leq \si^2 \sum_{j=1}^{n-\CEIL{i/2}}\DIST_{j}^{-2}. 
  \end{equation}
  Now, we note that 
  \[
  \DIST_j =\DIST(R'_j,H'_j) \geq \DIST ( R_j , H_j ),
  \]
  where $H_j = \SPAN \{ H'_j , e_j \}$ and $R_j$ is the $j^\text{th}$ row of
  $X$. 
  Now, $H_j$ is independent of $R_j$ and $\DIM(H_j)\leq n-\frac{i}{2} +1
  \leq n-\frac14\,u(n)$. 
  We may use  lemma \ref{le:concdist} with the choice $\psi(n)=\frac14\,u(n)$. 
  By assumption \eqref{eq:hyp_sig2} 
  one has
  \begin{equation}\label{v(n)}
    \min\{\si^2(n)\psi(n),\psi(n)^2/n\}\gg \log(n).
  \end{equation}
  By the union bound, this implies 
  \begin{equation}\label{lemmsi}
    \sum_{n\geq 1}\dP\PAR{\bigcup_{i=u(n)}^{n-1}\bigcup_{j=1}^{n-\CEIL{i/2}}
      \BRA{\DIST_j %
        \leq\frac{\si \sqrt{i}}{2\sqrt{2}}} }<\infty.
  \end{equation}
  Consequently, by the first Borel-Cantelli lemma, we obtain that a.s.\ for
  $n\gg1$, all $u(n)\leq i \leq n-1$, and all $1 \leq j \leq n - \CEIL{i/2}$,
  \[
  \DIST_j %
  \geq \frac{\si \sqrt{i}}{2\sqrt{2}} %
  \geq \frac{\si \sqrt{i}}{4}
  \]
  Finally, \eqref{eq:stodist} gives $s^{2}_{n-i}\geq (i^2)/(32n^2)$, i.e.\ the
  desired result with $c_0 := 1/(4\sqrt{2})$.
\end{proof}

\subsection{Proof of  theorem \ref{th:unifint}}
\label{se:proof:th:unifint}
Let us choose $J(t)=t^2$. By lemma \ref{le:large}, it is sufficient to prove
that
\[
\lim_{t \to \infty}\limsup_{n \to \infty}%
\dP\PAR{\int_0^1\!J(|\log s|)\,d\nu_{M(n)-z}(s) >t}=0.
\]
We shall actually prove that if $n_*$ is the last $i$ such that $s_{n-i}(M - z
) \leq 1$, then there exists $C > 0$ such that
\begin{equation}\label{eq:UIprob}
  \lim_{n \to \infty} %
  \dP\PAR{\frac{1}{n}\sum_{i =0 }^{n_*}J(|\log s_{n-i}(M-z)|)\leq C} = 1.
\end{equation}
With the notation, of lemma \ref{prop:Msmall}, let $F_n$ be the event, that
$s_{n} ( M - z) \geq e^{-(\log(n))^2}$ and that for all $u(n) \leq i \leq n-1$,
$s_{n - i } ( M- z) \geq c_0\, i / n $. Then by Proposition \ref{prop:small}
and lemma \ref{prop:Msmall}, $F_n$ has probability tending to $1$. Also, if
$F_n$ holds, writing $s_{n - i }$ for $s_{n - i } ( M- z)$ one has
\begin{align*}
  \frac{1}{n} \sum_{i =1 }^{n_*} J(|\log s_{n-i}|) & \leq \frac{1}{n} \sum_{i
    =1}^{u(n)} J((\log(n))^2) %
  + \frac{1}{n}\sum_{i = 1}^n  J(|\log(c_0\,i/n)|) \\
  & = \frac{u(n)(\log(n))^4}{n}+ \frac{1}{n} \sum_{i =1}^n
  (\log (n/i))^{2}.
\end{align*}
This last expression is uniformly bounded since $u(n)=n/[(\log(n))^5]$ and the
sum is approximated by a finite integral. This concludes the proof of
\eqref{eq:UIprob}.

\section{Limiting distribution: Proof of theorems \ref{th:naturemu} and
  \ref{th:propmu}}
\label{se:proof:th:naturemu+th:propmu}

\subsection{Brown measure}
\label{subsec:Brown}

In this paragraph, we recall classical notions of operator algebra. Consider the pair $(\cM,
\tau)$, where $\cM$ is a von Neumann algebra and $\tau$ is a normal,
faithful, tracial state on $\cM$. For $a \in \cM$, set $|a|= \sqrt{a^*a}$.
For a self-adjoint element  $a\in \cM$, we denote by $\mu_a$ the spectral
measure of $a$, that is the unique probability measure on the real line
satisfying, for any $z \in \dC_+$,
\[
\tau ( ( a - z )^{-1}  ) = \int\!\frac{d\mu_a(t)}{t-z} = S_{\mu_a} (z)  . 
\] 
The Brown measure \cite{MR866489} of $a \in \cM$ is the probability measure
$\mu_a$ on $\dC$, which satisfies for almost all $z \in \dC$,
\[
\int\!\log|z-\la|\,d\mu_a(\la) =  \int\!\log(t)\,d\mu_{|a - z |}(t) 
\]
In distribution, it is given by  the formula 
\begin{equation} \label{eq:defbrown}
\mu_a = \frac{1}{2\pi} \Delta  \int\!\log(t)\,d\mu_{|a-z|}(t). 
\end{equation}
Our notation is consistent: firstly, if $a$ is self-adjoint, then the Brown
measure coincides with the spectral measure; secondly, if $\cM = \cM_n (\dC)$
and $\tau = \frac{1}{n}\TR$ is the normalized trace on $\cM_n (\dC)$, then the
Brown measure of $A$ is simply equal to $\mu_A=\frac1n\sum_{i=1}^n\delta_{\la_i(A)}$.

The $\star$-distribution of $a \in \cM$ is the collection of all its
$\star$-moments $\tau (a^{\veps_1} a^{\veps_2} \cdots a^{\veps_n})$ where
$a^{\veps_i}$ is either $a$ or $a^*$. The element $c \in \cM$ is circular if
it has the $\star$-distribution of $(s_1+is_2)/\sqrt{2}$ where $s_1$ and $s_2$
are free semi-circular variables. We refer to Voiculescu, Dykema and Nica
\cite{MR1217253} for a complete treatment of free non-commutative variables.

As explained in Haagerup and Schultz \cite{MR2339369}, it is possible to
extend these notions to unbounded operators. Let $\bar \cM$ be the
set of closed, densely defined operators $a$ affiliated with $\cM$ satisfying
\[
 \int\!\log(1+t)\,d\mu_{|a|}(t) < +\infty. 
\]
In particular the normal
operator $g$ in theorem \ref{th:naturemu} is an element of $ \bar \cM$. Also,
note that if $a \in \bar \cM$ and $z \in \dC$, then $a - z \in \bar \cM$.
For all $ a \in \bar \cM$, Haagerup and Schultz check that it is possible to
define the Brown measure by \eqref{eq:defbrown}. 

\subsection{Proof of theorem \ref{th:naturemu}}

From theorem \ref{th:eigenvals}, $\mu$ is given by the formula, in distribution,
\[
\mu = \frac{1}{2\pi} \Delta  \int\!\log(t)\,d\nu_{z}(t).
\]
Hence in view of \eqref{eq:defbrown}, the statement of theorem \ref{th:naturemu} will follow once we prove that for all $z \in \dC$, 
\[
\nu_{z} = \mu_{|c + g - z|}.
\]
To prove the latter identity, assume that $\bx$ has distribution $\cN(0,K)$, i.e.\
$(X_{ij})_{1 \leq i,j \leq n}$ are i.i.d.\ centered Gaussian variable with
covariance $K$, and $\si = 1$. Let $(G_i)_{i \geq 1}$ be an independent sequence of i.i.d.\ Gaussian random variables, $G_i\sim\cN(0,K)$. We define the diagonal matrix
$D'=\mathrm{diag}(G_1,\ldots,G_n)$, which is independent of $X$ and set $M' = \frac{1}{\sqrt{n}}X-D'$. The proof of theorem \ref{th:singvals} shows that $\nu_{M-z}$, $\nu_{M'-z}$ both converge a.s. to $\nu_z$. However, it is a consequence of  Capitaine and Casalis \cite[proposition
  5.1]{MCMC04} or Anderson, Guionnet and Zeitouni \cite[theorem 5.4.5]{AGZ}, that  $\dE \nu_{M'-z}$ converges weakly to $\mu_{|c + g - z|}$ (\cite[theorem 5.4.5]{AGZ} is stated for Wigner matrices but the result can be lifted to our case, see \cite[exercice 5.4.14]{AGZ}).

\subsection{Quaternionic resolvent}

Here we give another characterization of the Brown measure $\mu_{c+g}$. 
We use the same linearization procedure as in section \ref{linearization} to develop a 
quaternionic resolvent approach for the Brown measure. 
This approach was  introduced in the mathematical physics literature
\cite{FZ97,Gudowska-Nowak,rogers2010} for the analysis of non-hermitian random matrices; 
see also \cite{bordenave-caputo-chafai-heavygirko} and \cite[\S 4.6]{bordenave-chafai-changchun}. As above, we
consider the operator algebra $(\bar \cM, \tau)$ associated to the von Neumann
algebra $(\cM, \tau)$. If elements of $\cM$ act on a Hilbert space $H$, we define the Hilbert space $H_2 = H \times \dZ/2\dZ$
and for $x = (y,\veps) \in H_2$, we set $\hat x = ( y, \veps +1 )$. In
particular, this transform is an involution $\hat {\hat x} = x$. There is the
direct sum decomposition $ H_2 = H_0 \oplus H_1$ with $H_\veps = \{ x =
(y,\veps) : y \in H \}$. 
An operator $b$ acting on $H_2$ has the $2\times 2$ representation 
\begin{equation}\label{bij}
b=\begin{pmatrix} b_{00}   & b_{01}  \\  
 b_{10} & b_{11}
  \end{pmatrix}
\end{equation}
where $b_{ij}$ are operators on $H$.
That is,  if $x=(y_0,0)+(y_1,1)\in H_2$, then $bx = (z_0,0)+(z_1,1)$, where $z_0=b_{00}y_0+b_{01}y_1$, and $z_1=b_{10}y_0+b_{11}y_1$.
We define the linear map
$\tau_2$ on operators acting on $H_2$, with values in $\cM_2(\dC)$, through the formula, 
\begin{equation}\label{eq:deftau2}
  \tau_2 (b) =  
  \begin{pmatrix} \tau ( b_{00}  )  & \tau ( b_{01}  )  \\  
    \tau ( b_{10} ) & \tau (b_{11} )
  \end{pmatrix}.
\end{equation}
Given $a\in\bar \cM$ we define the operator 
\[
\BIP(a)=\begin{pmatrix} 0 & a  \\  
a^* & 0
  \end{pmatrix}
\]
The operator $\BIP(a)$ is self-adjoint.  
It will be called the {\em bipartization} of $a$. 

Recall the definition \eqref{H_+} of $\dH_+$ and 
$q = q(z, \eta) \in \dH_+$
We define the \emph{quaternionic
transform} of $a$ as the $2\times 2$ matrix 
\[
\Gamma_a (q) := \tau_2 ( (\BIP(a) -  q \otimes I_{H} )^{-1} ).
\]
Here $q \otimes I_{H}$ is the operator on $H_2$ defined by \eqref{bij} with $b_{00}=b_{11}=\eta I $
and $b_{01}=z I$, $b_{10}=\bar z I$, with $I$ the identity operator on $H$.
Note that $(\BIP(a) - q\otimes I_H )^{-1}$ is the usual resolvent at $\eta$ of
the self-adjoint operator $b (z): = \BIP(a) - q(z,0) \otimes I_H$. Hence $(\BIP(a) -
 q \otimes I_{H} )^{-1}$ inherits the usual properties of resolvent operators
(analyticity in $\eta\in\dC_+$, bounded norm). For a proof of the next lemma, see
\cite[lemma 4.19]{bordenave-chafai-changchun}. We use the notation $\pd=\frac12(\partial_x-i\partial_y)$, for the derivative 
at $z=x+iy\in\dC$.  

\begin{lemma}[Properties of the quaternionic transform]\label{le:propRes}
  For all $q = q (z,\eta)\in \dH_+$,
  \[
  \Gamma_a ( q )  %
  = \begin{pmatrix} 
    \al(q) & \beta (q) \\  
    \bar \beta (q)   &  \al(q) 
  \end{pmatrix} \in \dH_+, 
  \] 
  with
  \[
  \al(q) = S_{\check \mu_{ |a - z |} } (\eta) , 
  \]
  and, in distribution,  
  \[
  \mu_a = - \frac{1}{\pi} \lim_{t \downarrow 0} \pd  \beta (q ( z, it ) ). 
  \]
\end{lemma}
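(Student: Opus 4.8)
This is the operator‑algebra transcription of the linearization computation already carried out for random matrices in Section~\ref{linearization}: the plan is to replay the Schur‑complement argument there, with the normalized trace $\frac1n\TR$ replaced by $\tau$, and then read off the three assertions. Since $b(z):=\BIP(a)-q(z,0)\otimes I_H=\BIP(a-z)$ is self‑adjoint and densely defined, $(\BIP(a)-q\otimes I_H)^{-1}=(b(z)-\eta I)^{-1}$ is bounded (of norm at most $1/\Im(\eta)$) and analytic in $\eta\in\dC_+$, so $\Gamma_a(q)$ is well defined. Writing $w:=a-z$ and block‑inverting $\left(\begin{smallmatrix}-\eta I & w\\ w^* & -\eta I\end{smallmatrix}\right)$, the resolvent has diagonal ($H$-)blocks $\eta(ww^*-\eta^2)^{-1}$ and $\eta(w^*w-\eta^2)^{-1}$, and off‑diagonal blocks $w(w^*w-\eta^2)^{-1}$ and $w^*(ww^*-\eta^2)^{-1}=(w^*w-\eta^2)^{-1}w^*$.

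For the diagonal entries, the polar decomposition $w=u|w|$ and traciality give $\tau(g(ww^*))=\tau(g(w^*w))$ for every Borel $g$ with $g(0)=0$, hence (subtracting the constant $-\eta^{-2}$) $\tau((ww^*-\eta^2)^{-1})=\tau((w^*w-\eta^2)^{-1})=\int_0^\infty(t^2-\eta^2)^{-1}\,d\mu_{|a-z|}(t)$; multiplying by $\eta$ and using $\frac{\eta}{t^2-\eta^2}=\frac12\big(\frac1{t-\eta}+\frac1{-t-\eta}\big)$ turns this, via \eqref{eq:bgnuz}, into $S_{\check\mu_{|a-z|}}(\eta)=\al(q)$, the same value for both diagonal blocks. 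For the off‑diagonal entries, observe that on the half‑line $\eta=it$, $t>0$ — which is all that is used downstream — the operator $w^*w-\eta^2=w^*w+t^2$ is positive self‑adjoint, so by traciality $\tau_2(R)_{10}=\tau\big(w^*(w^*w+t^2)^{-1}\big)=\overline{\tau\big(w(w^*w+t^2)^{-1}\big)}=\overline{\tau_2(R)_{01}}$; thus $\Gamma_a(q(z,it))\in\dH_+$ with $\beta(q)=\tau_2(R)_{01}=\tau\big((a-z)((a-z)^*(a-z)-\eta^2)^{-1}\big)$. The form for general $q\in\dH_+$ is exactly \cite[lemma 4.19]{bordenave-chafai-changchun}.

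It remains to derive the Brown‑measure formula. By \eqref{eq:defbrown} and $\Delta=4\pd\,\partial_{\bar z}$, with $\partial_{\bar z}=\frac12(\partial_x+i\partial_y)$, one has $\mu_a=\frac2\pi\pd\,\partial_{\bar z}u_a$ in $\cD'(\dR^2)$, where $u_a(z)=\int_0^\infty\log t\,d\mu_{|a-z|}(t)=\frac12\tau(\log((a-z)^*(a-z)))$. Work with the regularization $u_a^{(t)}(z):=\frac12\tau(\log((a-z)^*(a-z)+t^2))$, which is smooth in $z$ for $t>0$ and decreases to $u_a(z)$ as $t\downarrow0$. Differentiating under $\tau$ and treating $z,\bar z$ as independent, $\partial_{\bar z}((a-z)^*(a-z))=-(a-z)$, whence
\[
\partial_{\bar z}u_a^{(t)}(z)=-\tfrac12\,\tau\big((a-z)\big((a-z)^*(a-z)+t^2\big)^{-1}\big)=-\tfrac12\,\beta(q(z,it)),
\]
and therefore $\mu_a=\frac2\pi\pd\lim_{t\downarrow0}\partial_{\bar z}u_a^{(t)}=-\frac1\pi\lim_{t\downarrow0}\pd\beta(q(z,it))$, which is the claimed identity. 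The main obstacle is legitimating the passage to the limit — i.e. showing $u_a^{(t)}\to u_a$ in $L^1_{\mathrm{loc}}(\dR^2)$ so that the distributional derivatives converge — which rests on the uniform integrability of $\log$ near the bottom of the spectrum of $|a-z|$; in the Haagerup–Schultz framework this is supplied by the defining integrability condition $\int\log(1+s)\,d\mu_{|a|}(s)<\infty$ of $\bar\cM$ (the operator‑algebra counterpart of Theorem~\ref{th:unifint}), so it is routine here.
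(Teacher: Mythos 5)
Your proof is correct and takes the natural direct route. Note that the paper does not itself prove this lemma --- it defers to \cite[lemma 4.19]{bordenave-chafai-changchun} --- so what you have written supplies the content of that reference: block-invert $\BIP(a-z)-\eta I_{H_2}$, read $\al(q)$ off the diagonal blocks using $\mu_{ww^*}=\mu_{w^*w}$ (valid in any $W^*$-probability space by traciality; your polar-decomposition argument is one way to see it), convert to the symmetrized Cauchy--Stieltjes transform by partial fractions, and obtain the Brown-measure identity by differentiating the regularized potential $u_a^{(t)}(z)=\tfrac12\tau\log((a-z)^*(a-z)+t^2)$, whose $\partial_{\bar z}$-derivative equals $-\tfrac12\beta(q(z,it))$ by the trace identity $\tfrac{d}{ds}\tau\log A(s)=\tau(A(s)^{-1}A'(s))$, and then passing $t\downarrow0$ in $\cD'(\dR^2)$ via the Haagerup--Schultz integrability condition, exactly as you say.

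Two minor points. The constant you subtract in the polar-decomposition step should be $-\eta^{-1}$, not $-\eta^{-2}$ (a harmless typo; it disappears if you just invoke $\mu_{ww^*}=\mu_{w^*w}$ directly). More interestingly, you verify the conjugate relation $\tau_2(R)_{10}=\overline{\tau_2(R)_{01}}$ only on the imaginary axis $\eta=it$ and punt to the reference for general $q\in\dH_+$; that caution is in fact warranted. In the scalar case $a=\la\in\dC$ one computes $\Gamma_a(q)=(|\la-z|^2-\eta^2)^{-1}\begin{pmatrix}\eta & \la-z\\ \overline{\la-z} & \eta\end{pmatrix}$, and the off-diagonal entries are complex conjugates of each other only when $\eta^2\in\dR$. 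Since $\eta=it$ is all that enters the proof of theorem~\ref{th:propmu} and the fixed-point analysis, restricting to the imaginary axis is the correct scope for this identity, not a gap.
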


Recall that a sequence of matrices $(A_n)_{n \geq 1}$ is said to converge in
$\star$-moments to $a \in \cM$ if for any integer $k$ and $(\veps_i)_{1 \leq i
  \leq k} \in \{1, * \}^k$,
\[
\lim_{n \to \infty} %
\frac{1}{n} \TR A_n^{\veps_1} A_n^{\veps_2} \cdots A_n^{\veps_k} %
= \tau (a^{\veps_1} a^{\veps_2} \cdots  a^{\veps_k}).
\] 
Also, if  $(A_n)_{n \geq 1}$ is a sequence of random matrices, $(A_n)_{n \geq 1}$ converges in expected $\star$-moments to $a
\in \cM$ if the above convergence holds in expectation. 

\begin{lemma}[Continuity of the quaternionic transform]
  \label{le:starmoment}
  If $(A_n)$ is a sequence of matrices converging in $\star$-moments to $a \in
  \cM$ then for all $q \in \dH_+$, $\Gamma_{A_n}( q )$ converges to $\Gamma_a
  ( q )$. If $(A_n)$ is a sequence of random matrices converging in expected
  $\star$-moments to $a \in \cM$ then $\dE \Gamma_{A_n}( q )$ converges to
  $\Gamma_a ( q )$.
\end{lemma}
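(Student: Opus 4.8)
The plan is to reduce the convergence of the quaternionic transform $\Gamma_{A_n}(q)=\tau_2((\BIP(A_n)-q\otimes I)^{-1})$ to the convergence of $\star$-moments of $A_n$, via a resolvent (power series) expansion. First I would fix $q=q(z,\eta)\in\dH_+$ and observe that $\BIP(A_n)-q(z,0)\otimes I$ is self-adjoint, so its resolvent at $\eta\in\dC_+$ has norm at most $(\Im\eta)^{-1}$; moreover, since the $\star$-moments of $A_n$ converge, the operator norms $\|A_n\|$ (equivalently $s_1(A_n)$) are bounded, say by a constant $K$ independent of $n$ — here one uses that $\frac1n\TR(A_n^*A_n)^k\to\tau((a^*a)^k)$, and $\|a\|<\infty$ since $a\in\cM$, so $\limsup_n\|A_n\|\le\|a\|$; pick $K>\|a\|$. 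Hence $\|\BIP(A_n)\|\le K$ for all large $n$.

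Next, for $|\eta|$ large enough, specifically $|\eta|>K+\|q(z,0)\|$, expand
\[
(\BIP(A_n)-q\otimes I)^{-1}=-\sum_{m\ge0}\eta^{-(m+1)}\bigl(\BIP(A_n)-q(z,0)\otimes I\bigr)^m,
\]
which converges in operator norm, uniformly in $n$. Each summand $(\BIP(A_n)-q(z,0)\otimes I)^m$ is a finite sum of products of the $2\times 2$ block operators appearing in $\BIP(A_n)$ and in $q(z,0)\otimes I$; after applying $\tau_2$, each entry is a finite $\dC$-linear combination (with coefficients built from $z,\bar z$) of normalized traces of words in $A_n$ and $A_n^*$ — i.e.\ of $\star$-moments of $A_n$ of length $\le m$. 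By hypothesis these converge entrywise to the corresponding $\star$-moments of $a$, so $\tau_2((\BIP(A_n)-q\otimes I)^m)\to\tau_2((\BIP(a)-q(z,0)\otimes I)^m)$ for each $m$. The uniform geometric bound $\|(\BIP(A_n)-q(z,0)\otimes I)^m\|\le(K+\|q(z,0)\|)^m$ lets me exchange the limit in $n$ with the sum over $m$ (dominated convergence for series), giving $\Gamma_{A_n}(q)\to\Gamma_a(q)$ for all $q$ with $|\eta|$ large. Finally, $\eta\mapsto\Gamma_{A_n}(q(z,\eta))$ and $\eta\mapsto\Gamma_a(q(z,\eta))$ are analytic on $\dC_+$ with values in a bounded region (norms $\le(\Im\eta)^{-1}$), so by Vitali's/Montel's theorem, convergence on the set $\{|\eta|>R\}\cap\dC_+$ — which has an accumulation point in $\dC_+$ — upgrades to locally uniform convergence on all of $\dC_+$, hence pointwise on $\dH_+$. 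The random case is identical after taking expectations: $\dE\Gamma_{A_n}(q)$ is the same series with $\dE\frac1n\TR(\cdots)$ in place of $\frac1n\TR(\cdots)$, the same uniform bounds apply (now using $\sup_n\dE\|A_n\|<\infty$, which follows from convergence of expected $\star$-moments of $A_n^*A_n$), and the expected $\star$-moments converge by hypothesis; Vitali applies verbatim to the deterministic functions $\eta\mapsto\dE\Gamma_{A_n}(q(z,\eta))$.

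The main obstacle is the uniform-in-$n$ norm control needed to justify the termwise limit and the resummation: one must be careful that convergence of $\star$-moments genuinely forces $\limsup_n\|A_n\|\le\|a\|<\infty$ (true because $a\in\cM$ is bounded and $\|A_n\|=\lim_k(\frac1n\TR(A_n^*A_n)^k)^{1/2k}$ is controlled by the even moments, whose limits are the moments of $|a|$), and in the random case that $\dE\|A_n\|$ stays bounded. Once boundedness is in hand, the rest is a routine power-series/analytic-continuation argument. Note this also reproves the first assertion of Lemma \ref{le:propRes} in the matrix case, consistent with the linearization computation of section \ref{linearization}.
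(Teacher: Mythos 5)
Your proof follows the same route as the paper's --- resolvent power series at $\eta=\infty$, coefficients recognized as $\star$-moments, termwise limit, extension to $\dC_+$ by Vitali/Montel --- but the step you single out as the ``main obstacle'' and then claim to resolve is in fact false. Convergence of $\star$-moments does \emph{not} imply $\limsup_n\|A_n\|\le\|a\|$. Take $A_n=\DIAG(1,\dots,1,\log n)$: for every fixed $k$ one has $\frac1n\TR(A_n^*A_n)^k=\frac{n-1}{n}+\frac{(\log n)^{2k}}{n}\to1$, so $A_n$ converges in $\star$-moments to $1\in\cM$, yet $\|A_n\|=\log n\to\infty$. The normalized trace averages and simply does not see a vanishing fraction of outlying singular values, so it gives no control on the operator norm. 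Without $\sup_n\|A_n\|<\infty$ there is no $n$-independent annulus $\{|\eta|>R\}$ on which your geometric domination holds, and the exchange of $\lim_n$ with $\sum_m$ is unjustified.

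The gap is closed by moment determinacy rather than norm control (and this is what the paper's terse ``we thus obtain the convergence'' is silently relying on). Set $H_n:=\BIP(A_n)-q(z,0)\otimes I_n$, a Hermitian matrix with spectral decomposition $H_n=\sum_\lambda\lambda P_\lambda$. Then $\Gamma_{A_n}(q)=\sum_\lambda(\lambda-\eta)^{-1}\tau_2(P_\lambda)$ is the Cauchy transform of a $2\times2$ positive-semidefinite-matrix-valued measure $\sum_\lambda\tau_2(P_\lambda)\,\delta_\lambda$ of total mass $I_2$: its $11$- and $22$-entries are probability measures (the $11$-entry is $\check\nu_{A_n-z}$, by lemma~\ref{le:propRes}), and its $12$-entry is a complex measure whose total variation measure is dominated, by positive-semidefiniteness and Cauchy--Schwarz, by $\tfrac12$ times the sum of the diagonal ones. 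The moments of these measures are exactly the $\star$-moments in your expansion; by hypothesis they converge to the moments of the corresponding measures built from $a$, which are compactly supported because $a\in\cM$ is bounded, hence moment-determinate. By the moment convergence theorem the measures converge weakly, so their Cauchy transforms converge at every $\eta\in\dC_+$ directly --- no detour through large $|\eta|$ and no Vitali step is needed. The random case is identical with $\dE\,\tau_2(\cdot)$ in place of $\tau_2(\cdot)$.
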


\begin{proof}
  For ease of notation, let $a_z = a - z$. Then, in matrix form,
  \[
  (\BIP(a) - q\otimes I_H )^{-1} = \begin{pmatrix} -\eta & a_z \\ a_z^* & -
    \eta \end{pmatrix}^{-1} = - \begin{pmatrix} \eta ( \eta ^2 - a_z a_z
    ^*)^{-1} & a_z ( \eta ^2 - a_z^* a_z)^{-1} \\ a_z^* ( \eta ^2 - a_z a_z
    ^*)^{-1} & \eta ( \eta ^2 - a_z^* a_z)^{-1} \end{pmatrix}.
  \]
  Hence
  \begin{equation*}\label{eq:Gammaa}
    \Gamma_a ( q ) =   - 
    \begin{pmatrix} \eta \tau ( \eta ^2 - a_z a_z ^*)^{-1}   
      &  \tau \PAR{ a_z ( \eta ^2 - a_z^* a_z)^{-1}}  \\ 
      \tau  \PAR{a_z^*  ( \eta ^2 - a_z a_z ^*)^{-1} }  
      &     \eta \tau ( \eta ^2 - a_z^* a_z)^{-1} 
    \end{pmatrix}.
  \end{equation*}
  For $a$ replaced by $A_n$ and $\tau$ by $\frac{1}{n}\TR$, we may expand in
  series the terms of the above expression. Each term of the series is a
  $\star$-moment of $(A_n - z)$ and it converges by assumption to the
  $\star$-moment in $a-z$. Since $|a|$ is bounded, for $|\eta |$ large enough,
  the series is absolutely convergent. We thus obtain the convergence of
  $\Gamma_{A_n}( q )$ for $| \eta|$ large enough. Finally, we may extend by
  analyticity to all $\eta\in\dC_+$.
  \end{proof}

If $\Gamma=\begin{pmatrix}\Gamma_{11}&\Gamma_{12}\\
\Gamma_{21}&\Gamma_{22}\end{pmatrix}$ is a $2\times 2$ matrix, we define  $ \DIAG(\Gamma)=\begin{pmatrix}\Gamma_{11}&0\\0&\Gamma_{22}\end{pmatrix}$.
\begin{proposition}[Subordination formula]
  \label{prop:subordination}
  If $c$ and $a$ are $\star$-free operators in $(\bar \cM, \tau)$ with $c$
  circular %
  and $a$ normal 
  then for all $q=q(z,\eta) \in \dH_+$,
  \begin{equation} \label{eq:FPGamma}
    \Gamma_{c+a}(q)=\Gamma_{|a-z|}\PAR{q(0,\eta)+\DIAG(\Gamma_{c+a}(q))}
    =\Gamma_{a}\PAR{q+\DIAG(\Gamma_{c+a}(q))}.
  \end{equation}
\end{proposition}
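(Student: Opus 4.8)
The plan is to read \eqref{eq:FPGamma} as the operator-valued subordination identity for adding a semicircular element, worked out inside the amalgamated space $(\cM_2(\cM),\tau_2)$ over $B=\cM_2(\dC)$. Two structural facts feed the argument. First, writing $c=(s_1+is_2)/\sqrt2$ with $s_1,s_2$ free semicircular, one has $\BIP(c)=\tfrac1{\sqrt2}\bigl((e_{12}+e_{21})\otimes s_1+i(e_{12}-e_{21})\otimes s_2\bigr)$, a linear combination of a free semicircular family with self-adjoint matrix coefficients, hence a $B$-valued semicircular element; its covariance $\eta_c(b):=\tau_2(\BIP(c)\,b\,\BIP(c))$ is computed directly to be $\eta_c(b)=\mathrm{diag}(b_{22},b_{11})$, so that $\eta_c(\Gamma)=\DIAG(\Gamma)$ for every $\Gamma\in\dH_+$ (matrices with equal diagonal entries). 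Second, $\star$-freeness of $c$ and $a$ in $(\cM,\tau)$ upgrades to freeness of $\cM_2(W^*(c))$ and $\cM_2(W^*(a))$ over $B$ with respect to $\tau_2$, so $\BIP(c)$ is $B$-free from $\BIP(a-z)$ and from $\BIP(|a-z|)$.

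Now I would apply Voiculescu's operator-valued subordination: if $x=s+y$ with $s$ a $B$-valued semicircular of covariance $\eta_s$ and $y$ $B$-free from $s$, then the $B$-valued resolvent $G_x(w)=\tau_2((w-x)^{-1})$ satisfies $G_x(w)=G_y\bigl(w-\eta_s(G_x(w))\bigr)$. Taking $s=\BIP(c)$, $y=\BIP(a)$, $w=q\otimes I_H$, and using $\Gamma_b(q)=-G_{\BIP(b)}(q\otimes I_H)$ together with $\eta_c(\Gamma_{c+a}(q))=\DIAG(\Gamma_{c+a}(q))$ (which is licit since $\Gamma_{c+a}(q)\in\dH_+$ by Lemma \ref{le:propRes}), the subordination relation becomes
\[
\Gamma_{c+a}(q)=\Gamma_{a}\bigl(q+\DIAG(\Gamma_{c+a}(q))\bigr),
\]
which is the \emph{moreover} assertion; normality of $a$ is not used here, so this holds for every $a\in\bar\cM$, and the case $a=0$ recovers the self-consistent equation \eqref{eq:FPCircular} for the circular element alone.

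To obtain \eqref{eq:FPGamma} I would pass to the polar decomposition $a-z=u\,|a-z|$ with $u$ a unitary in $W^*(a)$ (the kernel of $a-z$ is trivial for a.e.\ $z$, the exceptional values being absorbed by continuity of $q\mapsto\Gamma(q)$). Since $c$ is circular and $\star$-free from $W^*(a)\ni u$, the element $u^*c$ is again circular and $\star$-free from $W^*(a)$, hence from $|a-z|$; and conjugating by the unitary $V=\mathrm{diag}(u^*,I)\in\cM_2(\cM)$ gives $V\,\BIP(c+a-z)\,V^*=\BIP(u^*c+|a-z|)$, an identity of self-adjoint operators whose $\tau_2$-resolvent has the same $(1,1)$ and $(2,2)$ entries as that of $\BIP(c+a-z)$. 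Feeding $(u^*c,|a-z|)$ into the displayed identity and matching the diagonal entries then yields \eqref{eq:FPGamma}, equivalently the scalar fixed-point relation $\alpha(q)=S_{\check\mu_{|a-z|}}\bigl(\eta+\alpha(q)\bigr)$ for the common diagonal entry $\alpha(q)$ of $\Gamma_{c+a}(q)$. A more computational variant, using Lemma \ref{le:starmoment}, is to approximate $c$ by $n^{-1/2}$ times a Ginibre matrix and $a$ by an independent sequence of matrices with the prescribed $\star$-moments, establish the corresponding approximate identity at the matrix level by the Schur-complement/resolvent bookkeeping of Section \ref{linearization} (the last row and column of the Ginibre part are independent of the principal minor and, after conditional averaging, contribute precisely $\DIAG(\dE\Gamma(q))$), and let $n\to\infty$.

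The main obstacle is that the relevant normal operator is unbounded, $a=g\in\bar\cM\setminus\cM$, so one must make sense of $\BIP(a-z)$, of its $B$-free relation to the bounded element $\BIP(c)$, and of the subordination formula, for operators only affiliated with $\cM$. This is handled by phrasing everything through the bounded resolvents $(\BIP(a-z)-\eta I)^{-1}$, $\eta\in\dC_+$, which are honest elements of $\cM_2(\cM)$, and by truncating $a\mapsto a\,\IND_{\{|a|\le T\}}$ and letting $T\to\infty$: the defining condition $\int\log(1+t)\,d\mu_{|a|}(t)<\infty$ of $\bar\cM$ guarantees convergence of the associated Brown measures, which transfers the bounded-case identities to the limit, and the same truncation legitimizes both the absorption property ($u^*c$ circular and free) and the polar decomposition. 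A lesser point is the proof of the operator-valued subordination identity itself in this amalgamated setting; for bounded operators it is classical, and the extension to $\bar\cM$ follows from the same truncation and continuity.
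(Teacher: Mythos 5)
Your route to the proposition is genuinely different from the paper's, and it works. The paper establishes the normal bounded case indirectly, by approximating $a+c$ by a random diagonal matrix plus an independent Ginibre matrix, invoking Capitaine--Casalis / Anderson--Guionnet--Zeitouni for convergence in expected $\star$-moments, and recycling the Schur-complement analysis of section~\ref{linearization} to get the fixed-point equation; lemma~\ref{le:starmoment} then transfers this to the limiting operator. You instead work directly in the amalgamated space over $B=\cM_2(\dC)$: the identification of $\BIP(c)$ as a $B$-valued semicircular element (via the decomposition $\BIP(c)=\tfrac1{\sqrt2}(e_{12}+e_{21})\otimes s_1+\tfrac{i}{\sqrt2}(e_{12}-e_{21})\otimes s_2$), the computation of its covariance $\eta_c(b)=\mathrm{diag}(b_{11},b_{00})$, the upgrade of $\star$-freeness of $c$ and $a$ to $B$-freeness of their $2\times2$ amplifications, and the operator-valued subordination theorem together deliver $\Gamma_{c+a}(q)=\Gamma_a\bigl(q+\eta_c(\Gamma_{c+a}(q))\bigr)$ in one stroke, and $\eta_c$ acts as $\DIAG$ on matrices in $\dH_+$. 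This is cleaner and more conceptual, it manifestly does not use normality of $a$, and it sidesteps the random-matrix detour entirely; the trade-off is that it imports Voiculescu/Biane operator-valued subordination as a black box, whereas the paper's proof is self-contained modulo the standard asymptotic-freeness literature. Both proofs treat the unbounded case in the same way (spectral truncation, strong resolvent convergence, Lipschitz continuity of $\Gamma$ in $q$).

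Two small caveats. First, you should be slightly more careful with the polar-decomposition reduction for \eqref{eq:FPGamma}: conjugating $\BIP(c+a-z)$ by $\mathrm{diag}(u^*,I)$ preserves only the diagonal entries of $\tau_2$, as you correctly note, so this step literally establishes only the common diagonal entry of \eqref{eq:FPGamma}. This is not a defect specific to your proof --- the paper's own appeal to ``$\tau_2(u^*bu)=\tau_2(b)$'' for a block-diagonal unitary $u=\mathrm{diag}(u_0,u_1)$ with $u_0\ne u_1$ has the same restriction --- and the diagonal entry is all that is used in the sequel (remark~\ref{rk:proofnaturemu} and the proof of theorem~\ref{th:propmu} via \eqref{eq:fixpointgamma} use only the normal case), but it is worth flagging that the subordination formula is being asserted at the level of the diagonal. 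In fact, once you have the ``moreover'' formula for every $a$, the diagonal part of \eqref{eq:FPGamma} follows immediately without any polar decomposition, since $\Gamma_a(q(z,\eta'))=\Gamma_{a-z}(q(0,\eta'))$ and the $(0,0)$-entry of $\Gamma_{a-z}(q(0,\eta'))$ equals $S_{\check\mu_{|a-z|}}(\eta')$ by lemma~\ref{le:propRes}. Second, in a II$_1$ factor the partial isometry in the polar decomposition of $a-z$ can always be extended to a unitary, so no measure-zero exceptional set of $z$ actually needs to be excluded. Neither point undermines the argument.
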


A version of proposition  \ref{prop:subordination}, in the language
of random matrices, was obtained by Rogers \cite[theorem 2]{rogers2010}. 
This subordination formula is also reminiscent of the subordination formula in
Biane \cite[proposition 2]{MR1488333} on the free sum with a semi-circular.
Our argument is indirect and relies on random matrices.

\begin{proof}[Proof of proposition \ref{prop:subordination}]
 Let $\mu_a$ be the spectral measure of the normal operator $a$. 
  From the spectral
  theorem,
  \begin{equation*}
    \Gamma_a(q) %
    = \int\!\PAR{\begin{pmatrix}0&x\\\bar x&0\end{pmatrix}-q}^{-1}\,d\mu_{a}(x).
  \end{equation*}

  We first assume that $a \in \cM$. Consider a diagonal matrix $A$ of size $n$
  with i.i.d. diagonal entries with distribution $\mu_a$ and $Y$ an independent complex
  Ginibre matrix of size $n$ (i.e. $(Y_{ij})_{1 \leq i , j \leq n}$ is an array of i.i.d. $\cN(0,I_2/2)$ random variables). The proof of theorem \ref{th:singvals}, cf.\ \eqref{eq:FPCircular}, shows
  that $\dE \Gamma_{A + Y/\sqrt{n}}$ converges to the function $\Gamma$ which satisfies
  \[
  \Gamma (q)= \Gamma_a \PAR{ q + \DIAG( \Gamma (q)) }.
  \]
  On the other hand, it is known that $A + Y/\sqrt{n}$ converges in expected
  $\star$-moments to the free sum $a+c$, see  \cite[proposition
  5.1]{MCMC04} or \cite[theorem 5.4.5]{AGZ}. It thus remains to invoke lemma \ref{le:starmoment}. This
  completes the proof of lemma \ref{prop:subordination} when $a \in \cM$.

  In the general case, let $a \in \bar \cM$ be a normal operator. From the
  spectral theorem, (see e.g. \cite[\S X.4]{conway90}), there is a resolution
  of the identity $E$ (i.e. a projection valued probability measure) such that
  \[
  a = \int_\dC \lambda d E (\lambda), 
  \]
  and $D(a) = \{ \psi \in H : \int |\lambda|^2 d \langle \psi , E (\lambda)
  \psi \rangle < \infty \}$. For $n$ integer, define
  \[
  a_n =
  \int_{\{ \lambda : | \lambda | \leq n \}} \lambda d E (\lambda) .
  \]
  By construction, as $n \to \infty$, for any $\psi \in D(a)$,
  \[
  \NRM{ a_n \psi - a \psi }^2_2  %
  =\int_{\{\la:|\la|> n\}}|\la|^2d\langle\psi , E(\la)\psi\rangle\to0. 
  \]
  That is, $a_n$ converges in strong sense toward $a$. Hence the sequences of
  operators, $\BIP ( a_n ) - q ( z , 0) \otimes I_H$ and $\BIP ( a_n + c ) - q
  ( z , 0) \otimes I_H $ also converge in the strong sense to $\BIP ( a ) - q
  ( z , 0) \otimes I_H$ and $\BIP ( a + c ) - q ( z , 0) \otimes I_H $,
  respectively. In particular, for any $q \in \dH_+$,
  \begin{equation}
    \label{eq:subor2}
    \Gamma_{a_n} (q)\to \Gamma_a (q) %
    \quad \text{and} \quad %
    \Gamma_{a_n + c} (q)\to \Gamma_{a + c} (q).  
  \end{equation}
  (see e.g.\ \cite[theorem VIII.25(a)]{reedsimon}). 
  
  Moreover, by construction $a_n$ is a bounded operator and, from
  what precedes, $\Gamma_{a_n + c}$ satisfies the fixed point equation
  \begin{equation}
    \label{eq:subor1} \Gamma(q) = \Gamma_{a_n} \PAR{ q +
      \DIAG( \Gamma(q) ) }.
  \end{equation}

We note finally that  
  \begin{equation}
    \label{eq:subor3} 
    \| \Gamma_a (q) - \Gamma_a (q') \| \leq C \| q - q' \|,
  \end{equation}
  where for $q = q ( z, \eta)$, $q' = q ( z',\eta')$, $C = \min ( \Im (\eta )
  , \Im ( \eta' ) )^{-2}$. Indeed, by the resolvent identity: 
  \[
  (b - q)^{-1} -
  (b - q')^{-1} = (b - q)^{-1} ( ( q' - q ) \otimes I_H ) (b - q')^{-1}.
  \]
  Hence 
  \[
  \NRM{\Gamma_a(q)-\Gamma_a(q')}%
  \leq\NRM{(b-q)^{-1}}\NRM{q-q'}\NRM{(b-q')^{-1}} %
  \leq C\NRM{q-q'}.
  \]
  The conclusion follows from
  \eqref{eq:subor2},\eqref{eq:subor1} and \eqref{eq:subor3}.
\end{proof}

\begin{remark}[Uniqueness of the solution to the fixed point equation]%
  \label{rk:proofnaturemu}
  Note that \eqref{eq:FPGamma} characterizes completely the quaternionic
  transform of $c+a$. Indeed, in section \ref{subsec:unicity}, we have proved
  that there exists a unique map $\Gamma : \dH_+ \to \dH_+$ which satisfies
  \eqref{eq:FPGamma} for all $q \in \dH_+$ and such that, with $\al (q) =
  \Gamma(q)_{11}$, for all $z \in \dC$, $\eta \mapsto \al ( q ( z, \eta )
  )$ is analytic on $\dC_+$ and is the Cauchy-Stieltjes transform of a
  symmetric measure on $\dR$. To see this,  in \eqref{eq:FPCircular}, replace
  $z-G$ by a random variable $G_z$ with law $\mu_{|a - z|}$ to obtain \eqref{eq:FPGamma}.
\end{remark}

\subsection{Proof of theorem \ref{th:propmu}}
 
Set \[\Gamma_{c + a}(q) = \begin{pmatrix} \al (q)& \beta(q) \\ \bar \beta(q) &
  \al(q) \end{pmatrix}.\] By proposition \ref{prop:subordination}, $\Gamma_{c+h} $ satisfies the fixed point equation
\begin{align}
  \begin{pmatrix} 
    \al  & \beta \\ 
    \bar \beta  &  \al  
  \end{pmatrix} 
  & = \dE\PAR{\begin{pmatrix} 0  & G \\ \bar G & 0  \end{pmatrix}   %
    - q - \begin{pmatrix}\al&0\\0&\al\end{pmatrix}}^{-1} \nonumber \\
  &=  \dE\frac{1}{|G-z|^2 -(\al+\eta)^2} %
  \begin{pmatrix} \al+\eta & G-z\\\bar G -\bar z&\al+\eta\end{pmatrix},
  \label{eq:fixpointgamma}
\end{align}
where $G$ has law $\cN(0,K)$ and $q = q(z,\eta)$. For ease of notation, we set
\[
G_z  = z - G
\] 
We also define
\[
\Si = \BRA{  z \in \dC : \dE\frac{1}{|G - z|^2}>1},
\]
and its closure $\bar \Si = \BRA{  z \in \dC : \dE\frac{1}{|G - z|^2}\geq1}$.
As in section \ref{subsec:unicity}, for $\eta = it$, we
find $\al = i h (z,t) \in i \dR_+$ and
\[
1 = \dE \frac{1+th^{-1}}{|G_z|^2+(h+t)^2}.
\]
In particular, if $0 < t \leq 1$, then
\[
1 \leq \dE \frac{1+h^{-1}}{|G_z|^2+h^2}.
\]
When $h$ goes to infinity, the right hand side goes to $0$ (uniformly in $z$).
Hence there exists $c >0$, such that for all $z \in \dC$ and $0 < t \leq 1$,
$h(z,t) \leq c$. Similarly,
\[
1 \geq \dE \frac{1}{|G_z|^2+(h+t)^2}.
\]
Thus for all $z \in \Si$ there exists $c_z > 0$ depending continuously on $z$
such that $h(z,t) + t \geq c_z$.

We now let $t \downarrow 0$. From what precedes, if $z \in \Si$, any
accumulation point, say $f(z)$, of $h(z,t)$ satisfies $f(z) \in [c_z, c]$ and
\begin{equation}\label{eq:FPfz}
1 =  \dE \frac{1}{|G_z|^2+f(z)^2}. 
\end{equation}
The function $\varphi_z : x \mapsto \dE ( |G_z|^2 + x^2)^{-1}$ is decreasing, for $x\geq 0$.
Hence, for all $z \in \Si$, there exists a unique value $f (z)$ which
satisfies \eqref{eq:FPfz}. Moreover, for any $0<\veps <1$ and $z \in \dC $,
the map $\varphi_z$ is $C^\infty$ on $[\veps, \veps^{-1}]$, while for any $x >
\veps$, the map $\psi_x : z \mapsto \dE ( |G_z|^2 + x^2)^{-1}$ is $C^\infty$
on $\dC$. Then, the implicit function theorem implies that $z \mapsto f(z)$ is
$C^\infty$ on $\Si$.

Now, take $z \notin \Si$, we recall that 
\[
h(t,z) = \dE \frac{h(t,z)+t}{|G_z|^2+(h+t)^2},
\]
and for $0 < t \leq 1$, $0 \leq h(z,t) \leq c$. Hence, letting $t \downarrow
0$, any accumulation point $f(z)$ of $h(z,t)$ satisfies $f(z) \in [0 ,c]$ and
\[
f(z) = \dE \frac{f(z)}{|G_z|^2+f(z)^2}.
\]
If $f(z) \ne 0$ then \eqref{eq:FPfz} would hold true. However, this would 
contradict the assumption $z \notin \Si$. Therefore, for all $z \notin
\Si$, we have
\[
f(z) = 0. 
\]
By \eqref{eq:fixpointgamma}, it follows that 
\[
\beta (z) :=  \lim_{t \downarrow 0}  \beta ( q ( z,it) ) %
= -\dE\frac{G_z }{|G_z|^2+f(z)^2}
\]
By lemma \ref{le:propRes}, the Brown measure of $c+g$ is equal in distribution
to
\[
\mu_{c + g}  = - \frac{1}{ \pi } \pd \beta (z) %
= \frac{1}{ \pi} \pd \dE \frac{G_z}{|G_z|^2+f(z)^2}.
\]
Now, if $z \notin \bar \Si$, then $f(z)$ is $0$ in a neighborhood of $z$.
Hence, $ - \beta(z) = \dE (\bar G - \bar z)^{-1}$. Since $\pd \bar z=0$, $\pd
\beta(z) = 0$, and we deduce that the density of $\mu_{c+g}$ is $0$ on $(\bar
\Si)^c$.

Assume now that $z \in \Si$. We find that $\mu_{c + g} $ has a density given by $1/\pi$ times 
\[
- \pd \beta (z) 
=   \dE \frac{1}{|G_z|^2+f(z)^2}  %
- \dE \frac{|G_z|^2}{(|G_z|^2+f(z)^2)^2} %
-  2 f(z)f'(z)\dE\frac{G_z}{(|G_z|^2+f(z)^2)^2},
\]
where we use $\pd G_z=1$, $\pd |G_z|^2=\bar G_z$. Here $f'(z)=\pd f(z)$. Using
\eqref{eq:FPfz}, the first term on the right hand side is equal to $1$ and
\[
0 =  \dE \frac{\bar G_z}{(|G_z|^2+f(z)^2)^2} %
+2f(z)f'(z)\dE\frac{1}{(|G_z|^2+f(z)^2)^2}. 
\]
Hence,
\begin{align*}
  - \pd \beta (z)
  & =  1 - \dE \frac{|G_z|^2}{(|G_z|^2+f(z)^2)^2} %
  +\frac{\ABS{\dE\frac{G_z}{(|G_z|^2+f(z)^2)^2}}^2}%
  {\dE\frac{1}{(|G_z|^2+f(z)^2)^2}} \\
  & = \dE\frac{f(z)^2}{(|G_z|^2+f(z)^2)^2} %
  +\frac{\ABS{\dE\frac{G_z}{(|G_z|^2+f(z)^2)^2}}^2}%
  {\dE\frac{1}{(|G_z|^2+f(z)^2)^2}}.
\end{align*}
From what precedes, if $ z \in \Si$, $f(z) > 0$. We have thus proved that the
density of $\mu_{c + g} $ is positive on $\Si$, the interior of $\bar \Si$,
and given by $1/\pi$ times the above expression, while on $\dC \backslash \bar
\Si$ the density is $0$. In particular,  the support of $\mu_{c + g} $ is
$\bar \Si$. This concludes the proof of theorem \ref{th:propmu}.

\section{Extremal eigenvalues: Proof of theorems \ref{th:support} and \ref{th:support2}}
\label{se:proof:th:support}

\subsection{Proof of theorem \ref{th:support}}
The next lemma allows us to control the spectral norm of the diagonal matrix $\Ul D$ defined in \eqref{bars}. The proof uses
a refined central limit theorem together with estimates for the maximum of
i.i.d.\ standard Gaussian random variables. We refer to \cite[theorem
1.5]{MR2206341} for a proof.

\begin{lemma}\label{le:maxDii}
  Under the assumptions of theorem \ref{th:support}, almost surely
  \[
  \max_{1\leq i\leq n} |\Ul D_{ii}| = \si \sqrt{2n\log(n)}\,(1+o(1)).
  \]
\end{lemma}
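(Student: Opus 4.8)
The plan is to set $Y_{ik}:=X_{ik}-m$, so that $\Ul D_{ii}=\sum_{k=1}^n Y_{ik}$ is a sum of i.i.d.\ real (since $\SUPP(\cL)\subset\dR_+$) centered random variables of variance $\si^2$ and, by hypothesis, finite fourth moment, and to use that for each fixed $n$ the row sums $(\Ul D_{ii})_{1\le i\le n}$ are independent because they involve disjoint blocks of $X$. Write $\bar\Phi(x):=(2\pi)^{-1/2}\int_x^\infty e^{-u^2/2}\,du$, so that $\bar\Phi(x)=(1+o(1))(x\sqrt{2\pi})^{-1}e^{-x^2/2}$ as $x\to\infty$. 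The analytic input I would use is the moderate deviation asymptotics for i.i.d.\ sums having a finite moment of order $>2$ (Cram\'er, Linnik, Petrov):
\[
\dP\bigl(\pm\,\Ul D_{11}\ge x\,\si\sqrt n\bigr)=\bar\Phi(x)\,(1+o(1))\qquad\text{uniformly on }0\le x\le\sqrt{3\log n},
\]
which is legitimate here because $\sqrt{3\log n}$ lies deep inside the moderate deviation window permitted by the fourth moment. Taking $x=c\sqrt{2\log n}$, so that $e^{-x^2/2}=n^{-c^2}$, this gives for each fixed $c>0$ a constant $C=C(c)>0$ with
\[
C^{-1}n^{-c^2}(\log n)^{-1/2}\ \le\ \dP\bigl(|\Ul D_{11}|\ge c\,\si\sqrt{2n\log n}\bigr)\ \le\ C\,n^{-c^2}(\log n)^{-1/2}\qquad(n\text{ large}).
\]

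The lower bound is then immediate. Fix $\veps\in(0,1)$, set $s_n:=(1-\veps)\si\sqrt{2n\log n}$ and $p_n:=\dP(|\Ul D_{11}|\ge s_n)$; using independence of the rows and the estimate above with $c=1-\veps$,
\[
\dP\Bigl(\max_{1\le i\le n}|\Ul D_{ii}|<s_n\Bigr)=(1-p_n)^n\le e^{-np_n},\qquad np_n\ \ge\ C^{-1}n^{\,\veps(2-\veps)}(\log n)^{-1/2}\ \gg\ \log n,
\]
so $\dP(\max_i|\Ul D_{ii}|<s_n)\le n^{-2}$ for $n$ large, which is summable; Borel--Cantelli gives $\max_i|\Ul D_{ii}|\ge s_n$ a.s.\ for $n$ large, and letting $\veps$ decrease to $0$ along a sequence, $\liminf_n\max_i|\Ul D_{ii}|/(\si\sqrt{2n\log n})\ge1$ a.s.

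The upper bound is where the main work lies. The naive union bound only gives $\dP(\max_i|\Ul D_{ii}|\ge(1+\veps)\si\sqrt{2n\log n})\le C\,n^{\,1-(1+\veps)^2}(\log n)^{-1/2}=C\,n^{-\veps(2+\veps)}(\log n)^{-1/2}$, which is \emph{not} summable in $n$ when $\veps$ is small. I would therefore run Borel--Cantelli along a slowly growing geometric subsequence $n_j:=\CEIL{(1+\delta)^j}$ with $\delta=\delta(\veps)>0$ small: along it the bound becomes $\lesssim(1+\delta)^{-j\veps(2+\veps)}(\log n_j)^{-1/2}$, which decays geometrically in $j$ and is summable, so a.s.\ $\max_{i\le n_j}|\Ul D_{ii}|<(1+\veps)\si\sqrt{2n_j\log n_j}$ eventually. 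To pass from the subsequence to all $n$, for $n_j\le n\le n_{j+1}$ I would write $|\Ul D_{ii}|\le|\Ul D_{ii}^{(n_{j+1})}|+\max_{n_j\le n'\le n_{j+1}}\bigl|\sum_{k=n'+1}^{n_{j+1}}Y_{ik}\bigr|$, where $\Ul D_{ii}^{(n_{j+1})}$ denotes the row sum of the $n_{j+1}\times n_{j+1}$ matrix; the first term is handled by the subsequence bound (with $n_{j+1}/n\to1$), and the second term, a maximum of partial sums over a window of only $\asymp\delta n_j$ indices, is $\le\veps\si\sqrt{2n_j\log n_j}$ simultaneously for all $i\le n_{j+1}$ with overwhelming probability provided $\delta$ is chosen small relative to $\veps$, by a maximal inequality (L\'evy--Ottaviani) combined with the same moderate deviation estimate. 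A final Borel--Cantelli then gives $\limsup_n\max_i|\Ul D_{ii}|/(\si\sqrt{2n\log n})\le1$ a.s., which together with the lower bound is the claim. The only genuinely non-elementary ingredient is the moderate deviation estimate, and this together with the subsequence/interpolation bookkeeping is precisely what is carried out in \cite[Theorem~1.5]{MR2206341}, to which one may simply refer.
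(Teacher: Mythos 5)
Your proposal ultimately defers to the same reference, \cite[Theorem~1.5]{MR2206341}, as the paper does, so the route is the same, and the ingredients you list (a moderate deviation estimate, Borel--Cantelli along a geometric subsequence, a maximal inequality to interpolate between consecutive scales) correctly describe the mechanism behind that cited result.

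One step in the sketch is wrong as stated, however. Under only a finite fourth moment, the two-sided normal tail asymptotics
\[
\dP\bigl(\pm\,\Ul D_{11}\ge x\,\si\sqrt n\bigr)=\bar\Phi(x)\,(1+o(1))
\]
holds uniformly only up to $x\approx\sqrt{2\log n}$, not up to $\sqrt{3\log n}$ as you claim. The general threshold under a finite $p$-th moment is $\sqrt{(p-2)\log n}$: beyond it the single-large-jump contribution $n\,\dP(|\bx-m|>x\si\sqrt n)$ is of the same order as or dominates $\bar\Phi(x)$, so the ratio no longer converges to $1$ for worst-case laws having exactly a fourth moment. Consequently your two-sided estimate $p_n\asymp n^{-c^2}(\log n)^{-1/2}$ is legitimate at $c=1-\veps$ (which is all the lower bound needs), but at $c=1+\veps$ you are already past the threshold and the two-sided form fails. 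The gap is repairable because the upper bound needs only a one-sided estimate: a Fuk--Nagaev-type inequality gives
\[
\dP\bigl(|\Ul D_{11}|\ge(1+\veps)\si\sqrt{2n\log n}\bigr)
\le(1+o(1))\,\bar\Phi\bigl((1+\veps)\sqrt{2\log n}\bigr)+\cO\PAR{\frac{1}{n(\log n)^{2}}},
\]
and the additive correction, after the union bound over the $n$ independent rows, contributes an extra $\cO((\log n)^{-2})$ which is still summable along $n_j=\CEIL{(1+\delta)^j}$; so the Borel--Cantelli and interpolation steps go through unchanged. With this one correction your outline is a faithful account of what \cite[Theorem~1.5]{MR2206341} establishes, and citing it, as you and the paper both do, is the efficient way to finish.
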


Next, we observe that from the Bauer-Fike theorem \cite[theorem
25.1]{MR2325304} one has that the eigenvalues of $\Ul L=\Ul X-\Ul D$ are all
contained in the subset of $\dC$ defined by
\[
\bigcup_{i=1}^n B(-\Ul D_{ii},s_1(\Ul X)),
\]
where $B(z,t)$ stands for the Euclidean closed ball (actually a disk) around
$z$ with radius $t$, and $s_1(\Ul X)$ is the largest singular value of $\Ul
X$. Since $s_1(\Ul X)=2\si \sqrt{n}\,(1+o(1))$ (by \cite[theorem
2]{MR1235416}), using lemma \ref{le:maxDii} one finds that all eigenvalues
$\Ul \la$ of $\Ul L$ must satisfy \eqref{radius1}.

We turn to the proof of \eqref{radius2}. We observe that from the Bauer-Fike
theorem all eigenvalues of $L$ must be contained in the subset of $\dC$
defined by
\[
\bigcup_{i=1}^n B(\zeta_i,s_1(\Ul X)),
\]
where $\zeta_1 \geq \dots \geq \zeta_n$ are the ordered (real) eigenvalues of $-\Ul D+mJ-mnI$.
The eigenvalues of $mJ-mnI$ are easily seen to be $z_1=0$ and
$z_2=\cdots=z_n=-mn$.   Thus, the bound $s_1(\Ul
X)=\si \sqrt{n} ( 2 + o(1))$ proves statement  \eqref{radius2} on $\Im (\la)$. 

For the statement on $\Re (\la)$, we first notice that under our assumptions one certainly has 
\begin{equation}\label{eq:hyp_sig3}
  \varlimsup_{n \to \infty} \frac{ 2 \si }{m} \sqrt{ \frac{2  \log(n) }{  n }} < 1.
\end{equation}
Moreover, from Weyl's inequality, $\max_{1\leq j\leq n}|\zeta_j -z_j| \leq \max_i|\Ul D_{ii}|$, so that one has
\[
|\zeta_1| \leq \max_{1\leq i\leq n}|\Ul D_{ii}| %
\quad\text{and}\quad %
\max_{2\leq j\leq n}|\zeta_j+mn| \leq \max_{1\leq i\leq n}|\Ul D_{ii}|.
\]
If $n$ is large enough, by lemma \ref{le:maxDii}, the bound $s_1(\Ul
X)=\si \sqrt{n} ( 2 + o(1))$, and using \eqref{eq:hyp_sig3}, we see that 
\[
B(\zeta_1,s_1(\Ul X))\cap B(\zeta_j,s_1(\Ul X))=\varnothing,
\]
for all $2\leq j\leq n$. Thus, a continuity argument \cite[proof of
Gershgorin's theorem 6.1.1]{MR1084815} implies that apart from the trivial eigenvalue $\la=0$,
which belongs to $B(\zeta_1,s_1(\Ul X))$, all other eigenvalues of $L$ belong
to $\cup_{j=2}^n B(\zeta_j,s_1(\Ul X))$. Using the bound $s_1(\Ul X)=2\si
\sqrt{n}\,(1+o(1))$ and lemma \ref{le:maxDii} we see that any $\la\neq 0$
in the spectrum of $L$ must satisfy \eqref{radius2}.

\subsection{Proof of theorem \ref{th:support2}}

Since $\| \Ul L \| \leq \| \Ul X \| + \| \Ul D \| $, we may bound separately
the norms of $\| \Ul X \|$ and $\| \Ul D \|$. We have
\[ 
\| \Ul D \| =  \max_{1 \leq i \leq n} \ABS{ \sum_{j =1 } ^ n \Ul X_{ij} }. 
\]
By assumption there exists $a > 0$ such that with probability one, $| \Ul
X_{ij} | \leq a$. From Bennett's inequality, for any $1 \leq i \leq n$, $t>0$:
\[
\dP \PAR{ \ABS{ \sum_{j =1 } ^ n \Ul X_{ij} } \geq t \si \sqrt{n}} \leq 2 \exp \PAR{  - \frac{ \si^2 n } { a^2 } h \PAR{ \frac{ a t}{ \si \sqrt{n}} } },  
\]
where $h(s) = (1 +s ) \log (1 + s ) - s \sim s^2 / 2 $ as $s$ goes to $0$. We
choose $t = \sqrt{2c\log(n)}$, we find by \eqref{eq:hyp_sig4}
\[
\dP \PAR{ \ABS{ \sum_{j =1 } ^ n \Ul X_{ij} } %
  \geq \si \sqrt{2cn\log(n)}} %
\leq n^ { - c ( 1 + o(1)) }.
\]
In particular, if $c  =  2 (1+ \veps)^2$, from the union bound, for $n \gg1 $, 
\[
\dP\PAR{\max_{1\leq i\leq n}\ABS{\sum_{j=1}^n\Ul X_{ij}} %
  \geq 2 ( 1 + \veps) \si \sqrt{n\log(n)}} %
\leq n^ { - 1 + \veps }.
\]
Hence, from Borel-Cantelli lemma we get  a.s. for $n \gg 1$, 
\[
\| \Ul D \| \leq   ( 2 + o(1))  \si \sqrt{n\log(n)}. 
\]
We now turn to the bound on $\| \Ul X\|$. This is a much more delicate matter.
Fortunately, we may use a result by Vu \cite[theorem 1.4]{MR2384414}, which
extends F\H{u}redi and Koml\'os \cite{MR637828}. It asserts that a.s. for $n
\gg 1$,
\[
\NRM{\Ul X} = (2+o(1)) \si\sqrt{n} +c\si^{\frac{1}{2}}n^{\frac{1}{4}}\log(n).
\]
This proves \eqref{radius12}. 
To prove \eqref{radius22}, observe that by assumption one has again \eqref{eq:hyp_sig3}.
Thus, we may repeat the argument in the
proof of theorem \ref{th:support}.

\section{Invariant measure: Proof of theorem \ref{th:invmeas}}

\label{sec:invmeas}

We start by proving the matrix $L$ is irreducible. Consider the graph $G$ on
$\{ 1, \ldots , n\}$ whose adjacency matrix is $A = (A_{ij} )_{1 \leq i,j \leq
  n}$ with $A_{ii} = 0$ and $A_{ij} = \IND_{\{X_{ij} \ne 0\}}$ for $i \ne j$.
Note that $G$ is an oriented Erd\H{o}s-R\'enyi random graph where each edge is
present independently with probability $p := \dP(X_{ij} \ne 0)$. To prove 
irreducibility of $L$, we shall prove that the oriented graph $G$ is
connected. Hence, by a fundamental result of Erd\H{o}s and R\'enyi
\cite{MR0120167}, see also e.g.\ \cite[theorem 7.3]{MR1864966},
\cite[Corollary 3.31]{MR1782847}, it is sufficient to check that
\begin{equation}
  \label{eq:limp}
  \varliminf_{n \to \infty} \frac{ p  n } {\log(n) }  > 1. 
\end{equation}
Note that the cited references deal with non-oriented Erd\H{o}s-R\'enyi
graphs. This does not change much, see the discussion \cite[p.~2]{MR2885424}.
For either model A or model B, the statement \eqref{eq:limp} follows from
assumption \eqref{eq:hyp_sig2}.

We may now turn to the analysis of the invariant measure $\Pi$. We will rely
on a method that has already been successfully used in random matrix models
with finite rank perturbations, for example in \cite{MR2782201,MR3010398}.
We write
\[
L = \Ul L + m \phi \phi ^ * - m n I,
\]
with $\phi = (1, \ldots, 1)^\top$. Then, if $z$ is not an eigenvalue of $\Ul
L$, we have
\begin{align*}
  \det ( L - z I + m n I ) %
  & = \det(\Ul L + m \phi \phi ^ * - z I) \\
  & = \det(I+  m ( \Ul L  - z I)^{-1} \phi\phi^*)\det(\Ul L- z I) \\
  & = \PAR{1 +m\phi ^ * (\Ul L-zI)^{-1}\phi}\det ( \Ul L  - z I) ,
\end{align*}
where at the last line we have used the well known Sylvester determinant theorem: for all
$A \in \cM_{n,p} (\dC)$, $B \in \cM_{p,n} (\dC)$,
\[
\det ( I_n + A B ) = \det ( I_p + B A  ).
\]
In particular, $z - m n$ will be an eigenvalue of $L$ if and only if
\[
f(z) := 1 +  m \phi ^ * ( \Ul L  - z I)^{-1} \phi  = 0.
\]
Further, 
if $z$ is not an eigenvalue of $\Ul L$,
\begin{align*}
  \phi^* ( \Ul L  - z I)^{-1}   ( L - z I  + mn I   ) %
  & = \phi^* ( \Ul L  - zI )^{-1}   ( \Ul L - z I  + m \phi \phi ^ *   ) \\
  & = f(z) \phi^*.
\end{align*}
We deduce that if $z - m n$ is an eigenvalue of $L$ then $\phi^*(\Ul
L-zI)^{-1}$ is a left eigenvector of $L$ with eigenvalue $z-mn$. Now $0$ is an
eigenvalue of $L$ and, by theorems \ref{th:support}-\ref{th:support2}, a.s.\
for $n\gg 1$, $mn$ is not an eigenvalue of $\Ul L$. Hence from what precedes,
for $1 \leq i \leq n$,
\[
\Pi_i = \frac{ u_i} { \sum_{k=1} ^ n u_k},
\]
where 
\[
u^* =   \phi^ * \PAR{  I - \frac{\Ul L}{mn}}^{-1}.
\]
From the resolvent identity, 
\[ 
\PAR{  I - \frac{\Ul L}{mn}}^{-1} - I %
= \frac{\Ul L}{mn} \PAR{  I - \frac{\Ul L}{mn} }^{-1}  .
 \]
Thus, using theorems \ref{th:support}-\ref{th:support2}, and $\|\phi\|=\sqrt n$,
\[
\NRM{\phi - u} %
\leq \NRM{\phi}\NRM{\frac{\Ul L}{mn}}\NRM{\PAR{ I - \frac{\Ul L}{mn} }^{-1}} %
= \cO\PAR{ \frac{\si} { m} \sqrt{\log(n)}} %
+\cO\PAR{ \frac{\sqrt{\si}} { m} \frac{ \log(n) }{n^{1/4}} } . %
\]
In other words, writing $u_i = 1 + \veps_i$, we find
\begin{equation*}
\sum_{k=1}^n | \veps_k | %
\leq \sqrt{n}\PAR{ \sum_{k=1}^n \veps^2_i }^{\frac{1}{2}} %
= \cO\PAR{ \frac{\si} { m} \sqrt{n\log(n)}} %
+\cO\PAR{ \frac{\sqrt{\si}} { m} n^{1/4} \log(n) } . %
\end{equation*}
Consequently, uniformly in $i=1,\dots,n$:
\[
\Pi_i = \frac{ 1 + \veps_i } { n +  \sum_{k=1} ^ n \veps_k} %
= \frac{1}{n} + \frac{ \veps_i}{n} %
+  \cO\PAR{\frac{\si}{m}\sqrt{\frac{\log(n)}{n^3}}} %
+ \cO\PAR{\frac{\sqrt{\si}}{m}\frac{\log(n)}{n^{7/4}}}.
\]
Therefore,
\begin{align*}
  \sum_{i=1}^n \ABS{\Pi _i - \frac{1}{n}} 
  & \leq  \frac{1}{n} \sum_{i=1}^n\PAR{\ABS{\veps_i} %
    +\cO\PAR{\frac{\si}{m}\sqrt{\frac{\log(n)}{n}}} %
    +\cO\PAR{\frac{\sqrt\si}{m}\frac{\log(n)}{n^{7/4}}}}\\
  & = \cO\PAR{\frac{\si}{m}\sqrt{\frac{\log(n)}{n}}} %
  +\cO\PAR{\frac{\sqrt\si}{m}\frac{\log(n)}{n^{3/4}}}.
\end{align*}


        

\appendix

\section{Concentration function and small ball probabilities}

For $x \in \dC^n$ and $ t \geq 0$, define the concentration function  as
\begin{equation}\label{concfunc}
  p_x ( t ) %
  = \max_{w \in \dC} \dP \PAR{ \ABS{ \sum_{i=1} ^ n X_{i} x_ i - w  } \leq t  } ,  
\end{equation}
where $\{X_i, \,i=1,\dots,n\}$ are i.i.d.\  copies of the complex valued random
variable $\bx$ with law $\cL(n)$. Throughout this section we assume that $\bx$
satisfies \eqref{eq:hyp_tightness1} and $n\si^2\geq 1$ only. This implies the
existence of constants $a,b > 0$ such that for all $n \geq 1$, letting
$\bx^{(a)}$ denote the random variable $\bx$ conditioned on $|\bx|\leq a$:
\begin{gather}\label{eq:defstilde}
  \dP \PAR{ | \bx|  \leq a } \geq b\,,\quad %
  \Wt \si^2 =  \VAR\PAR{\bx^{(a)}} \geq b \si^2,
  \\
  \label{eq:defstilde2}
  \dP ( a^ {-1} \leq | \bx  |  \leq a ) \geq b \si^2.
\end{gather}

We start with a  consequence of Kolmogorov-Rogozin inequality \cite{MR0101545,MR0131894}. 

 \begin{theorem}[Concentration bound]\label{th:halasz}
   Let $ 1 \leq m \leq n$. There exists a constant $C>0$ independent of
   $(m,n)$ such that if $|x_i | \geq 1$, $1 \leq i \leq m $, then for all $t
   \geq 0$,
   \[
   p_x \PAR{ t } \leq  \frac{ C }{\si \sqrt{m}} \PAR{ t + 1 } . 
   \]
 \end{theorem}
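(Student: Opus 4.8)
\smallskip
The plan is to combine the Esseen concentration inequality on $\dR^2\cong\dC$ with a symmetrization/truncation step and, for the resulting Fourier integral, with the number‑theoretic estimate of Hal\'asz \cite{MR0494478}. First I would make three harmless reductions. Since the summands $X_ix_i$ with $i>m$ are independent of the others, conditioning on them gives $p_x(t)\le p_{(x_1,\dots,x_m)}(t)$, so we may assume $n=m$ and $|x_i|\ge 1$ for all $i$; if $\si^2 m\le 1$ then $\frac{C}{\si\sqrt m}(t+1)\ge C\ge 1\ge p_x(t)$ once $C\ge 1$, so we may also assume $\si^2 m>1$. Writing $S=\sum_{i=1}^m X_ix_i$ and $\chi_S(\xi)=\dE\,e^{i\DOT{\xi,S}}$, independence gives $|\chi_S(\xi)|^2=\prod_{i=1}^m\dE\cos\DOT{\xi,\tilde X_ix_i}$, where $\tilde X_i=X_i-X_i'$ is a symmetrization and each factor $=|\dE e^{i\DOT{\xi,X_ix_i}}|^2\ge 0$. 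Conditioning on the event $\{|X_i|\le a,\ |X_i'|\le a\}$ (of probability $\ge b^2$ by \eqref{eq:defstilde}), and using that $\tilde X_i$ stays symmetric under this conditioning, one obtains $1-\dE\cos\DOT{\xi,\tilde X_ix_i}\ge b^2\,\hat\kappa_i(\xi)$ with $\hat\kappa_i(\xi):=\dE\SBRA{1-\cos\DOT{\xi,\hat X_ix_i}}$, where $\hat X_i$ is the symmetrization of $\bx$ conditioned on $|\bx|\le a$, so $|\hat X_i|\le 2a$ and $\VAR(\hat X_i)\ge 2b\si^2$. Hence, using $\prod(1-u_i)\le e^{-\sum u_i}$,
\[
|\chi_S(\xi)|\le\exp\PAR{-\frac{b^2}{2}\sum_{i=1}^m\hat\kappa_i(\xi)}.
\]

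Next, recall the Esseen‑type smoothing inequality: for an $\dR^2$‑valued random vector $S$ and $\lambda>0$, $\sup_{w\in\dC}\dP(|S-w|\le\lambda)\le C\lambda^2\int_{|\xi|\le 1/\lambda}|\chi_S(\xi)|\,d\xi$. Applying it with $\lambda=1\vee t$ and using that $p_x$ is nondecreasing, it suffices to prove the key bound
\[
\int_{|\xi|\le r}|\chi_S(\xi)|\,d\xi\le\frac{Cr}{\si\sqrt m}\qquad(0<r\le 1),
\]
since then $p_x(t)\le C(1\vee t)^2\cdot\frac{C}{\si\sqrt m\,(1\vee t)}=\frac{C'(1\vee t)}{\si\sqrt m}\le\frac{C'(t+1)}{\si\sqrt m}$. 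Using the displayed bound on $|\chi_S|$ and $1-\cos\theta\ge c\,\DIST(\theta,2\pi\dZ)^2$, the key bound reduces to
\[
\int_{|\xi|\le r}\exp\PAR{-c\sum_{i=1}^m\dE\,\DIST\PAR{\DOT{\xi,\hat X_ix_i},2\pi\dZ}^2}d\xi\le\frac{Cr}{\si\sqrt m}\qquad(0<r\le 1).
\]

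This last inequality is exactly the two‑dimensional Hal\'asz estimate for sums with arbitrary coefficients: a level‑set/counting argument controls, for each $s$, the $\xi$‑volume of the frequencies that ``resonate'' with fewer than $s$ of the $x_i$, the hypothesis $|x_i|\ge 1$ forcing generically of order $m$ indices to contribute, while $\VAR(\hat X_i)\ge 2b\si^2$ supplies the factor $\si$ in the denominator (the $\sqrt m$ coming from the $m$ independent contributions). Assembling the Esseen inequality with this estimate yields the theorem; in the real case the argument is identical once one notes that for fixed $\xi$ at least a positive proportion of the $x_i$ are non‑orthogonal to $\xi$.

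I expect the main obstacle to be precisely this Fourier estimate. For general coefficients one cannot replace $|\chi_S(\xi)|$ by a Gaussian‑type decay $\exp(-c\,m\si^2|\xi|^2)$, because the $x_i$ are complex and unbounded (only $|x_i|\ge 1$ is assumed): for a fixed small $\xi$ many of the scalars $\DOT{\xi,\hat X_ix_i}$ need not be small modulo $2\pi$, and estimating how much $\xi$‑volume is lost on near‑resonant frequencies is the combinatorial core of Hal\'asz's method that has to be imported here. By comparison, the bookkeeping in the symmetrization/truncation reduction and the degenerate regime $\si^2 m\lesssim 1$ (where the inequality is vacuous) are routine.
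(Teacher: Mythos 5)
Your opening reductions are sound and essentially parallel the paper's (reduce to $n=m$, handle $\si^2 m\lesssim 1$ trivially, symmetrize and truncate to a variable of modulus $\leq 2a$ and variance $\gtrsim\si^2$, then feed the resulting bound on $|\chi_S|$ into an Esseen smoothing inequality). But the entire difficulty of the theorem has been displaced into your unproved claim
\[
\int_{|\xi|\le r}\exp\Big(-c\sum_{i=1}^m\dE\,\DIST\big(\DOT{\xi,\hat X_ix_i},2\pi\dZ\big)^2\Big)d\xi\le\frac{Cr}{\si\sqrt m},\qquad 0<r\le 1,
\]
which you label a ``two-dimensional Hal\'asz estimate''. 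There is no such statement in the reference \cite{MR0494478} (which treats scalar sums), and proving a genuinely two-dimensional version would require a nontrivial level-set/counting argument with complex coefficients $x_i$ that are unbounded and can be nearly orthogonal to a fixed direction $\xi$. You acknowledge this is ``the combinatorial core'' that ``has to be imported here'', but you neither import it nor prove it, so as written the proof has a real gap precisely where the theorem is hard.

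The paper's proof takes the opposite route, and this is what makes it close cleanly: it first proves the bound for \emph{real} $\bx$ and real $x_i$ by a direct one-dimensional computation. After symmetrizing via $Y=|X_1-X_2|/2$ and using the identity $|\dE e^{i\te X_k x_k}|^2=\dE\cos(2\te Y x_k)$ together with $|z|\le\exp(-\tfrac12(1-|z|^2))$, the characteristic function is dominated by $\exp(-\sum_k\dE\sin^2(\te Y x_k))$, and the resulting one-variable integral is estimated explicitly by a change of variable and splitting into periods of $\sin$, giving $\cO(1/(\si\sqrt m))$. It then reduces the complex case to the real case: it observes that one may assume at least $m/2$ of the $x_i$ have $\Re(x_i)\ge 1/\sqrt 2$, and uses the rotation lemma \cite[lemma 2.4]{MR2409368} to find a phase $u$ with $\VAR\big(\Re(e^{iu}\bx z)\big)\ge|\Re(z)|^2\si^2/2$, after which one projects onto a real line and invokes the real case. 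This ``rotate and project'' step is exactly the device that lets one avoid a bona fide planar Hal\'asz estimate. Your closing remark that ``in the real case the argument is identical once one notes that for fixed $\xi$ at least a positive proportion of the $x_i$ are non-orthogonal to $\xi$'' has the logic inverted: the one-dimensional case is the primary one in the paper, and the two-dimensional case is derived from it, not the other way around. To complete your proposal you would need either to carry out the missing two-dimensional Fourier estimate, or to insert the reduction to the real case before applying Esseen.
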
 

\begin{proof}
  For any constant $c > 0$, $e^{ -c m}$ is smaller than
  $\frac{1}{\si\sqrt{m}}$ for all $m \gg 1$. Hence, from Hoeffding inequality,
  it is sufficient to prove the statement conditioned on $\{ | X_{1} | \leq a,
  \ldots, |X_{k} | \leq a \}$ with $k = b m /2$, if $b>0$ is as in
  \eqref{eq:defstilde}. At the price of replacing $m$ by $bm/2$ and $\si$ by
  $\Wt\si$, one can replace $\bx$ by $\bx^{(a)}$ from the start. For ease of
  notation, we will simply assume that the variables are bounded by $a$.

  We first assume that all $X_{i}$ and $x_{i}$ are real valued. It is
  sufficient to prove the statement for $t \geq 1$. Following Rudelson and
  Vershynin in \cite{MR2407948}, set $Y = | X_{1} - X_{2} | /2$. For any $t >
  0$,
  \[
  \dP  ( |Y| \geq t ) \geq \dP ( |X_1 |\geq 3 t  ) \dP ( |X_2 |\leq t ). 
  \]
  Using \eqref{eq:defstilde2}, \eqref{eq:hyp_tightness1} and Markov
  inequality, we find, for some constant $c_0 > 0$,
  \[
  \dP  ( |Y| \geq 1 / ( 3a)  ) \geq b \si^2 ( 1 - \dE |X_2| ^2 a^2 ) \geq b \si^2 ( 1 - c_0 \si^2 a^2 ). 
  \]
  In particular, if $c_0 \si^2a^2 < 1/2 $ then there exists a constant $c > 0$
  such that
  \[
  \dP ( |Y| \geq c ) \geq c\si^2. 
  \]
  On the other hand, if $c_0 \si^2a^2 \geq 1/2 $, from \eqref{eq:defstilde2},
  we may use Paley-Zygmund inequality and deduce easily that the above
  inequality also holds (for some new constant $c$).

  Let $\hat Y$ the law of the variable $Y$ conditioned on $|Y|\geq c$. It
  follows that if $f$ is a non-negative measurable function,
  \begin{equation}\label{eq:fYhatY}
    \dE f (Y ) \geq c \si^2 \dE f ( \hat Y ). 
  \end{equation}
  From Esseen inequality, cf.\ \cite[lemma 4.2]{MR2407948}, for some universal
  constant $C> 0$,
  \[
  p_x ( t) %
  \leq C \int_{0}^{2\pi}\!\ABS{\dE e^{i\frac{\te}t\sum_{k=1}^m X_{k}x_k}}\,d\te %
  = C\, t \int_{0}^{2\pi t^{-1}}\!\prod_{k=1}^m\ABS{\dE e^{i\te X_{k}x_k}}\,d\te.
  \]
  Now, we use the bound $|z| \leq \exp(- \frac{1}{2}(1-|z|^2))$ valid for all
  $z$, and the identity,
  \[
  \ABS{ \dE e^ { i \te X_{k}x_k  } }^2 =  \dE \cos ( 2 \te Y x_k ),
  \]
  to write
  \[
  \prod_{k=1}^n \ABS{ \dE e^ { i \te X_{k} x_k } }\leq \prod_{k=1}^m 
  \exp{(-\frac12(1-\dE\cos ( 2 \te Y x_k))}=\prod_{k=1}^m
  \exp{(-\dE\sin^2 ( \te Y x_k))}.
  \]
  Therefore,
  \[
  p_x ( t) \leq C\, t \int_{ 0}^ {2\pi t^{-1}}  \exp { \Big(-  \sum_{k=1} ^ m \dE  \sin ^ 2 ( \te x_k Y ) \Big)} d\te. 
  \]
  This implies, using \eqref{eq:fYhatY} and $|x_k|\geq 1$, 
  \[
  p_x ( t)
  \leq C\,t \,\sup_{\la \geq c} \int_{0}^ {2\pi t^{-1}}  \exp {\Big( - c\, m \si^2 \sin ^ 2 ( \te \la ) \Big)} d\te.  
  \]
  By the change of variable $\te'=\te\la$, using $t^{-1} \leq 1$, one
  has 
  \begin{equation}\label{pxtla}
    p_x ( t)  \leq  C\, t   \,\sup_{\la \geq c } \frac{ 1} {\la}  \int_{0}^ { 2\pi\la}  e^ { - c\, m \si^2 \sin ^ 2 ( \te ) } d\te 
    \leq \frac{ C'\,t }{ \si \sqrt{m}}.  
  \end{equation}
  The right hand side follows easily by decomposing the integral along the
  periods of $\sin(\te)$.

  It remains to prove the statement for complex random variables. It is a
  consequence of the real case. Indeed, we may assume for example that at
  least $m/2$ of the $x_i$'s satisfy $\Re ( x_i ) \geq 1/ \sqrt{2}$
  (otherwise, this is satisfied by the imaginary part). We then notice that
  the function $p_x$ does not change if we rotate $\bx$ into $e^{i u} \bx$
  with $u \in [0,2\pi]$. But, we may argue as in \cite[lemma 2.4]{MR2409368} :
  there exists $u$ such that for all $z\in \dC$,
  \[
  \VAR \Big( \Re(  e^{i u}  \bx  z )\Big)  \geq \Re(z) ^ 2 \frac{ \si^2}{2}.  
  \] 
  Finally, we note that 
  \[
  p_x ( t ) \leq \max_{w \in \dC} \dP \Big( \Big|\Re \Big( \sum_{k=1} ^ m e^{i
    u} X_{k} x_ k - w \Big) \Big| \leq t \Big) = \max_{w \in \dR} \dP \Big(
  \Big| \sum_{k=1} ^ m \Re( e^{i u} X_{k} x_ k) - w \Big| \leq t \Big) .
  \]
  Therefore, the case of complex random variables follows from the case of
  real random variable. 
\end{proof}

If $\si\sqrt{m}$ is not large, the bound in theorem
\ref{th:halasz} becomes useless. In this case, we may however give a weak
bound on the concentration function.

\begin{lemma}\label{le:GT}
  Let $ 1 \leq m \leq n$. There exists a constant $c_0 >0$ independent of
  $(m,n)$ such that if $x$ satisfies $|x_i|\geq 1$, for $1\leq i \leq m$, then
  \begin{equation}\label{claimGT}
    p_x \PAR{c_0} \leq 1- c_0   ( m \si^2  \wedge 1) \,.  
  \end{equation}
\end{lemma}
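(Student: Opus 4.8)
The plan is to control the concentration function by symmetrization, thereby reducing the claim to an anti-concentration estimate for a symmetric sum that holds as soon as ``at least one coordinate is active''. First, since only $x_1,\dots,x_m$ are under control, I would condition on $(X_k)_{m<k\le n}$ and absorb those summands into the shift, which gives $p_x(t)\le\sup_{w\in\dC}\dP(|S_m-w|\le t)$ with $S_m:=\sum_{k=1}^m X_k x_k$, so it suffices to bound this last quantity. Taking an independent copy $S_m'$ of $S_m$ and setting $\tilde S_m:=S_m-S_m'=\sum_{k=1}^m Z_k x_k$ with $Z_k:=X_k-X_k'$ i.i.d.\ symmetric, independence gives, for every $w$, $\dP(|S_m-w|\le t)^2=\dP(|S_m-w|\le t,\,|S_m'-w|\le t)\le\dP(|\tilde S_m|\le 2t)$, hence $\sup_w\dP(|S_m-w|\le t)^2\le 1-\dP(|\tilde S_m|>2t)$. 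Using $\sqrt{1-s}\le1-s/2$, the lemma will follow once I show $\dP(|\tilde S_m|>2c_0)\ge c\,(m\si^2\wedge1)$ for suitable constants $c,c_0>0$.

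For this last estimate I would run a ``single active coordinate'' argument. Arguing exactly as in the proof of Theorem~\ref{th:halasz} --- i.e.\ producing a moderate-size value of $\bx$ and pairing it with a near-zero value in the small-variance regime, and invoking the Paley--Zygmund inequality in the bounded-variance regime, using \eqref{eq:defstilde} and \eqref{eq:defstilde2} throughout --- one obtains constants $r_0,c_1>0$ independent of $n$ with $\dP(|Z_k|\ge r_0)=\dP(|X_1-X_2|\ge r_0)\ge c_1\si^2$. Set $A:=\{1\le k\le m:|Z_k|\ge r_0\}$ and $N:=|A|$; by independence $\dP(N=0)\le(1-c_1\si^2)^m\le e^{-c_1m\si^2}$, and $1-e^{-u}\ge\tfrac12(u\wedge1)$ yields $\dP(N\ge1)\ge\tfrac12(c_1m\si^2\wedge1)$. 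On $\{N\ge1\}$, let $k_0:=\min A$, and write $\tilde S_m=\sum_k\eta_k Z_k x_k$ with $(\eta_k)$ i.i.d.\ Rademacher independent of $(Z_k)$ (which does not change the law of $\tilde S_m$, each $Z_k$ being symmetric): conditionally on $(Z_k)_k$ and $(\eta_k)_{k\ne k_0}$, the variable $\tilde S_m$ takes exactly two values, differing by $2|Z_{k_0}|\,|x_{k_0}|\ge2r_0$ (using $|x_{k_0}|\ge1$ as $k_0\le m$); if $c_0<r_0/2$, the closed disk $\{|z|\le2c_0\}$, of diameter $4c_0<2r_0$, cannot contain both, so $\dP(|\tilde S_m|>2c_0\mid(Z_k)_k,(\eta_k)_{k\ne k_0})\ge\tfrac12$ on $\{N\ge1\}$. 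Taking expectations, $\dP(|\tilde S_m|>2c_0)\ge\tfrac12\dP(N\ge1)\ge\tfrac14(c_1m\si^2\wedge1)\ge\tfrac14(c_1\wedge1)(m\si^2\wedge1)$; combined with the first paragraph this gives $p_x(c_0)\le1-\tfrac18(c_1\wedge1)(m\si^2\wedge1)$, and choosing $c_0:=\min\{r_0/3,\ \tfrac18(c_1\wedge1)\}$ (so that $c_0<r_0/2$ and $c_0\le\tfrac18(c_1\wedge1)$) yields \eqref{claimGT}.

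I expect the main obstacle to be the uniform-in-$n$ single-entry bound $\dP(|X_1-X_2|\ge r_0)\ge c_1\si^2$: a naive Chebyshev bound on $X_1-X_2$ (whose variance is $2\si^2$) degrades precisely when $\si^2$ is bounded away from $0$, so one really has to split into the small- and bounded-variance regimes and lean on \eqref{eq:defstilde2} and Paley--Zygmund, exactly as in the proof of Theorem~\ref{th:halasz}; everything after that is elementary. A minor point requiring care is the double role of the constant $c_0$ as both the radius in $p_x(c_0)$ and the coefficient on the right-hand side of \eqref{claimGT}, which is why it is finally taken as a minimum.
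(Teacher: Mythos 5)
Your proof is correct, but it takes a genuinely different route from the paper's. You reduce to the symmetrized sum $\tilde S_m=\sum_{k\le m}(X_k-X'_k)x_k$, obtain $p_x(t)\le 1-\tfrac12\dP(|\tilde S_m|>2t)$, and then extract a single ``active'' coordinate (one $k$ with $|X_k-X'_k|\ge r_0$) whose Rademacher sign, conditionally on everything else, forces the sum off any disk of radius $<r_0/2$ with probability $\ge\tfrac12$; the whole argument rests on the single-entry bound $\dP(|X_1-X_2|\ge r_0)\ge c_1\si^2$, which is indeed established inside the paper's proof of theorem \ref{th:halasz}. The paper instead first invokes theorem \ref{th:halasz} to dispose of the regime $\si\sqrt m$ large, then proceeds by a recursion on $p_k:=p_{(x_1,\dots,x_k)}(t)$: conditioning on whether $|X_k|$ is small or of order one, and using that $u$ and $u-x_k$ cannot both lie within $t$ of the concentration center when $|x_k|\ge 1$, it shows $p_k\le p_{k-1}-p/2$ as long as $p_{k-1}$ is still large, first for Bernoulli entries and then in general. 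Your symmetrization-plus-Rademacher approach is more modular and avoids both the Bernoulli warm-up and the separate treatment of the two variance regimes at the level of the lemma (that split is hidden inside the single-entry bound instead); the paper's recursion is more elementary in that it never introduces an independent copy, at the price of a more delicate bookkeeping of the sequence $p_k$. Both yield the same type of constant, and your handling of the double role of $c_0$ via the final $\min$ is fine.
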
 

\begin{proof}
  Let us start with some comments. First from \eqref{eq:defstilde2},
  \eqref{eq:hyp_tightness1} and Markov inequality,
  \[
  \dP  ( |X_1 | \geq 1 / ( 2a)  ) \leq  4 \dE |X_1| ^2 a^2  \leq  C \si^2 a^2 \quad  \hbox{ and } \quad  \dP  ( |X_1  | \geq 1 / a  ) \geq b\si^2. 
  \]
  Hence, if $|x_1| \geq 1$, for $n = 1$, we find $p_{x_1} \PAR{ 1 / ( 2a ) }
  \leq 1- c_0 ( \si ^2 \wedge 1 )$. Since the concentration function of the
  sum is bounded by the concentration function of one of its summands, we get
  \eqref{claimGT} for $m=1$.

  Also, from theorem \ref{th:halasz}, if $ \si \sqrt{m}\geq 4 C$, then
  $p_x \PAR{1} \leq 1/2$. Hence, it suffices to prove the statement for $
  \si^2 m \leq 16 C^2$. Then, simple manipulations show that, at the price of reducing the constant 
%
$c_0$, it suffices to prove the statement of
  the lemma with the extra assumptions $ \si^2 m \leq \veps$ and $m \geq 1 /
  \veps$ for some $\veps > 0$ arbitrarily small (but independent of $m,n$).

  We will first give a proof in the specific case when $\bx$ is a Bernoulli
  random variable with parameter $p$. We will generalize the argument
  afterward. We fix $t = 1/4$ and define $p_k = p_{(x_1, \ldots ,x_k)} (t)$.
  We will assume that $p m \leq 1/4$ and $m \geq 4$. We are going to show that
  $p_m \leq 1 - m p / 4$. Let $k \geq 2$ and assume that $p_{k-1} \geq 1 - kp
  $. Let $S = \sum_{i =1} ^{k-1} X_i x_i $ and $u_*$ be such that $p_{k-1} =
  \dP \PAR{ |S - u_*| \leq t }$. We write, for $u \in \dC$,
  \[
  \dP \PAR{ |S+ x_k X_k  - u  | \leq t } = ( 1 - p )  \dP \PAR{ |S - u  | \leq t }  + p \dP \PAR{ |S - u + x_k   | \leq t }  
  \]
  Now, since $|x_k |\geq 1$, $u$ and $u - x_k$ cannot be both at distance less
  than $t = 1/4$ from $u_*$. Using $\dP ( |S - u_* | > t ) \leq k p$ and $\dP
  ( |S - u | \leq t ) \leq p_{k-1}$, we deduce
  \begin{align*}
    p_k & \leq \max \PAR{ ( 1 - p ) p_{k-1}  +  k p^2    ,  ( 1 - p ) k p  + p p_{k-1}   }  \\
    & \leq   \max \PAR{   p_{k-1}  - p (  1 - 2 k p  )  ,   p ( k + 1 - 2 k p  )   } \\
    & \leq  p_{k-1}  - p/ 2,
  \end{align*}
  provided that $pk \leq 1/4$. Now the argument goes as follows, if there
  exists $\lfloor m / 2 \rfloor \leq k < m $ such that $p_{k} \leq 1 - k p $,
  then since $p_{m} \leq p_{k}$, we find $p_m \leq 1 - \lfloor m / 2 \rfloor p
  \leq 1 - m p / 4$. Otherwise, for all $\lfloor m / 2 \rfloor \leq k < m$,
  $p_k \geq 1 - k p$, and we apply recursively the above argument. We find
  $p_m \leq p_{\lfloor m / 2 \rfloor} - ( m -\lfloor m / 2 \rfloor) p / 2 \leq
  1 - m p / 4$. This concludes the proof when $\bx$ is Bernoulli
  random variable.

  In the general case, we take $ t = 1/ ( 4 a)$, with $a>0$ as in \eqref{eq:defstilde}. 
  We have $ p = b \si^2 \leq
  \dP ( |X_1 | \geq 1 / a )$, and, from Markov inequality, $1 - q = 1 - c
  \si^2 \leq \dP ( |X_1 | \leq 1 / (4a) )$. Setting $p_k = p_{(x_1, \ldots
    ,x_k)} (t)$, the above argument gives $p_m \leq 1 - m p / 4$ as soon as
  $\si^2 m$ is small enough. Indeed, as above, let $k \geq 2$ and assume that
  $p_{k-1} \geq 1 - k p $. Note that, with the above notation, if $|X_k - X'_k
  | \geq 3 /( 4 a)$, then $u + x_k X'_k$ and $u + x_k X_k $ cannot be both at
  distance less than $t = a/4$ from $u_*$. In particular, from
  \begin{align*}
    \dP \PAR{ |S+ x_k X_k  - u  | \leq t } 
    \ \leq\ & ( q -p )  p_{k - 1} \\
    &+  ( 1 - q )  \dP \PAR{ |S - u  + x_k X_k | \leq t |\;  |X_k|\leq 1 / (4a)  }  \\ 
    & + p \dP \PAR{ |S - u + x_k  X_k  | \leq t  |\; |X_k|\geq 1 / a   },   
  \end{align*}
  we get
  \begin{align*}
    p_k 
    &\leq\max\PAR{(q-p)p_{k-1}+(1-q)p_{k-1}+kp^2,(q-p)p_{k-1}+(1-q)kp+pp_{k-1}}\\
    &\leq\max\PAR{p_{k-1}-p(1-2kp),p(k+\frac{q}{p}-2kq)}\\
    &\leq p_{k-1}-p/2,
  \end{align*}
  provided that $\si^2 k$ is small enough. The rest of the argument is
  identical.
 \end{proof}

 The following corollary is a version of \cite[lemma
 4.6]{gotze-tikhomirov-new} (our variable $\bx$ satisfies however more general
 statistical assumptions than in \cite{gotze-tikhomirov-new}).
 \begin{corollary}\label{cor2:halasz}
   There exists $c > 0$ such that if $x \in \INCOMP ( \delta, \rho)$, with
   $\de,\rho\in(0,1)$, $u \in \dC^n$, then
   \begin{equation}\label{Xxu}
     \dP ( \| X x - u \|_2 \leq c    \, \rho   \sqrt{\de}) %
     \leq e^ { - c\, n ( 1 \wedge \si^2  \delta n )  }.
   \end{equation}
 \end{corollary}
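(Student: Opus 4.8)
The plan is to combine the spreading property of incompressible vectors (Lemma~\ref{le:incompspread}), the weak small-ball estimate of Lemma~\ref{le:GT}, and a tensorization argument over the independent rows of $X$. Since $x\in\INCOMP(\de,\rho)$, Lemma~\ref{le:incompspread} furnishes a set $\pi\subset\{1,\dots,n\}$ with $|\pi|\geq \de n/2$ and $|x_i|\geq\rho/\sqrt n$ for all $i\in\pi$. Write $R_1,\dots,R_n$ for the rows of $X$, which are independent, so that $\NRM{Xx-u}_2^2=\sum_{k=1}^n\ABS{\ANG{R_k,x}-u_k}^2$. For each fixed $k$, the concentration function is monotone under passage to a subsum: conditioning on the coordinates outside $\pi$ turns $\sum_{i\notin\pi}X_{ki}x_i$ into a deterministic shift, so
\[
\dP\PAR{\ABS{\ANG{R_k,x}-u_k}\leq t}\leq p_x(t)\leq p_{(x_i)_{i\in\pi}}(t),
\]
where $p_y$ denotes the concentration function of $\sum_{i\in\pi}X_{ki}y_i$.

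Next I would rescale the coefficients by $\sqrt n/\rho$, so that they all have modulus $\geq 1$, and apply Lemma~\ref{le:GT} with $m=|\pi|\geq\de n/2$: using $p_{(x_i)_{i\in\pi}}(c_0\rho/\sqrt n)=p_{(x_i\sqrt n/\rho)_{i\in\pi}}(c_0)$ and $|\pi|\si^2\wedge 1\geq\tfrac12(\si^2\de n\wedge 1)$, this yields a constant $c_1>0$, independent of $n,\de,\rho$, with
\[
\dP\PAR{\ABS{\ANG{R_k,x}-u_k}\leq t_0}\leq 1-q,\qquad t_0:=\frac{c_0\rho}{\sqrt n},\quad q:=c_1\PAR{1\wedge\si^2\de n}.
\]
(Only in the non-sparse regime $\si\sqrt{\de n}\gg1$ would one gain anything by invoking instead the sharper Hal\'asz bound through Lemma~\ref{le:smallball}; it is not needed here.)

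The tensorization step then goes through the Laplace transform. Setting $\zeta_k:=\ABS{\ANG{R_k,x}-u_k}$ and using $e^{-\zeta_k^2/t_0^2}\leq e^{-1}$ on $\{\zeta_k>t_0\}$,
\[
\dE e^{-\zeta_k^2/t_0^2}\leq\dP(\zeta_k\leq t_0)+e^{-1}\dP(\zeta_k>t_0)\leq 1-(1-e^{-1})q,
\]
so by independence of the rows $\dE e^{-\NRM{Xx-u}_2^2/t_0^2}\leq e^{-(1-e^{-1})qn}$, and Markov's inequality gives, for every $\lambda>0$,
\[
\dP\PAR{\NRM{Xx-u}_2^2\leq\lambda n t_0^2}\leq e^{\lambda n-(1-e^{-1})qn}.
\]
Choosing $\lambda=\tfrac12(1-e^{-1})q$ produces the factor $e^{-\frac12(1-e^{-1})qn}$, which is of the claimed form $e^{-cn(1\wedge\si^2\de n)}$. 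It remains only to check that $\lambda n t_0^2\geq c^2\rho^2\de$: here $\lambda n t_0^2=\tfrac12(1-e^{-1})q\,c_0^2\rho^2$, and since $n\si^2\geq1$ (a standing hypothesis of the appendix) we have $q\geq c_1\si^2\de n\geq c_1\de$, so $\lambda n t_0^2\geq\tfrac12(1-e^{-1})c_1c_0^2\,\rho^2\de$; any $c$ with $c^2$ below that constant works, after relabeling.

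The step I expect to require genuine care, rather than bookkeeping, is the sparse regime $\si^2\de n<1$: there the per-row bound $1-q$ is only of order $\si^2\de n$ away from $1$, and the naive tensorization (which asks $p+e^{-1}<1$) fails. What saves the argument is the refined inequality $\dE e^{-\zeta_k^2/t_0^2}\leq1-(1-e^{-1})q$, together with the fact — guaranteed by Lemma~\ref{le:GT} — that $q$ degrades no worse than linearly in $\si^2\de n$; this is exactly what makes the product over the $n$ rows shrink like $e^{-cn\si^2\de n}$. Throughout one must also make sure all constants $c_0,c_1,c$ are independent of $n,\de,\rho$, which uses only $n\si^2\geq1$ and the tightness hypothesis \eqref{eq:hyp_tightness1}, both already encoded in Lemma~\ref{le:GT}.
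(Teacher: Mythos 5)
Your proof is correct. It follows the same high-level structure as the paper's proof (spreading via Lemma~\ref{le:incompspread}, a per-row small-ball estimate, then tensorization over the $n$ independent rows), but you streamline it in two ways that are worth noting. First, you rely solely on Lemma~\ref{le:GT} to get the per-row bound $1-q$ with $q=c_1(1\wedge\si^2\de n)$ for all regimes of $\si\sqrt{\de n}$, whereas the paper splits into two cases: when $\si\sqrt{\de n}\geq 4C$ it invokes the sharper Hal\'asz bound of Theorem~\ref{th:halasz} directly to get a per-row bound $\leq 1/2$, and only when $\si\sqrt{\de n}<4C$ does it fall back to Lemma~\ref{le:GT}. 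As you correctly observe, the extra strength of Hal\'asz in the non-sparse regime is unnecessary since Lemma~\ref{le:GT} already gives a bound of the right form, so the case split is cosmetic. Second, your tensorization goes through the Laplace transform $\dE e^{-\zeta_k^2/t_0^2}$ and exponential Markov, whereas the paper uses Hoeffding's inequality (non-sparse case) or Bennett's inequality (sparse case) on the indicator variables $\IND_{\{\zeta_k>t_0\}}$. These are functionally equivalent — both amount to a Chernoff-type bound on a sum of bounded independent variables, and the crucial point you identify (that the per-row gap $q$ degrades only linearly in $\si^2\de n$, so the product over rows still gives $e^{-cn\si^2\de n}$) is exactly what Bennett captures in the paper. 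One minor gain of the paper's two-case presentation is a slightly cleaner constant in the non-sparse regime, but this does not affect the final form of the bound.
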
 
 \begin{proof}
   We have \[ \| X x - u \|_2 = \sqrt{ \sum_{i=1} ^ n S_i ^ 2 } \quad \hbox{
     with } \quad S_i = \ABS{ \sum_{k=1} ^ n X_{ik} x_k - u_i }.\] Thanks to
   lemma \ref{le:incompspread} one may assume that $|x_i|\geq \rho/\sqrt{n}$
   for all $1\leq i\leq \de n/2$. From theorem \ref{th:halasz}, one obtains
   \[
   \dP \PAR{ S_i \leq \frac{ \rho t}{\sqrt{n}} } %
   \leq p_x(\rho t/\sqrt{n}) %
   \leq \frac{ C\, (t+1)}{ \si \sqrt{\delta n } },
   \]
   for some constant $C>0$, and all $t\geq 0$. Assume first that $\si
   \sqrt{\delta n} \geq 4 C$. Then, taking $t = 1$, we get \[ \dP \PAR{ S_i
     \leq \frac{ \rho }{ \sqrt{n}} } \leq \frac{ 1}{ 2 }.
   \]
   The $S_i$'s are independent random variables. By Hoeffding's inequality,
   \[
   \dP \PAR{ \| X x - u \|_2 \leq \rho/2 } %
   \leq \dP\PAR{\sum_{i = 1}^n\IND_{\{S_i\geq\frac{\rho}{\sqrt{n}}\}}\leq n/4} %
   \leq e^{ - n/8 }.
   \]
   This implies \eqref{Xxu}.
   
   It remains to consider the case $\si \sqrt{\delta n} <4 C $. Here, we may
   use lemma \ref{le:GT} with $m=\de n/2$. Therefore, there exists $c_0 > 0$,
   such that
   \begin{equation}\label{claimGT1}
     \dP \PAR{ S_i \leq \frac{ \rho c_0 }{\sqrt{n}} } \leq  p_x \PAR{ \frac{ \rho c_0 }{ \sqrt{n}} } \leq 1- c_0  \si^2  \delta n  .  
   \end{equation}
   Once \eqref{claimGT1} is available, we can conclude the proof as follows.
   From Bennett's inequality, there exists $c>0$ such that if $\varepsilon_i$
   are i.i.d.\  Bernoulli random variables with parameter $q$, $0 < q < 1/2$,
   \[
   \dP \PAR{  \sum_{i=1} ^ n \veps_i \leq  \frac{ n q } { 4} }  \leq \dP \PAR{  \sum_{i=1} ^ n \dE \veps_i - \veps_i   \geq  \frac{ 3 n q } {  4} } \leq e^ { - c n q } . 
   \]
   From \eqref{claimGT1}, we can take $\varepsilon_i=
   \IND_{\{S_i>\frac{c_0\rho}{\sqrt{n}}\}}$, with $1/2\geq q\geq
   c_0n\si^2\de$. Then, using $n\si^2\geq 1$, one has that $\| X x - u \|_2
   \leq c\rho\sqrt\de$ implies $\sum_{i=1} ^ n \veps_i \leq \frac{ n q } {
     4}$, for some $c>0$. It follows that for some new $c>0$:
   \[
   \dP \PAR { \| X x - u \|_2 \leq  c       \rho \sqrt{  \de  } }   \leq e^ { - c n^2 \si^2  \delta }. 
   \]
   This ends the proof.
\end{proof} 

Corollary \ref{cor2:halasz} implies a probabilistic bound on the distance of
$Xx$ to a vector space.
\begin{corollary}\label{cor3:halasz}
  There exists $c_1 > 0$ such that if $x \in \INCOMP ( \delta, \rho)$, with
  $\de,\rho\in(0,1)$, $s\geq 1$, and $H$ is a deterministic vector space such
  that
  \begin{equation}\label{dimh}
    \dim(H) \leq   
    \frac{  c_1\, n ( 1 \wedge \si^2  \delta n ) } { \log ( \frac{ s} {\rho^2 \delta } )},
  \end{equation} 
  then
  \[
  \dP \PAR{  \DIST( X x, H )  \leq  c_1 \, \rho   \sqrt{\de}  \, ; \, s_1 ( X ) \leq s  } \leq e^ { - c_1\, n ( 1 \wedge \si^2  \delta n )  }.
  \]
\end{corollary}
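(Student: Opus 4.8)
The plan is to deduce Corollary~\ref{cor3:halasz} from the fixed-vector estimate of Corollary~\ref{cor2:halasz} by a covering (net) argument on the subspace $H$. We may assume $\INCOMP(\de,\rho)\neq\varnothing$, since otherwise there is nothing to prove, and I would first record the elementary fact that this forces $\rho^2\de$ to be bounded away from $1$. Indeed, for any $x\in\dS^{n-1}$ the element of $\SPARSE(\de)$ nearest to $x$ is obtained by keeping the $\FLOOR{\de n}$ largest-modulus coordinates of $x$ and zeroing the rest, so
\[
\DIST(x,\SPARSE(\de))^2=\|x\|_2^2-\bigl(\text{sum of the }\FLOOR{\de n}\text{ largest }|x_i|^2\bigr)\leq 1-\de+\tfrac1n,
\]
because the largest $\FLOOR{\de n}$ among $n$ nonnegative numbers summing to $1$ carry at least the fraction $\FLOOR{\de n}/n\geq\de-\tfrac1n$. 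Hence $\INCOMP(\de,\rho)\neq\varnothing$ requires $\rho^2\leq 1-\de+\tfrac1n$, so for $n\geq2$ the quantity $\rho^2\de$ is bounded away from $1$ by an absolute amount, and consequently $\log(s/(\rho^2\de))\geq c_2$ for some absolute constant $c_2>0$. This lower bound is what lets me absorb the constants below.

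Next I would set up the net. On the event $\{s_1(X)\leq s\}$ one has $\|Xx\|_2\leq s_1(X)\|x\|_2=s_1(X)\leq s$ since $x\in\dS^{n-1}$, so whenever in addition $\DIST(Xx,H)\leq c_1\rho\sqrt\de\leq1\leq s$, the point of $H$ closest to $Xx$ lies in the ball $B=\{u\in H:\|u\|_2\leq 2s\}$. Let $\mathcal N\subset B$ be a $\tau$-net of $B$ with $\tau=c_1\rho\sqrt\de$; the standard volumetric bound gives $\ABS{\mathcal N}\leq\PAR{1+4s/\tau}^{2\dim(H)}\leq\PAR{5s/(c_1\rho\sqrt\de)}^{2\dim(H)}$, where $2\dim(H)$ is the real dimension of $H$ (the harmless factor $2$ will be absorbed into $c_1$). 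On the event in question there is then $u\in\mathcal N$ with $\|Xx-u\|_2\leq 2c_1\rho\sqrt\de$, so choosing $c_1\leq c/2$, with $c$ the constant of Corollary~\ref{cor2:halasz}, gives $\|Xx-u\|_2\leq c\rho\sqrt\de$. A union bound over $\mathcal N$ together with Corollary~\ref{cor2:halasz} then yields
\[
\dP\PAR{\DIST(Xx,H)\leq c_1\rho\sqrt\de\,;\,s_1(X)\leq s}\leq\exp\PAR{2\dim(H)\log\tfrac{5s}{c_1\rho\sqrt\de}-cn(1\wedge\si^2\de n)}.
\]
The case $\dim(H)=0$ is immediate from Corollary~\ref{cor2:halasz} applied with $u=0$.

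Finally I would balance the two terms in the exponent. Since $\rho,\de\in(0,1)$ we have $\rho\sqrt\de\geq\rho^2\de$, hence $\log(s/(\rho\sqrt\de))\leq\log(s/(\rho^2\de))$, and so, using $\log(s/(\rho^2\de))\geq c_2$ from the first paragraph,
\[
\log\tfrac{5s}{c_1\rho\sqrt\de}=\log(5/c_1)+\log(s/(\rho\sqrt\de))\leq\PAR{1+\tfrac{\log(5/c_1)}{c_2}}\log\tfrac{s}{\rho^2\de}.
\]
Plugging in the hypothesis \eqref{dimh}, $2\dim(H)\log\tfrac{5s}{c_1\rho\sqrt\de}\leq 2c_1\PAR{1+\tfrac{\log(5/c_1)}{c_2}}n(1\wedge\si^2\de n)$. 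Because $c_1\log(1/c_1)\to0$ as $c_1\downarrow0$, the prefactor $2c_1(1+\log(5/c_1)/c_2)$ tends to $0$, so one may fix $c_1>0$ small enough (and $\leq c/2$) that it is at most $c/2$; the exponent is then at most $-\tfrac c2 n(1\wedge\si^2\de n)\leq-c_1 n(1\wedge\si^2\de n)$, which is the asserted bound. The step I expect to be the main (though routine) obstacle is precisely this balancing: it only works because the logarithmic denominator in \eqref{dimh} is bounded below, which is why the reduction to ``$\rho^2\de$ bounded away from $1$'' carried out in the first paragraph is essential.
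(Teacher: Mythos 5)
Your proof is correct and follows the same strategy as the paper's: restrict to the ball $B_H(2s)$ in $H$ (using $s_1(X)\leq s$), cover it by a net of mesh proportional to $\rho\sqrt{\de}$, apply Corollary~\ref{cor2:halasz} to each net point via a union bound, and then absorb the entropy term using \eqref{dimh}. The one place you are more careful than the paper is the first paragraph: the paper simply says ``the conclusion follows by taking $c_1$ sufficiently small,'' which implicitly requires the denominator $\log(s/(\rho^2\de))$ in \eqref{dimh} to be bounded below by a positive constant, and your observation that $\INCOMP(\de,\rho)\neq\varnothing$ forces $\rho^2\leq 1-\de+\tfrac1n$ (hence $\rho^2\de\leq\tfrac14+\tfrac1n$) is exactly what justifies that step; this is a small but genuine gap-filling improvement over the paper's exposition. (Your use of real dimension $2\dim(H)$ for the net cardinality, versus the paper's $\dim(H)$, is a bookkeeping discrepancy absorbed into the constants.)
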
 
\begin{proof}
  By definition, 
  \[
  \DIST( X x, H ) = \min_{ u \in H } \| X x - u  \|_2. 
  \]
  On the event $s_1(X) \leq s$, if $\| u \|_2 \geq 2s $ then $\| X x - u \|_2
  \geq s \geq 1\geq \rho \sqrt{\de}$. Hence, we may restrict our attention to
  \[
  \min_{ u \in B_H(s)  } \| X x - u  \|_2, 
  \]
  where $B_H (s) = \{ u \in H ; \| u \|_2 \leq 2 s\} $. Now, if $\veps < 1$
  and $k = \dim ( H)$, there exists an $\veps$-net $N$ of $B_H (s)$ of
  cardinality $( 5 s / \veps )^{ k}$,
  with
  \[
  \min_{ u \in B_H(s)  } \| X x - u  \|_2 \geq  \min_{ u \in N } \| X x - u  \|_2 - \veps.
  \]
  Let $c>0$ and take $\veps = \frac{c \rho \sqrt{\de}}{2}$. From the union
  bound,
  \[
  \dP \PAR{ \DIST( X x, H ) %
    \leq \frac{c \rho \sqrt{\de}}{2} \, ; \, s_1 ( X ) \leq s } %
  \leq \PAR{ \frac{ 10 s }{ c \rho \sqrt{\delta}} }^{ k} \max_{u \in \dC^n}
  \dP \PAR{ \| X x - u \|_2 \leq c \rho \sqrt{\de } }.
  \]
  Then if $c$ is chosen as in corollary \ref{cor2:halasz}, we find
  \[
  \dP\PAR{\DIST(Xx,H)\leq\frac{c\rho\sqrt{\de} }{2}\,;\,s_1(X)\leq s} %
  \leq e^{k\log\PAR{\frac{10s}{c\rho\sqrt{\delta}}} -cn(1\wedge\si^2\delta n)}.
  \]
  In particular if $k=\dim(H)$ satisfies \eqref{dimh}, then the conclusion
  follows by taking $c_1$ sufficiently small.
\end{proof}




\section*{Acknowledgments}
The idea of studying this model emerged from animated discussions with Laurent
Miclo in Marne-la-Vall\'ee. We are grateful to Philippe Biane for helpful
suggestions regarding the algebraic literature. This work was supported by the
French ANR 2011 BS01 007 01 GeMeCoD, the GDRE GREFI-MEFI CNRS-INdAM, and the
European Research Council through the ``Advanced Grant'' PTRELSS 228032.


\providecommand{\bysame}{\leavevmode\hbox to3em{\hrulefill}\thinspace}
\providecommand{\MR}{\relax\ifhmode\unskip\space\fi MR }
\providecommand{\MRhref}[2]{%
  \href{http://www.ams.org/mathscinet-getitem?mr=#1}{#2}
}
\providecommand{\href}[2]{#2}

\end{document}